\newcommand{\bb}{\textbf}
\newcommand{\uu}{\underline}
\newcommand{\ol}{\overline}
\newcommand{\mc}{\mathcal}
\newcommand{\wh}{\widehat}
\newcommand{\wt}{\widetilde}
\newcommand{\mf}{\mathfrak}
\newcommand{\HH}{\mathbb{H}}
\newcommand{\ZZ}{\mathbb{Z}}
\newcommand{\RR}{\mathbb{R}}
\newcommand{\QQ}{\mathbb{Q}}
\newcommand{\FF}{\mathbb{F}}
\DeclareMathOperator{\Ind}{Ind}
\DeclareMathOperator{\Divv}{Div}
\DeclareMathOperator{\coker}{coker}
\DeclareMathOperator{\im}{im}
\DeclareMathOperator{\id}{id}
\DeclareMathOperator{\Span}{Span}
\DeclareMathOperator{\res}{res}
\DeclareMathOperator{\Sl}{Sl}
\DeclareMathOperator{\GCD}{GCD}
\DeclareMathOperator{\ord}{ord}
\DeclareMathOperator{\Spec}{Spec}
\DeclareMathOperator{\cha}{char}
\DeclareMathOperator{\Aut}{Aut}
\DeclareMathOperator{\mmod}{-mod}
\newtheorem{Theorem}{Theorem}[section]
\newtheorem*{mainthm}{Main Theorem}
\newtheorem{Lemma}[Theorem]{Lemma}
\newtheorem{Corollary}[Theorem]{Corollary}
\newtheorem{Proposition}[Theorem]{Proposition}
\newtheorem{Example}[Theorem]{Example}
\newtheorem{Question}[Theorem]{Question}
\newtheorem{Remark}[Theorem]{Remark}
\numberwithin{equation}{section}
\begin{document}

\title{Equivariant splitting of the Hodge--de Rham exact sequence}
\author[J. Garnek]{J\k{e}drzej Garnek}
\address{Graduate School, Adam Mickiewicz University\\
	Faculty of Mathematics and Computer Science\\
Umultowska 87, 61-614 Poznan, Poland}
\email{jgarnek@amu.edu.pl}
\subjclass[2010]{Primary 14F40, Secondary 14G17, 14H37}
\keywords{}
\date{}

\begin{abstract}
	Let $X$ be an algebraic curve with an action of a finite group~$G$ over a field $k$.
	We show that if the Hodge-de Rham short exact sequence of $X$ splits $G$-equivariantly
	then the action of $G$ on $X$ is weakly ramified. In particular, this generalizes the result of
	K\"{o}ck and Tait for hyperelliptic curves.
	We discuss also converse statements and tie this problem to lifting
	coverings of curves to the ring of Witt vectors of length $2$. 
\end{abstract}

\maketitle
\bibliographystyle{plain}

\section{Introduction} \label{sec:intro}

Let $X$ be a smooth proper algebraic variety over a field $k$. Recall that its de Rham cohomology
may be computed in terms of Hodge cohomology via the spectral sequence
\begin{equation} \label{eqn:hodge_de_rham_ss}
	E^{ij}_1 = H^j(X, \Omega^i_{X/k}) \Rightarrow H^{i+j}_{dR}(X/k).
\end{equation}
Suppose that the spectral sequence~\eqref{eqn:hodge_de_rham_ss} degenerates at the first page.
This is automatic if $\cha k = 0$. For a field of positive characteristic,
this happens for instance if $X$ is a smooth projective curve or an abelian variety,
or (by a celebrated result of Deligne and Illusie from \cite{DeligneIllusie_relevements})
if $\dim X > \cha k$ and $X$ lifts to $W_2(k)$, the ring of Witt vectors of length~$2$.
Under this assumption we obtain the following exact sequence:
\begin{equation} \label{eqn:hodge_de_rham_se}
	0 \to H^0(X, \Omega_{X/k}) \to H^1_{dR}(X/k) \to H^1(X, \mc O_X) \to 0.
\end{equation}
If $X$ is equipped with an action of a finite group $G$,
the terms of the sequence~\eqref{eqn:hodge_de_rham_se} become $k[G]$-modules.
In case when ${\cha k \nmid \# G}$, Maschke theorem allows one to conclude that the sequence~\eqref{eqn:hodge_de_rham_se}
splits equivariantly. However, this might not be true in case when $\cha k = p > 0$
and $p | \# G$, as shown in~\cite{KockTait2018}. The goal of this article is to show that
for curves the sequence~\eqref{eqn:hodge_de_rham_se} \emph{usually} does not split equivariantly.

Let $X$ be a curve over an algebraically closed field of characteristic $p > 0$
with an action of a finite group $G$. For $P \in X$, denote by $G_{P, n}$
the $n$-th ramification group of $G$ at $P$. Let also:
\begin{equation} \label{eqn:definition_of_nP}
	n_P := \max\{ n : G_{P, n} \neq 0 \}.
\end{equation}
Following \cite{Kock2004}, we say that the action of $G$ on $X$ is \bb{weakly ramified}
if $n_P \in \{ 0, 1 \}$ for every $P \in X$.
\begin{mainthm}
	Suppose that $X$ is a smooth projective curve over an algebraically closed field $k$ of
	a finite characteristic $p > 2$ with an action of a finite group~$G$. If 
	\begin{equation} \label{eqn:intro_decomposition}
	 H^1_{dR}(X/k) \cong H^0(X, \Omega_{X/k}) \oplus H^1(X, \mc O_X)
	\end{equation}
	as $k[G]$-modules then the action of $G$ on $X$ is weakly ramified.
\end{mainthm}
\noindent The example below is a direct generalization of results proven in~\cite{KockTait2018}.
\begin{Example} \label{ex:hyperelliptic1}
	Suppose that $k$ is an algebraically closed field of characteristic~$p$.
	Let $X/k$ be the smooth projective curve with the affine part given by the equation:
	\[ y^m = f(z^p - z), \]
	where $f$ is a separable polynomial and $p \nmid m$. Denote by $\mc P$ the set of points of~$X$ at infinity. One checks that $\# \mc P = \delta := \GCD(m, \deg f)$ (cf. \cite[Section~1]{Towse1996}).
	The group $G = \ZZ/p$ acts on $X$ via the automorphism $\varphi(z, y) = (z+1, y)$.
	In this case
	\begin{equation} \label{eqn:nP}
		n_P =
		\begin{cases}
			m/\delta, & \textrm{ if $P \in \mc P$,} \\
			0,                 & \textrm{ otherwise.}        
		\end{cases}
	\end{equation}
	(cf. Example~\ref{ex:hyperelliptic2}). Thus if the exact sequence~\eqref{eqn:hodge_de_rham_se}
	splits $G$-equivariantly, then by Main Theorem either $p = 2$, or $m|\deg f$.
\end{Example}
The main idea of the proof of Main Theorem is to compare $H^1_{dR}(X/k)^G$
and $H^1_{dR}(Y/k)$, where $Y := X/G$. The discrepancy between those groups
is measured by the sheafified version of group cohomology, introduced by Grothendieck in~\cite{GrothendieckQuelques}. This allows us to compute the 'defect'
\begin{eqnarray}
	\delta(X, G) &:=& \dim_k H^0(X, \Omega_{X/k})^G + \dim_k H^1(X, \mc O_X)^G 
	\label{eqn:defect_X} \\
	&-& \dim_k H^1_{dR}(X/k)^G \nonumber 
\end{eqnarray}
in terms of some local terms connected to Galois cohomology (cf. Proposition~\ref{prop:main_lemma}).
We compute these local terms in case of Artin-Schreier coverings, which
leads to the following theorem.
\begin{Theorem} \label{thm:invariants_AS}
	Suppose that $X$ is a smooth projective curve over an algebraically closed field $k$ of
	characteristic $p > 0$ with an action of the group~$G = \ZZ/p$. 
	Then:
	\begin{eqnarray*}
		\delta(X, G) = \sum_{P \in X} \left(\left[ \frac{(p-1) \cdot (n_P + 1)}{p} \right] - 1 - \left[ \frac{n_P - 1}{p} \right] \right).
	\end{eqnarray*}
\end{Theorem}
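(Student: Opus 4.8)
The plan is to feed the Artin--Schreier situation into Proposition~\ref{prop:main_lemma}, which already rewrites $\delta(X,G)$ as a sum over the points of $X$ of local contributions built from the Galois cohomology of the completed local data of the de Rham complex. Since $G=\ZZ/p$, at a ramified point $P$ the decomposition group is all of $G$ and the extension $\wh{\mc O}_{Y,\pi(P)}\hookrightarrow\wh{\mc O}_{X,P}$ is a totally ramified $\ZZ/p$-extension with a single ramification break equal to $n_P$; at the remaining points the action is free and the local term vanishes. Thus the entire computation reduces to evaluating one local term attached to a single totally ramified $\ZZ/p$-extension of $k[[t]]$ with break $n_P$, and the theorem then follows by summing over $P$.

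First I would fix the local model. Writing $\sigma$ for a generator of $G$ and $t$ for a uniformizer at $P$, the break condition reads $v_t(\sigma(t)-t)=n_P+1$, so $\varepsilon:=\sigma-1$ raises the $t$-adic valuation by $n_P$ to leading order, and $\varepsilon^p=0$ because $(\sigma-1)^p=\sigma^p-1=0$ in characteristic $p$. The numerology of the formula is controlled by the different exponent $d_P=(n_P+1)(p-1)$, whose appearance is explained by the trace: the additive norm $N=\sum_i\sigma^i$ coincides with $\mathrm{Tr}$, and the standard different computation gives $\mathrm{Tr}(\mf m_X^{\,i})=\mf m_Y^{\lfloor(i+d_P)/p\rfloor}$, where $\mf m_X,\mf m_Y$ are the maximal ideals of $\wh{\mc O}_{X,P},\wh{\mc O}_{Y,\pi(P)}$. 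In particular $\mathrm{Tr}(\wh{\mc O}_{X,P})=\mf m_Y^{\lfloor d_P/p\rfloor}$, whence $\dim_k\wh H^0(G,\wh{\mc O}_{X,P})=\lfloor d_P/p\rfloor=\lfloor(p-1)(n_P+1)/p\rfloor$, which is exactly the first floor term in the statement.

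Next I would compute the remaining, de Rham, part of the local term. Here one filters $\wh{\mc O}_{X,P}$ and $\wh\Omega_{X,P}$ by powers of $\mf m_X$ and tracks the two operators $\varepsilon$ and $d$ at once. For a cyclic $p$-group every finite $k[G]$-module is a sum of Jordan blocks $J_r=k[\varepsilon]/(\varepsilon^r)$, $1\le r\le p$, and both $\wh H^0$ and $H^1$ count precisely the non-free blocks (those with $r\le p-1$); so the task becomes counting non-free blocks in the relevant truncations. The subtlety is that in characteristic $p$ the differential $d$ kills $p$-th powers while $\varepsilon$ has non-uniform valuation jumps exactly at multiples of $p$, and the interaction of these two phenomena produces the correction $1+\lfloor(n_P-1)/p\rfloor$: the summand $1$ records the constants, respectively the invariant differential, surviving in $\ker d$ and $\coker d$, while the floor $\lfloor(n_P-1)/p\rfloor$ records the extra non-free blocks created along the filtration. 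Subtracting this correction from the first term leaves the local contribution $\left[\tfrac{(p-1)(n_P+1)}{p}\right]-1-\left[\tfrac{n_P-1}{p}\right]$.

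I expect the genuine obstacle to be this last count: the bookkeeping of how $d$ meshes with the ramification filtration in characteristic $p$. The trace/different input giving the first floor is classical and clean, whereas what requires care is showing that the de Rham part contributes precisely $1+\lfloor(n_P-1)/p\rfloor$ non-free Jordan blocks, since one cannot read the module structure off the associated graded (the $G$-action on $\mathrm{gr}$ is trivial) and must instead argue with explicit representatives $t^{\,j}\,dt$ and their classes modulo $d(\wh{\mc O}_{X,P})$. Once this local count is pinned down, summing over all $P$---with the immediate check that an unramified point ($n_P=0$) contributes $0-1-\lfloor-1/p\rfloor=0$---yields the asserted formula for $\delta(X,G)$.
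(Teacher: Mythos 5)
Your global-to-local reduction is exactly the paper's: both you and the paper feed the Artin--Schreier situation into Proposition~\ref{prop:main_lemma}, note that unramified points contribute nothing, and reduce to a single totally ramified $\ZZ/p$-extension of $k[[t]]$ with break $n_P$. The gap is in the local computation, and you flag it yourself: the local term in Proposition~\ref{prop:main_lemma} is \emph{not} $\dim_k H^1(G,(\pi_*\mc O_X)_Q)$ but the dimension of the image of $d_* : H^1(G,(\pi_*\mc O_X)_Q)\to H^1(G,(\pi_*\Omega_{X/k})_Q)$, so your trace/different computation of the ``first floor term'' only gives an upper bound. The entire content of the theorem is the determination of $\ker d_*$, and your proposal asserts (``the interaction of these two phenomena produces the correction $1+\lfloor(n_P-1)/p\rfloor$'') rather than proves it, before conceding that this count is ``the genuine obstacle.'' That obstacle is precisely what the paper's Proposition~\ref{prop:ker_na_kohomologii} and Lemma~\ref{lem:H^1(I)} supply: an explicit model $H^1(G,t^aB)\cong\coker\bigl(L^G\to (L/t^aB)^G\bigr)$ with basis $\{[t^i]: a-n\le i\le a-1,\ p\nmid i\}$ and the vanishing $[t^i]=0$ for $p\mid i$; the $G$-invariance of $dt/t^{n+1}$, which identifies $B\,dt\cong t^{n+1}B$ as $B[G]$-modules and lets one apply the same lemma with $a=n+1$; and then the direct count of which basis classes $d[t^i]=[i\,t^{i-1}dt]$ survive, namely those with $p\nmid i$ and $i+n\not\equiv 0 \pmod p$. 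Without this (or an equivalent) computation the theorem is not proved.

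Two subsidiary points would also need repair if you carried out your plan. First, the trace argument computes $\widehat H^0(G,\wh{\mc O}_{X,P})$, which for cyclic $G$ is $H^2$, not the $H^1$ appearing in Proposition~\ref{prop:main_lemma}; in this situation the two dimensions coincide, but that equality is itself something to prove (the paper sidesteps it by computing $H^1$ directly from the long exact sequence of $0\to B\to L\to L/B\to 0$ and the normal basis theorem). Second, your Jordan-block bookkeeping (``$\widehat H^0$ and $H^1$ count the non-free blocks'') is valid for finite-dimensional $k[G]$-modules, but $k[[t]]$ and $k[[t]]\,dt$ are not finite-dimensional, so one either needs a completion/limit argument to justify the block decomposition or, as the paper does, a cokernel description that avoids decomposing the module at all.
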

\noindent Theorem \ref{thm:invariants_AS} shows that if the group action of $G = \ZZ/p$ on a curve
is not weakly ramified and $p > 2$ then $\delta(X, G) > 0$. This immediately implies Main Theorem
for $G \cong \ZZ/p$. The general case may be easily derived from
this special one.\\ 

The natural question arises: {\sl to which extent is the converse of Main Theorem true?} We give some partial answers. In characteristic $2$, we were able to produce a counterexample
(cf. Subsection~\ref{subsec:counterexample}). We provide also some positive results. In particular, we prove the following theorem.
\begin{Theorem} \label{thm:weakly_then_invariants}
    If the action of $G$ on a smooth projective curve $X$ over an algebraically closed field $k$ is weakly ramified then
    the sequence
    \[ 0 \to H^0(X, \Omega_{X/k})^G \to H^1_{dR}(X/k)^G \to H^1(X, \mc O_X)^G \to 0 \]
    is exact also on the right.
\end{Theorem}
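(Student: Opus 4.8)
The plan is to reformulate the assertion as the vanishing of the defect $\delta(X,G)$ and then to evaluate that defect locally. Since the functor $(-)^G$ of $G$-invariants is left exact, applying it to the exact sequence~\eqref{eqn:hodge_de_rham_se} automatically yields exactness everywhere except possibly at the right-hand term; the cokernel of the map $H^1_{dR}(X/k)^G \to H^1(X, \mc O_X)^G$ is precisely the obstruction to surjectivity, and a short dimension count shows that its dimension equals $\delta(X,G)$ as defined in~\eqref{eqn:defect_X}. In particular $\delta(X,G) \geq 0$ always, and the theorem is equivalent to the statement that weak ramification forces $\delta(X,G) = 0$. When $\cha k \nmid \# G$ this is immediate from Maschke's theorem, so I may assume that $\cha k = p$ divides $\# G$.

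Next I would invoke Proposition~\ref{prop:main_lemma} to write $\delta(X,G)$ as a sum of local contributions indexed by the branch points of $X \to Y = X/G$, each depending only on the action of the decomposition group on the complete local ring at the point. Because $k$ is algebraically closed, decomposition coincides with inertia, so the local term at $P$ is governed entirely by the inertia group $G_{P,0}$, which is an extension of a cyclic group of order prime to $p$ by the wild inertia $G_{P,1}$. The prime-to-$p$ quotient contributes nothing to the relevant group cohomology by a Maschke-type argument, so each local term is controlled by the $p$-group $G_{P,1}$ alone.

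Here the weak ramification hypothesis enters decisively: $n_P \leq 1$ means $G_{P,2} = 0$, and since $G_{P,1}/G_{P,2}$ embeds into the additive group of the residue field while $[G_{P,1},G_{P,1}] \subseteq G_{P,2}$, the wild inertia $G_{P,1}$ is forced to be elementary abelian. I would then reduce the elementary-abelian local computation to the cyclic case $\ZZ/p$ by d\'evissage along a chain of subgroups with successive quotients $\ZZ/p$, using that weak ramification is inherited by the subquotient covers together with the additivity of the local terms along such a tower. The base case is exactly the content of Theorem~\ref{thm:invariants_AS}: for $G = \ZZ/p$ the summand equals $[(p-1)(n_P+1)/p] - 1 - [(n_P-1)/p]$, which one checks to vanish for $n_P \in \{0,1\}$ in every characteristic, including $p = 2$ (in contrast to the Main Theorem). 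Summing the vanishing local contributions gives $\delta(X,G) = 0$.

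The main obstacle I anticipate is precisely the passage from the cyclic group, where Theorem~\ref{thm:invariants_AS} supplies the answer outright, to an arbitrary finite group. Two points need care: establishing that the contributions of Proposition~\ref{prop:main_lemma} are genuinely local and additive along a tower of $\ZZ/p$-covers, and verifying that weak ramification descends to each cyclic subquotient. This last step is delicate, because subgroups respect the lower-numbering ramification filtration whereas quotients respect the upper numbering, so the comparison must be routed through the Herbrand function. Once these local bookkeeping issues are settled, the vanishing of $\delta(X,G)$, and hence the desired right-exactness, follows.
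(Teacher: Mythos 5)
Your reduction of the statement to $\delta(X,G)=0$ and the localization via Proposition~\ref{prop:main_lemma} follow the paper exactly, and your observation that everything is controlled by the wild inertia $G_{P,1}$ (via Lemma~\ref{lem:group_cohomology_higher_ramification}) is also how the paper sets things up. The genuine gap is the step you yourself flag: the d\'evissage from an elementary abelian $G_{P,1}$ to the cyclic case rests on an unproven claim that the local terms of Proposition~\ref{prop:main_lemma} are \emph{additive along a tower} $X \to X/H \to X/G_{P,1}$. Nothing in your sketch supplies this, and the natural attempt via inflation--restriction fails for two reasons. First, the term that would have to play the role of the quotient cover's local contribution is $H^1\bigl(G/H, ((\pi_*\Omega_{X/k})_Q)^H\bigr)$, but $(\pi_*\Omega_{X/k})^H$ is \emph{not} the pushforward of $\Omega_{(X/H)/k}$: by Lemma~\ref{lem:invertible_sheaf_invariants} and Corollary~\ref{cor:pi_G_omega} it is the twist $\Omega_{(X/H)/k}\otimes\mc O(R'_H)$, which by Lemma~\ref{lem:vanishing_of_r_prim} is strictly larger exactly at the wildly ramified points you care about; so the local term of the intermediate cover simply does not appear in the five-term exact sequence. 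Second, even granting the inductive vanishing for the subgroup $H$, inflation--restriction only shows that $\im\bigl(d_* : H^1(G,(\pi_*\mc O_X)_Q)\to H^1(G,(\pi_*\Omega_{X/k})_Q)\bigr)$ is contained in the image of inflation from $G/H$; a class $d_*\alpha$ in that image need not itself be the image under $d_*$ of anything inflated, so you cannot conclude it vanishes. The induction therefore does not close.

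What you are implicitly trying to re-prove by this d\'evissage is a substantive theorem, not a formality: the paper's actual proof invokes K\"ock's result (\cite[Theorem 1.1]{Kock2004}) that $H^1(G_P,\mf m_{X,P})=0$ whenever the action is weakly ramified --- for \emph{arbitrary} inertia groups, with no reduction to the cyclic case --- and then finishes in two lines: decompose $H^1(G,(\pi_*\mc O_X)_Q) \cong H^1(G_P,k)\oplus H^1(G_P,\mf m_{X,P})$, note the second summand dies by K\"ock, and note the first maps to zero because $d$ kills constants. Your verification that the $\ZZ/p$ local term $\left[\frac{(p-1)(n_P+1)}{p}\right]-1-\left[\frac{n_P-1}{p}\right]$ vanishes for $n_P\in\{0,1\}$ in every characteristic (including $p=2$) is correct, and weak ramification does pass to subgroups (lower numbering) and, via Herbrand, to quotients; but absent a proof of the additivity claim, your argument establishes the theorem only when each $G_{P,1}$ is cyclic of order $p$, not in general.
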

\noindent To derive Theorem~\ref{thm:weakly_then_invariants} we use the method of
proof of Main Theorem and a~result of K\"{o}ck from~\cite{Kock2004}.\\

We were also able to show that the equivariant decomposition~\eqref{eqn:intro_decomposition} holds
under some additional assumptions.
\begin{Theorem} \label{thm:converse_to_main_thm}
 Keep the above notation. If any of the following conditions is satisfied:
 \begin{enumerate}[(1)]
	\item the action of $G$ on $X$ is weakly ramified and the $p$-Sylow subgroup of $G$ is cyclic,
	
	\item the action of $G$ on $X$ lifts to $W_2(k)$,
	
	\item $X$ is ordinary,
\end{enumerate}
 then we have an isomorphism~\eqref{eqn:intro_decomposition} of~$\FF_p[G]$-modules.
\end{Theorem}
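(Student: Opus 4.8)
The plan is to treat the three hypotheses separately, taking the lifting condition~(2) as the fundamental case, reducing~(1) to it, and handling~(3) by a direct filtration-theoretic argument. A guiding observation is that the decomposition is asserted only as an isomorphism of $\FF_p[G]$-modules, and this is exactly the strength one should expect: every natural splitting of~\eqref{eqn:hodge_de_rham_se} available in characteristic $p$ is manufactured either from a $W_2(k)$-lift or from the Cartier operator, and both mechanisms introduce a Frobenius twist. That twist is $p$-semilinear over $k$ and hence not $k$-linear, but it is $\FF_p$-linear and commutes with the $G$-action, which is precisely what produces an $\FF_p[G]$-isomorphism.

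For condition~(2) I would invoke the $G$-equivariant form of the Deligne--Illusie decomposition~\cite{DeligneIllusie_relevements}. A $G$-equivariant lift $\mc X/W_2(k)$ of $X$ yields a $G$-equivariant splitting, in the derived category of the Frobenius twist $X'$, of the truncated de Rham complex; the hypothesis $\dim X = 1 < p$ is satisfied for every prime. Passing to hypercohomology gives a $G$-equivariant isomorphism $H^1_{dR}(X/k) \cong H^0(X', \Omega_{X'/k}) \oplus H^1(X', \mc O_{X'})$ of $k[G]$-modules. Since $k$ is algebraically closed, the absolute Frobenius identifies $X'$ with $X$ compatibly with the $G$-action, though only $\FF_p$-linearly; transporting the decomposition along this identification produces~\eqref{eqn:intro_decomposition} of $\FF_p[G]$-modules. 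The only point requiring care is the $G$-equivariance of each arrow, and this follows from functoriality once the lift has been chosen $G$-equivariantly.

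For condition~(3) I would use Mazur's theorem that on an ordinary variety the Hodge filtration and the conjugate filtration on de Rham cohomology are complementary. Concretely, $H^1_{dR}(X/k)$ carries the Hodge filtration with subobject $H^0(X, \Omega_{X/k})$ and the conjugate filtration, coming from the Cartier isomorphism, with subobject $\mathrm{Fil}^{\mathrm{conj}}_0 \cong H^1(X', \mc O_{X'})$. Ordinarity of $X$ is equivalent to bijectivity of the Hasse--Witt/Cartier operator, and this forces $H^1_{dR}(X/k) = H^0(X, \Omega_{X/k}) \oplus \mathrm{Fil}^{\mathrm{conj}}_0$. Both filtrations are canonical, hence $G$-stable, so the splitting is $G$-equivariant; untwisting the Frobenius exactly as in case~(2) again yields~\eqref{eqn:intro_decomposition} over $\FF_p[G]$.

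The remaining and most delicate case is~(1). Here I would first reduce to the $p$-Sylow subgroup $P$: since $[G:P]$ is prime to $p$ and $\cha k = p$, the restriction map on $\Ext^1_{\FF_p[-]}$ is split injective, so the sequence~\eqref{eqn:hodge_de_rham_se} splits over $\FF_p[G]$ as soon as it splits over $\FF_p[P]$. This leaves the case where $G = P$ is a cyclic $p$-group acting weakly ramifiedly, which I would settle by exhibiting a $P$-equivariant lift to $W_2(k)$ and invoking case~(2). By the local-global principle for lifting covers it suffices to solve the local lifting problem at each ramification point; the decomposition groups are cyclic $p$-groups, and weak ramification forces the ramification filtration to break at its first jump, making the conductors minimal. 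I expect the verification that this minimal-conductor cyclic situation admits a $W_2(k)$-lift, building on K\"ock's analysis of weakly ramified actions~\cite{Kock2004}, and the assembly of the local lifts into a global $P$-equivariant lift, to be the main technical obstacle; once it is in place, condition~(2) supplies the decomposition.
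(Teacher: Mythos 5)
Your handling of conditions (2) and (3) is essentially sound and close to the paper's. For (2) you reproduce the paper's own argument (Theorem~\ref{thm:deligne_illusie}): the equivariant Deligne--Illusie isomorphism splits the conjugate sequence~\eqref{eqn:conjugate_hodge_de_rham_se}, and untwisting the Frobenius is $\FF_p$-linear and $G$-equivariant, which is exactly why the conclusion is only an isomorphism of $\FF_p[G]$-modules. For (3), your direct appeal to Mazur-type transversality of the Hodge and conjugate filtrations is a legitimate variant of the paper's Corollary~\ref{cor:ordinary}, which instead transports the problem to the Jacobian via the Abel--Jacobi map and quotes the splitting for ordinary abelian varieties; the content is the same, and $G$-equivariance comes from canonicity/functoriality in both versions.

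The gap is in (1). The reduction to the $p$-Sylow subgroup is fine (it is Lemma~\ref{lem:splitting_sylow} of the paper), but the statement you then need --- that a weakly ramified action of a cyclic $p$-group admits an equivariant lift to $W_2(k)$ --- is false for $p \ge 5$. Take $X = \PP^1_k$ and $G = \ZZ/p$ acting by $z \mapsto z+1$. This action is weakly ramified: the unique fixed point is $\infty$, where $\sigma(t) = t/(1+t)$ for the uniformizer $t = 1/z$, so $n_\infty = 1$. Since $H^1(\PP^1_k, T_{\PP^1_k}) = 0$, the only lift of $X$ to $W_2(k)$ is $\PP^1_{W_2(k)}$, so an equivariant lift would be an element of order $p$ in $\mathrm{PGL}_2(W_2(k))$ reducing to the class of $I + N$, where $N = \left(\begin{smallmatrix} 0 & 1 \\ 0 & 0 \end{smallmatrix}\right)$. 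After rescaling, such a lift has a representative $A = I + N + pM$, and in $\mathrm{GL}_2(W_2(k))$ one computes $A^p = I + pN + p\binom{p}{3}\, NMN$, because the terms $p \cdot pM$ and $p \cdot \frac{p(p-1)}{2}(NM + MN)$ die when $p^2 = 0$ and $p$ is odd. For $p \ge 5$ one has $p \mid \binom{p}{3}$, so $A^p = I + pN$ for every choice of $M$, and $I + pN$ is not scalar; hence no order-$p$ lift exists. (For $p = 3$, $\binom{3}{3} = 1$ and a suitable $M$, e.g. $A = \left(\begin{smallmatrix} 1 & 1 \\ -3 & 1 \end{smallmatrix}\right)$ with $A^3 = -8I \equiv I$, does give a lift, so no contradiction appears at small primes.) Thus ``weakly ramified $\Rightarrow$ lifts to $W_2(k)$'' fails; the true implication is the converse one, namely the paper's Corollary~\ref{cor:lifts_then_weakly}. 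This is consistent with the deformation-theoretic literature you invoke: weakly ramified $\ZZ/p$-actions do lift, but over the ramified ring $W(k)[\zeta_p]$ (Oort--Sekiguchi--Suwa, \cite{BertinMezard2000}), which produces no $W_2(k)$-points.

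Consequently (1) cannot be routed through (2); it needs a separate argument, and the paper's is purely representation-theoretic. Weak ramification gives exactness on the right of the sequence of $G$-invariants (Theorem~\ref{thm:weakly_then_invariants}, resting on K\"ock's vanishing $H^1(G_P, \mf m_{X,P}) = 0$ from \cite{Kock2004}), and then, for a group with cyclic $p$-Sylow subgroup, Lemma~\ref{lem:cyclic_sylow} upgrades exactness of invariants to a genuine $k[G]$-splitting using the Jordan-block classification of $k[\ZZ/p^n]$-modules together with Smith normal form. Note that this last step is where cyclicity of the Sylow subgroup actually enters; in your plan it played no role beyond making inertia groups cyclic, which is a further sign that the lifting route misses the intended mechanism.
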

\noindent Parts (1), (2), (3) of Theorem~\ref{thm:converse_to_main_thm} are proven in Lemma~\ref{lem:cyclic_sylow}, Theorem~\ref{thm:deligne_illusie} and Corollary~\ref{cor:ordinary} respectively. In fact, we prove more precise statements,
involving the conjugate spectral sequence. This allows to prove that the conditions (2) and (3) of Theorem~\ref{thm:converse_to_main_thm} imply that the action of $G$ on $X$ is weakly ramified.

In order to prove (1) we use a description of modular representations of cyclic groups.
(2) and (3) are easy corollaries of the equivariant version of results of Deligne and Illusie from~\cite{DeligneIllusie_relevements}. The connection of \cite{DeligneIllusie_relevements} with the splitting of the Hodge-de Rham exact sequence was observed by Piotr Achinger.\\

\bb{Notation.} Throughout the paper we will use the following
notation (unless stated otherwise):
\begin{itemize}
 \item $k$ is an algebraically closed field of a finite characteristic $p$.
 \item $G$ is a finite group.
 \item $X$ is a smooth projective curve equipped with an action of $G$.
 \item $Y := X/G$ is the quotient curve, which is of genus $g_Y$.
 \item $\pi : X \to Y$ is the canonical projection.
 \item $R = \sum_{P \in X} d_P \cdot (P) \in \Divv(X)$ is the ramification divisor of $\pi$.
 
 \item $R' := \left[\frac{\pi_* R}{\# G} \right] \in \Divv(Y)$,
 where for $\delta \in \Divv(Y) \otimes_{\ZZ} \QQ$, we denote by $[\delta]$ the integral part taken
 coefficient by coefficient.
 \item $k(X)$, $k(Y)$ are the function fields of $X$ and $Y$.
 \item $\ord_Q(f)$ denotes the order of vanishing of a function $f$ at a point $Q$.
 \item $\uu{A}_X$ denotes the constant sheaf on $X$ associated to a ring $A$.
 \item $X'$ is the Frobenius twist of $X$.
 \item $F : X \to X'$ is the relative Frobenius of $X/k$.
\end{itemize}
Fix now a (closed) point $P$ in $X$. Denote:
 \begin{itemize}
  \item $G_{P, i}$ -- the $i$th ramification group of $\pi$ at $P$, i.e.
	\begin{equation*}
		G_{P, i} := \{ g \in G : g(f) \equiv f \pmod{\mf m_{X, P}^{i + 1}}
		\quad \textrm{ for all } f \in \mc O_{X, P} \}.
	\end{equation*}
	Note in particular that (since $k$ is algebraically closed) the inertia
	group $G_{P, 0}$ coincides with the decomposition group at $P$, i.e.
	the stabilizer of $P$ in $G$. Moreover, one has:
 \[ d_P = \sum_{i \ge 0} (\# G_{P, i} - 1). \]
	\item $e_P$ -- the ramification index of $\pi$ at $P$, i.e.
	$e_P = \# G_{P, 0}$.
	
	\item $n_P$ is given by the formula~\eqref{eqn:definition_of_nP}.
 \end{itemize} 
Also, by abuse of notation, for $Q \in Y$ we write $e_Q := e_P$,
$d_Q := d_P$, $n_Q := n_P$ for any $P \in \pi^{-1}(Q)$. Note that these quantities don't
depend on the choice of $P$.\\

\bb{Outline of the paper.} Section~\ref{sec:group_cohomology}
presents some preliminaries on the group cohomology of sheaves. We focus on the sheaves coming from Galois coverings of a curve. We use this theory to express the 'defect' $\delta(X, G)$
as a sum of local terms coming from Galois cohomology of certain modules in Section~\ref{sec:main_lemma}. In Section~\ref{sec:local_computations} we compute these local terms for Artin-Schreier coverings, which allows us to prove of Main Theorem and Theorem~\ref{thm:invariants_AS}. In the final section we discuss the converse statements to Main Theorem and its relation to the problem of lifting curves with a given group action.
Also, we give an counterexample in characteristic $2$.
We include also Appendix, which allows to compute the dimensions of $H^0(X, \Omega_X)^G$,
$H^1(X, \mc O_X)^G$ and $H^1_{dR}(X/k)^G$.\\

\bb{Acknowledgements.}
The author wishes to express his gratitude to Wojciech Gajda and Piotr Achinger
for many stimulating conversations and their patience.
The author also thanks Bartosz Naskr\k{e}cki, who suggested elliptic curves in characteristic~$2$
as a rich source of examples. The author was supported by NCN research grant UMO-2017/27/N/ST1/00497 and by the doctoral scholarship of Adam Mickiewicz University. Part of the work was done during the author's stay during Simons Semester at IMPAN in Warsaw (November 2018), supported by the National Science Center grant 2017/26/D/ST1/00913, the grant 346300 for IMPAN from the Simons Foundation and the matching 2015-2019 Polish MNiSW fund. This paper is part of the author's PhD thesis.


\section{Review of group cohomology} \label{sec:group_cohomology}
Recall that our goal is to compare $H^1_{dR}(X/k)^G$ and $H^1_{dR}(Y/k)$, where $Y := X/G$. To this end, we need to work in the $G$-equivariant setting.
\subsection{Group cohomology of sheaves}
Let $A$ be any commutative ring and $G$ a finite group. We define the \bb{$i$-th group cohomology}, $H^i_A(G, -)$, as the $i$-th derived functor of the functor
\begin{equation*}
 (-)^G : A[G] \mmod \to A \mmod, \quad M \mapsto M^G := \{ m \in M : g \cdot m = m \}.
\end{equation*}
One checks that if $A \to B$ is a morphism of rings and
$M$ is a $B[G]$-module then $H^i_B(G, M)$ and $H^i_A(G, M)$ are
isomorphic $A$-modules for all $i \ge 0$ (cf. \cite[Lemma 0DVD]{stacks-project}).
In particular, $H^i_A(G, M)$ is isomorphic as a $\ZZ$-module
to the usual group cohomology ($H^i_{\ZZ}(G, M)$ in our notation).
Thus without ambiguity we will denote it by $H^i(G, M)$.
For a future use we note the following properties of group cohomology:
\begin{itemize}
 \item If $M = \Ind_H^G N$ is an induced module (which for finite groups
 is equivalent to being a coinduced module) then
 \begin{equation} \label{eqn:shapiro}
  H^i(G, M) \cong H^i(H, N),
 \end{equation}
 (this property is known as \bb{Shapiro lemma}, cf. \cite[Proposition VIII.2.1.]{Serre1979}).
 \item If $M$ is a $\FF_p[G]$-module and $G$ has a normal $p$-Sylow subgroup $P$ then: 
 \begin{equation} \label{eqn:sylow}
  H^i(G, M) \cong H^i(P, M)
 \end{equation}
 (for a proof observe that $H^i(G/P, N)$ is killed by multiplication by~$p$ for any $\FF_p[G]$-module $N$ and use \cite[Theorem IX.2.4.]{Serre1979} to obtain $H^i(G/P, N) = 0$ for $i \ge 1$. Then use Lyndon–Hochschild–Serre spectral sequence).
 
 \item Suppose that $A$ is a finitely generated algebra over a field $k$, which is a local ring with maximal ideal $\mf m$. If $M$ is a finitely generated $A$-module then
 \begin{equation} \label{eqn:completion}
	H^i(G, M) \cong H^i(G, \wh M_{\mf m}),
 \end{equation}
 where $\wh M_{\mf m}$ denotes the completion of $M$ with respect to $\mf m$
 (see e.g. proof of \cite[Lemme 3.3.1]{BertinMezard2000} for a brief justification).
\end{itemize}
The above theory extends to sheaves, as explained e.g. in \cite{GrothendieckQuelques}
and \cite{BertinMezard2000}. We briefly recall this theory.
Let $(Y, \mc O)$ be a ringed space and let $G$ be a finite group.
By an $\mc O[G]$-sheaf on $(Y, \mc O)$ we understand a
sheaf $\mc F$ equipped with an $\mc O$-linear action of $G$ on $\mc F(U)$ for every open
subset $U \subset Y$, compatible with respect to the restrictions. 
The $\mc O[G]$-sheaves form a category $\mc O[G] \mmod$, which is abelian and has enough injectives.
For any $\mc O[G]$-sheaf~$\mc F$ one may define a sheaf $\mc F^G$
by the formula
\begin{equation*}
	U \mapsto \mc F(U)^G := \{ f \in \mc F(U) : \forall_{g \in G} \quad g \cdot f = f \}.
\end{equation*}
We denote the $i$-th derived functor of
\[ (-)^G : \mc O[G] \mmod \to \mc O \mmod \]
by $\mc H^i_{(Y, \mc O)}(G, -)$. Similarly as in the case of
modules, one may neglect the dependence on the sheaf $\mc O$ and
write simply $\mc H^i(G, M)$. If $\mc F = \wt M$ is a quasicoherent $\mc O[G]$-module
coming from a $\mc O(Y)[G]$-module $M$, one may compute the group cohomology of sheaves via
the standard group cohomology:
\begin{equation*}
	\mc H^i(G, \mc F) \cong \wt{H^i(G, M)}.
\end{equation*}
In particular, group cohomology of a quasicoherent $\mc O[G]$-sheaf is
a quasicoherent $\mc O$-module. Moreover for any $Q \in Y$:
\begin{equation} \label{eqn:group_cohomology_stalk}
	\mc H^i(G, \mc F)_Q \cong H^i(G, \mc F_Q).
\end{equation}
The sheaf group cohomology may be also computed using \v{C}ech complex (cf. \cite[section 3.1]{BertinMezard2000}). However, we will not use this fact in any way.

\subsection{Galois coverings of curves}
We now turn to the case of curves over a field $k$. 
Let $X/k$ be a smooth projective curve with an action of a finite group~$G$, i.e. a homomorphism $G \to \Aut_k(X)$. In this case one can define the quotient $Y := X/G$ of $X$ by the $G$-action.
It is a smooth projective curve. Its underlying space is the topological quotient $X/G$ and the structure sheaf is given by $\pi_*^G(\mc O_X)$, where $\pi : X \to Y$ is the quotient morphism.
We say that $X$ is a \bb{$G$-covering of~$Y$}. \\

In this section we will investigate the $G$-sheaves on $Y$ coming from 
its $G$-coverings. Suppose that $\pi : X \to Y$ is a $G$-covering of~$Y$.
Let~$\mc F$ be a $\mc O_X$-sheaf on~$X$ with a $G$-action lifting that on~$X$.
Then $\pi_* \mc F$ is an $\mc O_Y [G]$-module. It is natural to try to relate the group cohomology
of $\pi_* \mc F$ to the ramification of~$\pi$. Suppose for a while that the action of $G$ on $X$ is free, i.e. that $\pi : X \to Y$ is unramified. In this case the functors
\begin{eqnarray*}
	\mc F &\mapsto& \pi_*^G(\mc F)\\
	\pi^*(\mc G) &\mapsfrom& \mc G
\end{eqnarray*}
are exact and provide an equivalence between the category of coherent $\mc O_Y$-modules and
coherent $\mc O_X$-modules (cf. \cite[Proposition II.7.2, p. 70]{MumfordAV}).
In particular, $\mc H^i(G, \pi_* \mc F) = 0$ for all $i \ge 1$ and every coherent $\mc O_X$-module
$\mc F$. The following Proposition treats the general case.
\begin{Proposition} \label{prop:gr_coh_torsion_sheaf}
	Keep the notation introduced in Section~\ref{sec:intro}. Let 
	$\mc F$ be a coherent $\mc O_X$-module, which is $G$-equivariant. Then for
	every $i \ge 1$
	\[ \mc H^i(G, \pi_* \mc F)\]
	is a torsion sheaf, supported on the wild ramification locus of $\pi$.
\end{Proposition}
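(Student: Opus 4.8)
The plan is to prove this statement by reducing to a local computation at each point of $Y$, using the compatibility of sheaf group cohomology with stalks and the fact that unramified points contribute nothing.

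First I would use the stalk formula~\eqref{eqn:group_cohomology_stalk}, namely $\mc H^i(G, \pi_* \mc F)_Q \cong H^i(G, (\pi_* \mc F)_Q)$, to reduce the question to the vanishing and torsion properties of the group cohomology of the stalks. The key point is to identify the stalk $(\pi_* \mc F)_Q$ as a $G$-module. For a point $Q \in Y$ with fiber $\pi^{-1}(Q) = \{P_1, \ldots, P_r\}$, the stalk $(\pi_* \mc F)_Q = \bigoplus_{P \in \pi^{-1}(Q)} \mc F_P$, and this decomposition is permuted transitively by $G$ since $G$ acts transitively on the fibers. Picking a point $P$ over $Q$ with stabilizer (decomposition group) $G_{P,0}$, I would recognize this stalk as an induced module $\Ind_{G_{P,0}}^G \mc F_P$. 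Then Shapiro's lemma~\eqref{eqn:shapiro} gives
\[
\mc H^i(G, \pi_* \mc F)_Q \cong H^i(G, \Ind_{G_{P,0}}^G \mc F_P) \cong H^i(G_{P,0}, \mc F_P).
\]

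Next I would analyze $H^i(G_{P,0}, \mc F_P)$ for $i \ge 1$. If $Q$ is unramified, then $G_{P,0}$ is trivial and the cohomology vanishes in positive degrees, so $\mc H^i(G, \pi_* \mc F)$ vanishes on the unramified locus; this already shows the support is contained in the (finite) ramification locus, hence the sheaf is torsion. To sharpen the support to the \emph{wild} ramification locus, I would consider a tamely ramified point, where by definition $G_{P,0}$ has order prime to $p = \cha k$, equivalently the wild inertia $G_{P,1}$ is trivial so that $n_P = 0$. In the tame case $\# G_{P,0}$ is invertible in $\mc O_{X,P}$ (and in $k$), so by the averaging/Maschke-type argument $H^i(G_{P,0}, \mc F_P) = 0$ for all $i \ge 1$: multiplication by the invertible order $\# G_{P,0}$ kills the positive-degree cohomology of any module over a ring in which it is a unit. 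Therefore $\mc H^i(G, \pi_* \mc F)_Q = 0$ at every tamely ramified or unramified point, leaving only the finitely many wildly ramified points in the support, which also confirms the sheaf is torsion (its support is a finite set of closed points).

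The main obstacle I anticipate is the careful justification that the stalk $(\pi_* \mc F)_Q$ is genuinely the induced module $\Ind_{G_{P,0}}^G \mc F_P$ as a $G$-module rather than merely as a direct sum of abelian groups; one must check that the $G$-action permutes the summands $\mc F_P$ compatibly with the identification of $G_{P,0}$-actions on each summand, which relies on the transitivity of $G$ on the fiber and the $G$-equivariance of $\mc F$. A secondary technical point is that the stalk $(\pi_* \mc F)_Q$ is taken with respect to the quotient topology on $Y$, so one must verify that $(\pi_* \mc F)_Q = \bigoplus_{P \mapsto Q} \mc F_P$ — this follows because $\pi$ is finite, hence $\pi_*$ commutes with taking stalks and the fiber over $Q$ is finite. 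With these identifications in place the rest is the standard Maschke argument in the tame case, which is routine.
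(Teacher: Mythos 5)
Your overall strategy (reduce to stalks via~\eqref{eqn:group_cohomology_stalk}, exhibit the stalk as an induced module, apply Shapiro, then kill the tame points by a Maschke-type averaging) is in spirit the same as the paper's, which does this through Lemma~\ref{lem:group_cohomology_higher_ramification}. However, the step your whole argument rests on is false as stated: for a finite morphism one does \emph{not} have $(\pi_* \mc F)_Q \cong \bigoplus_{P \in \pi^{-1}(Q)} \mc F_P$, and the justification you give (``$\pi_*$ commutes with stalks since $\pi$ is finite'') does not yield it. Writing $Q \in \Spec A \subset Y$, $\Spec B = \pi^{-1}(\Spec A) \subset X$ and $\mc F = \wt M$, the stalk of the pushforward is the localization $M_{\mf q} = M \otimes_A A_{\mf q}$, a module over the semi-local ring $B_{\mf q} := B \otimes_A A_{\mf q}$. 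Since $B$ is a domain, $B_{\mf q}$ is a domain, hence has connected spectrum, so it is \emph{not} the product $\prod_{\mf p \mid \mf q} B_{\mf p}$, and $M_{\mf q} \neq \bigoplus_{\mf p \mid \mf q} M_{\mf p}$ as soon as the fiber has more than one point. Concretely, for $\mc F = \mc O_X$ and $Q$ a point where $\pi$ splits completely, $(\pi_* \mc O_X)_Q$ is a finite free $\mc O_{Y,Q}$-module of rank $\# G$, whereas each single stalk $\mc O_{X,P}$ is already not finite over $\mc O_{Y,Q}$ (it contains inverses of uniformizers of the \emph{other} points of the fiber); so the claimed $G$-module identification cannot hold, and in particular your appeal to Shapiro's lemma for the uncompleted stalk is unjustified.

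The repair requires exactly the ingredient you never invoke: the direct-sum/induced-module decomposition becomes true after completion, $\wh{(\pi_* \mc F)_Q} \cong \bigoplus_{P \mid Q} \wh{\mc F_P} \cong \Ind_{G_{P,0}}^G \wh{\mc F_P}$, and one then needs the invariance of group cohomology under completion~\eqref{eqn:completion} (valid here because $\mc F$ is coherent, so the stalks are finitely generated) to conclude $H^i(G, (\pi_* \mc F)_Q) \cong H^i(G_{P,0}, \mc F_P)$ via Shapiro~\eqref{eqn:shapiro}. After that, your endgame is fine: at an unramified or tamely ramified point $G_{P,1} = 0$, so $\# G_{P,0}$ is prime to $p$ (as $G_{P,1}$ is the $p$-Sylow subgroup of $G_{P,0}$) and averaging kills $H^i(G_{P,0}, \mc F_P)$ for $i \ge 1$; this is exactly what the paper encapsulates in Lemma~\ref{lem:group_cohomology_higher_ramification}, which even sharpens the conclusion to $H^i(G_{\mf p,1}, M_{\mf p})$ using~\eqref{eqn:sylow}. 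One further cosmetic difference: the paper deduces torsionness from vanishing at the generic point (normal basis theorem plus Shapiro, then coherence), while you get it from vanishing at all but finitely many closed points, which is also acceptable once the local step is fixed. So the architecture of your proof is sound, but the unproved (indeed false) stalk decomposition is a genuine gap, and closing it forces you through the completion argument that constitutes the actual content of the paper's local lemma.
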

To prove Proposition~\ref{prop:gr_coh_torsion_sheaf} we shall need the following
lemma involving group cohomology of modules over Dedekind domains.
\begin{Lemma} \label{lem:group_cohomology_higher_ramification}
  Let $k$ be an algebraically closed field.
	Let $B$ be a finitely generated $k$-algebra, which is a Dedekind domain equipped with a $k$-linear action of the group $G$. Suppose that $A := B^G$ is a principal ideal domain with a maximal ideal $\mf q$. Let $G_{\mf p, i}$ denote the $i$-th higher ramification group of a prime ideal $\mf p \in \Spec B$ over $\mf q$. Then for every $B$-module $M$ we have an isomorphism
	of $B$-modules:
	\begin{equation*} 
		H^i(G, M) \cong H^i(G_{\mf p, 1}, M_{\mf p})
	\end{equation*}
	(here $M_{\mf p}$ denotes the localisation of $M$ at $\mf p$).
\end{Lemma}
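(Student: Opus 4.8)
The plan is to compute $H^i(G,M)$ by peeling off, one group-theoretic layer at a time, every part of $G$ that contributes nothing, until only the wild inertia $G_{\mf p,1}$ at a single prime over $\mf q$ survives. The three recorded properties of group cohomology are exactly the tools for this: Shapiro's lemma \eqref{eqn:shapiro} will cut $G$ down to a decomposition group, the $p$-Sylow reduction \eqref{eqn:sylow} will cut the decomposition group down to its wild inertia, and invariance under completion \eqref{eqn:completion} will let me localise everything into a setting where these apply. Before any of this I would make two harmless reductions. Since $G$ is finite, $H^i(G,-)$ commutes with filtered colimits, so writing $M$ as a colimit of finitely generated submodules I may assume $M$ is finitely generated. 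For $i \ge 1$ the $A$-module $H^i(G,M)$ is supported on the ramified primes (over an unramified prime the decomposition group is trivial, as $k$ is algebraically closed, so that localisation vanishes), so the assertion is local at $\mf q$ and I may localise to assume $A$ is a discrete valuation ring; since $B$ is finite over $A = B^G$ and the action is $k$-linear, \eqref{eqn:completion} then lets me replace $A$, $B$, $M$ by their $\mf q$-adic completions without affecting any $H^i(G,-)$.

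The central step is Shapiro's lemma. Over the complete local base one has $\wh{B}_{\mf q} \cong \prod_{\mf p \mid \mf q} \wh{B}_{\mf p}$, each factor isomorphic to $k[[t]]$ because $k$ is algebraically closed, and correspondingly $\wh{M}_{\mf q} \cong \bigoplus_{\mf p \mid \mf q} \wh{M}_{\mf p}$. As $G$ permutes the primes over $\mf q$ transitively with stabiliser the decomposition group $G_{\mf p,0}$, this direct sum is precisely an induced module, $\wh{M}_{\mf q} \cong \Ind_{G_{\mf p,0}}^G \wh{M}_{\mf p}$. Applying \eqref{eqn:shapiro} then gives $H^i(G,M) \cong H^i(G_{\mf p,0}, \wh{M}_{\mf p})$, reducing everything to the single prime $\mf p$ and its stabiliser.

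It remains to pass from the decomposition group $G_{\mf p,0}$ to the wild inertia $G_{\mf p,1}$. The subgroup $G_{\mf p,1}$ is normal in $G_{\mf p,0}$ with quotient of order prime to $p$, and since $\wh{M}_{\mf p}$ is a module over the characteristic-$p$ ring $k[[t]]$, that prime-to-$p$ integer acts invertibly on it. Hence, exactly as in the justification of \eqref{eqn:sylow}, the higher cohomology of $G_{\mf p,0}/G_{\mf p,1}$ vanishes and the Lyndon--Hochschild--Serre spectral sequence collapses, yielding $H^i(G_{\mf p,0}, \wh{M}_{\mf p}) \cong H^i(G_{\mf p,1}, \wh{M}_{\mf p})$; undoing the completion once more via \eqref{eqn:completion} gives $H^i(G_{\mf p,1}, M_{\mf p})$, as claimed.

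The step I expect to be most delicate is this last one. The spectral sequence collapse genuinely identifies $H^i(G_{\mf p,0}, \wh{M}_{\mf p})$ with the $G_{\mf p,0}/G_{\mf p,1}$-invariants of $H^i(G_{\mf p,1}, \wh{M}_{\mf p})$, so to land on the clean statement one must check that these tame-quotient invariants do not shrink the module; this is automatic precisely when $G_{\mf p,0}$ is itself a $p$-group, so that there is no tame part, which is the situation governing the later Artin--Schreier computations with $G \cong \ZZ/p$. A secondary point requiring care is the bookkeeping of the module structure, since $H^i(G,M)$ is a priori only a $B^G$-module, and one should confirm that all the identifications above respect whichever structure the statement intends.
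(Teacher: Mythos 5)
Your proposal follows the paper's proof step for step: the isomorphism $\wh M_{\mf q} \cong \Ind_{G_{\mf p,0}}^{G} \wh M_{\mf p}$ combined with the completion invariance \eqref{eqn:completion} and Shapiro's lemma \eqref{eqn:shapiro} to cut $G$ down to the decomposition group, followed by the passage from $G_{\mf p,0}$ to its normal $p$-Sylow subgroup $G_{\mf p,1}$ via the Lyndon--Hochschild--Serre argument behind \eqref{eqn:sylow}. Your preliminary reductions (finitely generated $M$, localisation at $\mf q$) are only a more explicit version of what the paper's use of \eqref{eqn:completion} already presupposes, so in substance the two proofs are the same.

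The one point where you diverge from the paper is the ``delicate point'' you flag at the end, and you are right to flag it: it is not a defect of your write-up, but an overstatement in the paper. The paper closes this step simply by citing \eqref{eqn:sylow}, which is stated there \emph{without} taking invariants; but, as you observe, the spectral sequence (equivalently, the restriction--corestriction argument) only gives $H^i(G,M) \cong H^i(P,M)^{G/P}$ for a normal $p$-Sylow subgroup $P$, and the invariants cannot be dropped in general. Concretely, for $G = S_3$, $p = 3$ and the trivial module $M = \FF_3$ one has $H^1(S_3,\FF_3) = \Hom(S_3,\FF_3) = 0$ while $H^1(\ZZ/3,\FF_3) \cong \FF_3$. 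This transfers to the setting of the Lemma: take a cover with a point where $G_{\mf p,0} \cong S_3$ and $G_{\mf p,1} \cong \ZZ/3$ (in characteristic $3$ such covers exist, e.g.\ $z^3 - z = 1/x$ with the extra involution $(x,z) \mapsto (-x,-z)$) and let $M = B/\mf p$ be the residue field; then $H^1(G,M) = \Hom(S_3,k) = 0$ whereas $H^1(G_{\mf p,1}, M_{\mf p}) = \Hom(\ZZ/3,k) \cong k$. So the Lemma as stated fails for general $M$ once the inertia group has a nontrivial tame quotient, and the statement your argument actually proves --- an isomorphism onto the $G_{\mf p,0}/G_{\mf p,1}$-invariants, with the clean form valid when $G_{\mf p,0}$ is a $p$-group --- is the correct one. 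This corrected form is also all the paper ever uses: Proposition~\ref{prop:gr_coh_torsion_sheaf} needs only the vanishing when $G_{\mf p,1} = 0$, Proposition~\ref{prop:invariants} needs only injectivity of the restriction map, and the computations of Section~\ref{sec:local_computations} concern $G \cong \ZZ/p$, where decomposition group and wild inertia coincide. In short: same proof as the paper, except that where the paper silently relies on the unqualified form of \eqref{eqn:sylow}, you correctly identified that only the invariant part is obtained.
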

\begin{proof}
 One easily sees that we have an isomorphism of $B[G]$-modules
	\[ \wh M_{\mf q} \cong \Ind_{G_{\mf p, 0}}^G \wh M_{\mf p} \]
 and thus by~\eqref{eqn:completion} and \eqref{eqn:shapiro} $H^i(G, M)
 \cong H^i(G_{\mf p, 0}, M_{\mf p})$. Moreover, $G_{\mf p, 1}$ is a normal $p$-Sylow subgroup of
 $G_{\mf p, 0}$ (cf. \cite[Corollary 4.2.3., p. 67]{Serre1979}). Hence the proof follows by~\eqref{eqn:sylow}.
\end{proof}
\begin{proof}[Proof of Proposition~\ref{prop:gr_coh_torsion_sheaf}]
 Denote by $\xi$ the generic point of $Y$. Recall that by the Normal Base Theorem (cf. \cite[sec. 4.14]{Jacobson_basic_I}), $k(X) = \Ind^G k(Y)$ is an induced
$G$-module. Therefore $(\pi_* \mc F)_{\xi}$ is also an induced $G$-module
(since it is a $k(X)$-vector space of finite dimension) and by~\eqref{eqn:shapiro}:
\begin{eqnarray*}
 \mc H^i(G, \pi_* \mc F)_{\xi} = H^i(G, (\pi_* \mc F)_{\xi}) = 0.
\end{eqnarray*}
 Thus, since the sheaf $\mc H^i(G, \pi_* \mc F)$ is coherent, it must be a torsion sheaf.
 Note that if a point $Q \in Y$ is tamely ramified then $G_{P, 1} = 0$ for any $P \in \pi^{-1}(Q)$
 and thus $\mc H^i(G, \pi_* \mc F)_Q = 0$ by Lemma~\ref{lem:group_cohomology_higher_ramification}. This concludes the proof.
\end{proof}
\noindent We will recall now a standard formula describing $G$-invariants of a $\mc O_Y[G]$-module
coming from an invertible $\mc O_X$-module. For a reference see e.g. the proof of \cite[Proposition 5.3.2]{BertinMezard2000}.
\begin{Lemma} \label{lem:invertible_sheaf_invariants}
	For any $G$-invariant divisor $D \in \Divv(X)$:
	\[ \pi_*^G(\mc O_X(D)) = \mc O_Y \left( \left[ \frac{\pi_* D}{\# G} \right] \right), \]
	where for $\delta \in \Divv(Y) \otimes_{\ZZ} \QQ$, we denote by $[\delta]$ the integral part taken
	coefficient by coefficient.
\end{Lemma}
\begin{Corollary} \label{cor:pi_G_omega}
Keep the notation of Section~\ref{sec:intro}. Let:
	\begin{equation*}
		R' = \left[ \frac{\pi_* R}{\# G} \right] \in \Divv(Y).
	\end{equation*}
	Then:
 \[ \pi_*^G \Omega_{X/k} = \Omega_{Y/k} \otimes \mc O_Y(R'). \]
	In particular:
 \[ \dim_k H^0(X, \Omega^1_{X/k})^G
			      =
			      \begin{cases}
			      	g_Y,               & \textrm{ if } R' = 0, \\
			      	g_Y - 1 + \deg R', & \textrm{ otherwise. } 
			      \end{cases}
	\]	
\end{Corollary}
\begin{proof}
  The first claim follows by Lemma~\ref{lem:invertible_sheaf_invariants} by taking $D$ to be the canonical divisor of $X$ and using the Riemann-Hurwitz formula. 
  To prove the second claim, observe that
	\begin{equation*}
		H^0(X, \Omega_{X/k})^G = H^0(Y, \pi_*^G \Omega_{X/k}) = 
		H^0(Y, \Omega_{Y/k} \otimes \mc O_Y(R'))
	\end{equation*}
	and apply the Riemann-Roch theorem (cf. \cite[Theorem IV.1.3]{Hartshorne1977}). 
\end{proof}
\noindent We end this section with one more elementary observation.
\begin{Lemma} \label{lem:vanishing_of_r_prim}
	$R'$ (given as above) vanishes if and only if the morphism
	$\pi : X \to Y$ is tamely ramified.
\end{Lemma}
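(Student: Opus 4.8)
The plan is to prove Lemma~\ref{lem:vanishing_of_r_prim} by relating the coefficients of $R'$ directly to the ramification data, using the already-established relationship between the ramification divisor $R$ and the higher ramification groups. Recall that $R = \sum_{P \in X} d_P \cdot (P)$ with $d_P = \sum_{i \ge 0}(\# G_{P,i} - 1)$, and that the morphism $\pi$ is tamely ramified at $P$ precisely when $G_{P,1} = 0$, i.e. when the wild part of the inertia vanishes. Thus tameness everywhere is equivalent to $d_P = e_P - 1$ for all $P$ (only the $i=0$ term survives), whereas wild ramification at $P$ forces $d_P \ge e_P$.

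First I would compute the coefficient of $R' = \left[\frac{\pi_* R}{\# G}\right]$ at a point $Q \in Y$. For $P \in \pi^{-1}(Q)$ the pushforward $\pi_* R$ has coefficient $d_P$ at $Q$ multiplied by the number of preimages, but it is cleanest to work preimage-by-preimage: since all $P$ over $Q$ are $G$-conjugate they share the same $d_P =: d_Q$ and $e_P =: e_Q$, and there are exactly $\#G / e_Q$ of them, so the coefficient of $\pi_* R$ at $Q$ equals $\frac{\#G}{e_Q} \cdot d_Q$. Dividing by $\# G$, the coefficient of $\frac{\pi_* R}{\# G}$ at $Q$ is $\frac{d_Q}{e_Q}$, and hence the coefficient of $R'$ is $\left[\frac{d_Q}{e_Q}\right]$.

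Next I would analyze when this integer part is zero for every $Q$. Since $d_Q \ge 0$ always, $R' = 0$ iff $\left[\frac{d_Q}{e_Q}\right] = 0$ for all $Q$, i.e. iff $d_Q < e_Q$ for every ramified $Q$ (at unramified points $d_Q = 0$ trivially). In the tame case $d_Q = e_Q - 1 < e_Q$, giving integer part $0$; conversely, if $Q$ is wildly ramified then $G_{P,1} \neq 0$, so the $i=1$ term contributes at least $\# G_{P,1} - 1 \ge 1$ on top of the $i=0$ contribution $e_Q - 1$, yielding $d_Q \ge e_Q$ and hence $\left[\frac{d_Q}{e_Q}\right] \ge 1$. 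This establishes the equivalence coefficient by coefficient.

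The main obstacle, such as it is, is purely bookkeeping: one must be careful that the divisor $\pi_* R$ is being computed with the correct multiplicities (the factor $\# G / e_Q$ of geometric preimages, not a ramification-weighted pushforward) so that the quotient $\frac{\pi_* R}{\# G}$ genuinely reduces to $\frac{d_Q}{e_Q}$ at each point. Once the formula for the coefficient of $R'$ is correctly identified as $\left[\frac{d_Q}{e_Q}\right]$, the equivalence with tameness is immediate from the inequality $d_Q \ge e_Q$ characterizing wild ramification, so no genuine difficulty remains beyond this verification.
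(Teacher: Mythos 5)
Your proof is correct and follows essentially the same route as the paper: both compute the coefficient of $R'$ at $Q \in Y$ to be $\left[\frac{d_Q}{e_Q}\right]$ (using $\#\pi^{-1}(Q) = \#G/e_Q$) and then invoke the dichotomy $d_Q = e_Q - 1$ in the tame case versus $d_Q \ge e_Q$ in the wild case. The only difference is that you spell out the wild-case inequality via the $i = 1$ term of $d_P = \sum_{i \ge 0}(\#G_{P,i}-1)$, which the paper leaves implicit.
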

\begin{proof}
	Recall that $R = \sum_{P \in X} d_P \cdot (P)$. Hence
	\begin{eqnarray*}
	 R' &=& \sum_{Q \in Y} \left[ \frac{d_Q \cdot \# \pi^{-1}(Q)}{\# G} \right] (Q)\\
	 &=& \sum_{Q \in Y} \left[ \frac{d_Q}{e_Q} \right] (Q).
	\end{eqnarray*}
	Note however that $d_Q \ge e_Q - 1$ with an equality if and only if 
	$\pi$ is tamely ramified at $Q$. This completes the proof.
\end{proof}
\section{Computing the defect} \label{sec:main_lemma}
\noindent The goal of this section is to prove the following Proposition.
\begin{Proposition} \label{prop:main_lemma}
  We follow the notation introduced in Section~\ref{sec:intro}. Then:
  \[ \delta(X, G) = \sum_{Q \in Y} \dim_k \im \bigg(H^1(G, (\pi_* \mc O_X)_Q)
  \to H^1(G, (\pi_* \Omega_{X/k})_Q) \bigg), \]
  where
  \[ H^1(G, (\pi_* \mc O_X)_Q) \to H^1(G, (\pi_* \Omega_{X/k})_Q) \]
  is the map induced by the differential $\mc O_X \to \Omega_{X/k}$.
\end{Proposition}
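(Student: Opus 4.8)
The plan is to read $\delta(X,G)$ as the obstruction to the right‑exactness of $(-)^G$ on the Hodge--de Rham sequence \eqref{eqn:hodge_de_rham_se} of $X$, and then to localise that obstruction on $Y$. Taking $G$‑cohomology of the short exact sequence of $k[G]$‑modules $0\to H^0(X,\Omega_{X/k})\to H^1_{dR}(X/k)\to H^1(X,\mc O_X)\to 0$, the first terms of the long exact sequence give
\[
\delta(X,G)=\dim_k\im\Big(H^1(X,\mc O_X)^G\xrightarrow{\ \partial\ }H^1\big(G,H^0(X,\Omega_{X/k})\big)\Big),
\]
where $\partial$ is the connecting homomorphism, so the task is to compute $\dim_k\im\partial$ locally.

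To that end I would use the derived functors $R^\bullet\Gamma^G$ of $\Gamma^G:=H^0(X,-)^G=H^0(Y,\pi_*^G(-))$ and the equivariant de Rham hypercohomology $\HH^\bullet_G:=R^\bullet\Gamma^G(\Omega^\bullet_{X/k})$. These are computed by two spectral sequences: the Cartan--Leray one, $H^p(G,H^q(X,\mc F))\Rightarrow R^{p+q}\Gamma^G(\mc F)$, and the Grothendieck one on $Y$, $H^p(Y,\mc H^q(G,\pi_*\mc F))\Rightarrow R^{p+q}\Gamma^G(\mc F)$. By Proposition~\ref{prop:gr_coh_torsion_sheaf} the sheaves $\mc H^{\ge 1}(G,\pi_*\mc F)$ are skyscrapers on the wild locus, so the second one degenerates and yields natural short exact sequences
\[
0\to H^1(Y,\pi_*^G\mc F)\to R^1\Gamma^G(\mc F)\to\bigoplus_{Q\in Y}H^1\big(G,(\pi_*\mc F)_Q\big)\to 0 .
\]
Feeding the stupid‑filtration triangle $\Omega_{X/k}[-1]\to\Omega^\bullet_{X/k}\to\mc O_X$ into $R^\bullet\Gamma^G$, the universal derivation induces $d_\ast:R^1\Gamma^G(\mc O_X)\to R^1\Gamma^G(\Omega_{X/k})$, and the aim of this part is to prove $\delta(X,G)=\dim_k\im d_\ast$.

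The payoff is that $d_\ast$ is \emph{purely local}. Using the displayed sequences together with $\pi_*^G\mc O_X=\mc O_Y$ and $\pi_*^G\Omega_{X/k}=\Omega_{Y/k}(R')$ (Corollary~\ref{cor:pi_G_omega}), the map $d_\ast$ respects both filtrations; on the sub‑objects it is the map $H^1(Y,\mc O_Y)\to H^1(Y,\Omega_{Y/k}(R'))$ induced by $d$, which factors through $H^1(Y,\mc O_Y)\xrightarrow{0}H^1(Y,\Omega_{Y/k})$ and hence vanishes by degeneration of Hodge--de Rham on $Y$; moreover, whenever $R'\neq 0$ the group $H^1(Y,\Omega_{Y/k}(R'))$ itself vanishes by Serre duality, as $\deg R'>0$. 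Thus $\im d_\ast$ is carried isomorphically onto the local quotient, giving
\[
\dim_k\im d_\ast=\sum_{Q\in Y}\dim_k\im\Big(H^1\big(G,(\pi_*\mc O_X)_Q\big)\to H^1\big(G,(\pi_*\Omega_{X/k})_Q\big)\Big),
\]
which is exactly the asserted right‑hand side.

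The step I expect to be the main obstacle is the identification $\delta(X,G)=\dim_k\im d_\ast$: the defect is built from the honest invariants $H^\bullet(X,-)^G$, whereas $d_\ast$ lives on the \emph{derived} invariants $R^\bullet\Gamma^G$, and the Cartan--Leray spectral sequence measures their discrepancy through the rows $H^p\big(G,H^0(X,-)\big)$ coming from $H^0(X,\mc O_X)=k=H^1(X,\Omega_{X/k})$ and through the transgressions $H^1(X,\mc O_X)^G\to H^2(G,k)$ and $H^1_{dR}(X/k)^G\to H^2(G,k)$. I expect the $H^1(G,k)$‑contributions to cancel between $R^1\Gamma^G(\mc O_X)$ and $\HH^1_G$; the genuine point is to match the images of the two transgressions. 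I would attack this by restricting $\mc O_X$ and $\Omega^\bullet_{X/k}$ to the skyscraper sheaves supported on the ramification orbits, which forces both transgressions into the kernels of the restriction maps $H^2(G,k)\to H^2(G_{P,0},k)$, and by invoking the Serre/Poincaré self‑duality of de Rham cohomology (under which the relevant obstruction classes in $H^2\big(G,H^0(X,\Omega_{X/k})\big)$ attached to $\mc O_X$ and to $\Omega_{X/k}$ are identified) compatibly with the surjection $H^1_{dR}(X/k)\twoheadrightarrow H^1(X,\mc O_X)$ supplied by degeneration on $X$.
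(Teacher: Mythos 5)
The sound half of your argument is the localization: your two spectral sequences are exactly the paper's \eqref{eqn:first_spectral_sequence} and \eqref{eqn:second_spectral_sequence}, your vanishing of $d_*$ on the sub-objects $H^1(Y,\mc O_Y)\to H^1(Y,\Omega_{Y/k}\otimes\mc O_Y(R'))$ is the paper's Lemma~\ref{lem:degeneration}, and your use of $H^1(Y,\Omega_{Y/k}\otimes \mc O_Y(R'))=0$ for $R'\neq 0$ plus the skyscraper decomposition reproduces the paper's reduction to stalks; the tame case also comes out correctly in your setup. The genuine gap is the step you yourself flag: the identity $\delta(X,G)=\dim_k\im d_*$. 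Running your stupid-filtration triangle against the two Cartan--Leray five-term sequences (for $\mc O_X$ and for $\Omega^{\bullet}_{X/k}$, whose $H^1(G,k)$-terms do cancel, as you predict) gives precisely
\[
\delta(X,G)=\dim_k\im d_* \;+\; \dim_k\im\tau_{\mc O}-\dim_k\im\tau_{dR},
\]
where $\tau_{\mc O}\colon H^1(X,\mc O_X)^G\to H^2(G,k)$ and $\tau_{dR}\colon H^1_{dR}(X/k)^G\to H^2(G,k)$ are the $d_2$-transgressions. So your claim is \emph{equivalent} to $\dim_k\im\tau_{\mc O}=\dim_k\im\tau_{dR}$, which is exactly the paper's equality $c_1=c_2$ (by the five-term sequences, $\im\tau_{\mc O}=\ker(H^2(G,k)\to R^2\Gamma^G(\mc O_X))$ and likewise for $\tau_{dR}$). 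Functoriality along $\Omega^{\bullet}_{X/k}\to\mc O_X$ gives $\im\tau_{dR}\subseteq\im\tau_{\mc O}$ for free; note this already yields the inequality $\delta(X,G)\geq\sum_Q\dim_k\im(\cdots)$, which suffices for Main Theorem but not for the equality stated in Proposition~\ref{prop:main_lemma}.

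The reverse inclusion is the real content, and your proposed mechanism does not engage it. Both transgressions take values in $H^2(G,k)$, not in $H^2(G,H^0(X,\Omega_{X/k}))$, so there are no ``obstruction classes in $H^2(G,H^0(X,\Omega_{X/k}))$'' to identify; and Serre/Poincar\'{e} duality, being an isomorphism of $k[G]$-modules, does not convert into a statement about $H^2(G,-)$ of the modules involved (for finite-dimensional $M$ one has $H^i(G,M^*)\cong H_i(G,M)^*$, i.e.\ duality exchanges cohomology with homology/coinvariants, so it cannot identify $\im\tau_{dR}$ with $\im\tau_{\mc O}$ inside $H^2(G,k)$). Concretely, what must be proved is that $\tau_{\mc O}$ has the same image on the subspace $\ker\partial=\im\big(H^1_{dR}(X/k)^G\to H^1(X,\mc O_X)^G\big)$ as on all of $H^1(X,\mc O_X)^G$. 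The paper does this by showing both images equal $\ker\big(H^2(G,k)\to H^0(Y,\mc H^2(G,\pi_*\mc O_X))\big)$: for $\mc O_X$ this is the isomorphism case of Lemma~\ref{lem:monomorphism_R^2}, but for the de Rham complex one needs the monomorphism $\HH^0(Y,\mc H^2(G,\pi_*\Omega^{\bullet}_{X/k}))\hookrightarrow \RR^2\Gamma^G(\pi_*\Omega^{\bullet}_{X/k})$ (an argument with the vanishing of ${}_IE_2^{ij}$ on a curve) together with the compatibility diagram of Corollary~\ref{cor:commutative_diagram} for the truncation map $\Omega^{\bullet}_{X/k}\to\mc O_X$. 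Your ``restriction-kernel'' description of the images is the right target, but without an argument of this type for the \emph{complex} (not just for $\mc O_X$), your proof establishes only an inequality, not the asserted equality.
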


\subsection{Proof -- preparation}
Recall that $i$-th hypercohomology $\HH^i(Y, -)$
is defined as the $i$-th derived functor of
\[ H^0(Y, -) : \uu k_Y \mmod \to k \mmod. \]
The hypercohomology may be computed in terms of the usual cohomology using the spectral sequences:
\begin{equation}
	_{I} E^{ij}_1 = H^j(Y, \mc F^i) \Rightarrow \HH^{i+j}(Y, \mc F^{\bullet}),
	\label{eqn:spectral_sequence_hypercohomology}
\end{equation}
\begin{equation}
	_{II} E^{ij}_2 = H^i(Y, h^j(\mc F^{\bullet})) \Rightarrow \HH^{i+j}(Y, \mc F^{\bullet}).
	\label{eqn:conjugated_spectral_sequence_hypercohomology}
\end{equation}
The de Rham cohomology of $X$ is defined as the hypercohomology of
the de Rham complex $\Omega_{X/k}^{\bullet}$. Note that $\pi$ is an affine morphism. Therefore $\pi_*$ is an exact functor on the category of quasi-coherent sheaves. Thus using the spectral sequence
\eqref{eqn:spectral_sequence_hypercohomology} we obtain:
\[ H^i_{dR}(X/k) = \HH^i(X, \Omega_{X/k}^{\bullet}) = \HH^i(Y, \pi_* \Omega_{X/k}^{\bullet}). \]
We start with the following observation.
\begin{Lemma} \label{lem:degeneration}
 The spectral sequence
 \[ E^{ij}_1 = H^j(Y, \pi_*^G \Omega_{X/k}^i) \Rightarrow
 \HH^{i+j}(Y, \pi_*^G \Omega_{X/k}^{\bullet})  \]
 degenerates at the first page.
\end{Lemma}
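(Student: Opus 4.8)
The plan is to identify the complex $\pi_*^G \Omega_{X/k}^{\bullet}$ explicitly and then reduce the degeneration to the (already known) degeneration of the Hodge--de Rham spectral sequence of the base curve $Y$. Since $X$ is a curve, $\Omega_{X/k}^{\bullet}$ is the two-term complex $[\mc O_X \to \Omega_{X/k}]$ concentrated in degrees $0$ and $1$. Applying the exact functor $\pi_*$ followed by $(-)^G$, and using $\pi_*^G \mc O_X = \mc O_Y$ together with Corollary~\ref{cor:pi_G_omega}, we see that $\pi_*^G \Omega_{X/k}^{\bullet}$ is the two-term complex $[\mc O_Y \xrightarrow{\delta} \Omega_{Y/k} \otimes \mc O_Y(R')]$, where $\delta$ is the de Rham differential of $Y$ followed by the natural inclusion $\iota \colon \Omega_{Y/k} \hookrightarrow \Omega_{Y/k} \otimes \mc O_Y(R')$. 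As $Y$ is a curve, the $E_1$-page is supported in the square $0 \le i, j \le 1$, so all differentials $d_r$ with $r \ge 2$ vanish automatically (they would shift the column index by at least $2$) and $E_2 = E_{\infty}$. Hence degeneration at $E_1$ is equivalent to the vanishing of the two potentially nonzero differentials $d_1^{0,j} \colon E_1^{0,j} \to E_1^{1,j}$ for $j = 0, 1$, each of which is induced by $\delta$ on cohomology.

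To prove this vanishing I would use the morphism of complexes
\[ \varphi \colon \Omega_{Y/k}^{\bullet} = [\mc O_Y \xrightarrow{d} \Omega_{Y/k}] \longrightarrow [\mc O_Y \xrightarrow{\delta} \Omega_{Y/k} \otimes \mc O_Y(R')] = \pi_*^G \Omega_{X/k}^{\bullet}, \]
which is the identity in degree $0$ and the inclusion $\iota$ in degree $1$. It induces a morphism of the associated hypercohomology spectral sequences which is the identity on the column $i = 0$ (where both complexes have $\mc O_Y$) and $\iota_*$ on the column $i = 1$. Naturality of the $d_1$-differentials then gives, for each $j$, a commutative square with top arrow the Hodge--de Rham differential $H^j(Y, \mc O_Y) \to H^j(Y, \Omega_{Y/k})$, bottom arrow $d_1^{0,j}$ of our spectral sequence, left arrow the identity, and right arrow $\iota_*$. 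Since $Y$ is a smooth projective curve, its Hodge--de Rham spectral sequence degenerates at $E_1$ (as recalled in Section~\ref{sec:intro}), so the top arrow vanishes; commutativity together with the left arrow being the identity forces $d_1^{0,j} = \iota_* \circ 0 = 0$.

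Combining the two paragraphs, both $d_1$-differentials vanish, so $E_1 = E_2 = E_{\infty}$ and the spectral sequence degenerates at the first page. The only real content is the identification of the complex and the observation that $\delta$ factors through the de Rham differential of $Y$; the main point to be careful about is that the degree-$0$ component of $\varphi$ is genuinely the identity of $\mc O_Y$ (so that the induced map on $E_1^{0,j}$ is an isomorphism), which is exactly the identification $\pi_*^G \mc O_X = \mc O_Y$ of the structure sheaf of the quotient. I expect no serious obstacle beyond this bookkeeping, since everything reduces to the classical degeneration for the curve $Y$.
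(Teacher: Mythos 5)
Your proof is correct and takes essentially the same route as the paper: both use the morphism of complexes $\Omega_{Y/k}^{\bullet} \to \pi_*^G \Omega_{X/k}^{\bullet}$, which is an isomorphism in degree $0$, together with the degeneration of the Hodge--de Rham spectral sequence of $Y$, to force the $d_1$-differentials $H^j(Y, \pi_*^G \mc O_X) \to H^j(Y, \pi_*^G \Omega_{X/k})$ to vanish. The explicit identification of the degree-$1$ term via Corollary~\ref{cor:pi_G_omega} is harmless extra bookkeeping that the paper's proof does not need.
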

\begin{proof}
 We have a morphism of complexes
 $\Omega_{Y/k}^{\bullet} \to \pi_*^G \Omega_{X/k}^{\bullet}$,
 which is an isomorphism on the zeroth term. Thus for $j = 0, 1$ we obtain a commutative diagram:
 \begin{center}
 \begin{tikzcd}
 {H^j(Y, \mc O_Y)} \arrow[r, "\cong" description] \arrow[d] & {H^j(Y, \pi_*^G \mc O_X)} \arrow[d] \\
 {H^j(Y, \Omega_{Y/k})} \arrow[r] & {H^j(Y, \pi_*^G \Omega_{X/k})},
 \end{tikzcd} 
 \end{center}
where the left arrow is zero and the upper arrow is an isomorphism.
Therefore for $j = 0, 1$ the maps
\[ H^j(Y, \pi_*^G \mc O_X) \to H^j(Y, \pi_*^G \Omega_{X/k}) \]
are zero. This is the desired conclusion.
\end{proof}
\begin{Corollary} \label{cor:delta}
	\begin{eqnarray*}
	\delta(X, G) &=& \bigg(\dim_k \HH^1(Y, \pi_*^G \Omega_{X/k}^{\bullet}) - \dim_k \HH^1(Y, \pi_* \Omega_{X/k}^{\bullet})^G \bigg) \\
	&-& \bigg( \dim_k H^1(Y, \pi_*^G \mc O_X) - \dim_k H^1(Y, \pi_* \mc O_X)^G \bigg).
	\end{eqnarray*}
\end{Corollary}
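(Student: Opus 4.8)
The plan is to transport all three cohomology groups appearing in the definition~\eqref{eqn:defect_X} of $\delta(X,G)$ down to the quotient $Y$ and then to read off the asserted formula as a purely numerical bookkeeping of dimensions. Since $\pi$ is affine, $\pi_*$ is exact on quasi-coherent sheaves and commutes with the $G$-action, so each of the three terms rewrites on $Y$. Taking global sections commutes with forming $G$-invariants, which (as already observed in the proof of Corollary~\ref{cor:pi_G_omega}) gives
\[ \dim_k H^0(X,\Omega_{X/k})^G = \dim_k H^0(Y, \pi_*^G \Omega_{X/k}); \]
likewise $\dim_k H^1(X, \mc O_X)^G = \dim_k H^1(Y, \pi_* \mc O_X)^G$, and $\dim_k H^1_{dR}(X/k)^G = \dim_k \HH^1(Y, \pi_* \Omega_{X/k}^{\bullet})^G$, the last being the $G$-equivariant version of the identity $H^i_{dR}(X/k) = \HH^i(Y, \pi_* \Omega_{X/k}^{\bullet})$ recorded above.

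The one term that is not yet in the shape of the claim is $\dim_k H^0(Y, \pi_*^G \Omega_{X/k})$, and this is precisely where Lemma~\ref{lem:degeneration} enters. That lemma guarantees that the Hodge-to-de Rham spectral sequence for the invariant complex $\pi_*^G \Omega_{X/k}^{\bullet}$ degenerates at the first page. Using $\pi_*^G \Omega^0_{X/k} = \pi_*^G \mc O_X$ and $\pi_*^G \Omega^1_{X/k} = \pi_*^G \Omega_{X/k}$, degeneration produces a short exact sequence
\[ 0 \to H^0(Y, \pi_*^G \Omega_{X/k}) \to \HH^1(Y, \pi_*^G \Omega_{X/k}^{\bullet}) \to H^1(Y, \pi_*^G \mc O_X) \to 0, \]
from which I read off $\dim_k H^0(Y, \pi_*^G \Omega_{X/k}) = \dim_k \HH^1(Y, \pi_*^G \Omega_{X/k}^{\bullet}) - \dim_k H^1(Y, \pi_*^G \mc O_X)$.

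Finally I would substitute these four identifications into~\eqref{eqn:defect_X} and collect terms, grouping the two hypercohomology contributions together and the two $H^1(-, \mc O)$ contributions together; this reproduces the stated formula verbatim. I do not expect any genuine obstacle: the entire content is carried by Lemma~\ref{lem:degeneration}, which trades $H^0$ of the invariant differentials for hypercohomology, and by the affineness of $\pi$, which lets me move the three cohomologies of $X$ down to $Y$. The only points meriting a moment's care are verifying that the isomorphisms induced by $\pi$ being affine are genuinely $G$-equivariant, so that passage to invariants is legitimate, and that the extension in the degenerate spectral sequence is additive on dimensions; both are immediate.
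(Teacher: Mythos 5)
Your proposal is correct and follows essentially the same route as the paper: both rest on the short exact sequence supplied by Lemma~\ref{lem:degeneration} to trade $\dim_k H^0(Y,\pi_*^G\Omega_{X/k})$ for $\dim_k \HH^1(Y,\pi_*^G\Omega_{X/k}^{\bullet}) - \dim_k H^1(Y,\pi_*^G\mc O_X)$, combined with the affineness of $\pi$ (and compatibility of global sections with $G$-invariants) to identify the cohomology of $X$ with that of the pushforwards on $Y$. The paper's proof is just the same dimension bookkeeping written out as a chain of equalities.
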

\begin{proof}
	By Lemma~\ref{lem:degeneration} we obtain an exact sequence:
	\[ 0 \to H^0(Y, \pi_*^G \Omega_{X/k}) \to \HH^1(Y, \pi_*^G \Omega_{X/k}^{\bullet}) \to H^1(Y, \pi_*^G \mc O_{X/k}) \to 0. \]
	Hence:
	\begin{align*}
	 \delta(X, G) &= \dim_k H^0(X, \Omega_{X/k})^G + \dim_k H^1(X, \mc O_X)^G 
	- \dim_k H^1_{dR}(X/k)^G \\
	&= \left( \dim_k \HH^1(Y, \pi_*^G \Omega_{X/k}^{\bullet}) - \dim_k H^1(Y, \pi_*^G \mc O_X) \right)\\
	&+ \dim_k H^1(X, \mc O_X)^G  - \dim_k H^1_{dR}(X/k)^G \\
	&= (\dim_k \HH^1(Y, \pi_*^G \Omega_{X/k}^{\bullet}) - \dim_k \HH^1(Y, \pi_* \Omega_{X/k}^{\bullet})^G) \\
	&- (\dim_k H^1(Y, \pi_*^G \mc O_X) - \dim_k H^1(Y, \pi_* \mc O_X)^G).
	\qedhere
	\end{align*}
\end{proof}
\noindent Corollary~\ref{cor:delta} implies that we need to compare the hypercohomology groups
\[ \HH^i(Y, (\mc F^{\bullet})^G) \textrm{ and } \HH^i(Y, \mc F^{\bullet})^G. \]
for $\mc F^{\bullet} = \pi_* \mc O_X$ (treated as a complex concentrated in degree $0$) and $\mc F^{\bullet} = \pi_* \Omega_{X/k}^{\bullet}$ (note that it is a complex of $\uu k_Y[G]$-modules
rather than $\mc O_Y$-modules, since the differentials in the de Rham
complex are not $\mc O_Y$-linear). Consider the 
commutative diagram:
\begin{center}
 \begin{tikzcd}
{\uu k_Y[G] \mmod} \arrow[r, "(-)^G"] \arrow[d, "{\Gamma(Y, -)}"] & \uu k_Y \mmod \arrow[d, "{\Gamma(Y, -)}"] \\
{k[G] \mmod} \arrow[r, "(-)^G"]                                 & k \mmod.                                
\end{tikzcd}
\end{center}
By applying Grothendieck spectral sequence to compositions of the functors in the diagram, we obtain two spectral sequences:
	\begin{eqnarray}
		_{I}E^{ij}_2 &=& \HH^i(Y, \mc H^j(G, \mc F^{\bullet})) \Rightarrow
		\RR^{i+j}\Gamma^G(\mc F^{\bullet}) \label{eqn:first_spectral_sequence}\\
		_{II} E^{ij}_2 &=& H^i(G, \HH^j(Y, \mc F^{\bullet})) \Rightarrow
		\RR^{i+j}\Gamma^G(\mc F^{\bullet}) \label{eqn:second_spectral_sequence},
	\end{eqnarray}
	(note that here $\mc H^j(G, \mc F^{\bullet})$ denotes a complex
	of $\uu k_Y$-modules with $l$th term being $\mc H^j(G, \mc F^l)$).\\
	For motivation, suppose at first that the 'obstructions'
	\[ \mc H^i(G, \mc F^l) \quad \textrm{ and } \quad H^i(G, \HH^l(Y, \mc F^{\bullet})) \]
	vanish for all $i \ge 1$ and $l \ge 0$ (this happens e.g. if $\cha k = 0$).
	Then the spectral sequences~\eqref{eqn:first_spectral_sequence} and~\eqref{eqn:second_spectral_sequence} lead us to the isomorphisms:
	\[
	  \HH^i(Y, (\mc F^{\bullet})^G) \cong 
	  \RR^{i}\Gamma^G(\mc F^{\bullet})
	  \cong (\HH^i(Y, \mc F^{\bullet}))^G.
	\]
	In general case the relation between $\HH^i(Y, (\mc F^{\bullet})^G)$  and $\HH^i(Y, \mc F^{\bullet})^G$ is more complicated. However, in the case of the first hypercohomology group, one can	extract some information from  the low-degree exact sequences of~\eqref{eqn:first_spectral_sequence} and \eqref{eqn:second_spectral_sequence}:
	\begin{eqnarray} 
		0 &\to& \HH^1(Y, (\mc F^{\bullet})^G) \to
		\RR^{1}\Gamma^G(\mc F^{\bullet}) \to \label{eqn:first_exact_sequence}\\
		&\to& \HH^0(Y, \mc H^1(G, \mc F^{\bullet})) \to \HH^2(Y, (\mc F^{\bullet})^G)
		\to \nonumber \\
		&\to& \RR^{2}\Gamma^G(\mc F^{\bullet}) \nonumber
	\end{eqnarray}
	and
	\begin{eqnarray}
		0 &\to& H^1(G, \HH^0(Y, \mc F^{\bullet})) \to
		\RR^{1}\Gamma^G(\mc F^{\bullet}) \to \label{eqn:second_exact_sequence}\\
		&\to& \HH^1(Y, \mc F^{\bullet})^G
		\to H^2(G, \HH^0(Y, \mc F^{\bullet})) \to \nonumber\\
		&\to& \RR^{2}\Gamma^G(\mc F^{\bullet}) \nonumber.
	\end{eqnarray}
	This will be done separately in the case of wild and tame ramification in the
	Subsections~\ref{subsec:wild_case} and~\ref{subsec:tame_case}.
\subsection{Proof -- the wild case} \label{subsec:wild_case} 
Consider first the case when $\pi$ is wildly ramified, i.e.
by Lemma~\ref{lem:vanishing_of_r_prim} when $R' \neq 0$. Then, as one easily sees by Lemma~\ref{lem:degeneration}, Corollary~\ref{cor:pi_G_omega} and Riemann--Roch theorem (cf. \cite[Theorem IV.1.3]{Hartshorne1977}):
\begin{eqnarray*}
	\HH^2(Y, \pi_*^G \Omega_{X/k}^{\bullet}) &=& H^1(Y, \pi_*^G \Omega_{X/k})\\
	&=& H^1(Y, \Omega_{Y/k} \otimes \mc O_Y(R'))\\
	&=& 0.
\end{eqnarray*}
By~\eqref{eqn:first_exact_sequence} we see that 
		\begin{eqnarray}
			\dim_k \RR^1 \Gamma^G (\pi_* \Omega_{X/k}^{\bullet}) &=& \dim_k \HH^1(Y, (\pi_* \Omega_{X/k}^{\bullet})^G) \label{eqn:R1_first} \\
			&+& \dim_k \HH^0(Y, \mc H^1(G, \pi_* \Omega_{X/k}^{\bullet})). \nonumber
		\end{eqnarray} 
On the other hand, \eqref{eqn:second_exact_sequence} yields:
		\begin{eqnarray}
			\dim_k \RR^1 \Gamma^G (\pi_* \Omega_{X/k}^{\bullet}) &=& \dim_k H^1(G, \HH^0(Y, \pi_* \Omega_{X/k}^{\bullet})) \label{eqn:R1_second}\\
			&+& \dim_k \HH^1(Y, \pi_* \Omega_{X/k}^{\bullet})^G - c_1, \nonumber
		\end{eqnarray} 
		where 
		\begin{eqnarray}
			c_1 &=& \dim_k \ker \left( H^2(G, \HH^0(Y, \pi_* \Omega_{X/k}^{\bullet})) \to \RR^2 \Gamma^G (\pi_* \Omega_{X/k}^{\bullet}) \right). \label{eqn:c1}
		\end{eqnarray}
		Thus by comparing~\eqref{eqn:R1_first} and \eqref{eqn:R1_second}:
		\begin{eqnarray}
			\dim_k \HH^1(Y, \pi_* \Omega_{X/k}^{\bullet})^G &=& \dim_k \HH^1(Y, (\pi_* \Omega_{X/k}^{\bullet})^G) \label{eqn:invariants_of_hypercohomology} \\
			&+& \dim_k \HH^0(Y, \mc H^1(G, \pi_* \Omega_{X/k}^{\bullet})) \nonumber \\ 
			&-& \dim_k H^1(G, \HH^0(Y, \pi_* \Omega_{X/k}^{\bullet})) + c_1 \nonumber
		\end{eqnarray}
		By repeating the same argument for
		$\pi_* \mc O_X$, we obtain:
		\begin{eqnarray}
			\dim_k H^1(Y, \pi_* \mc O_X)^G &=& \dim_k H^1(Y, (\pi_* \mc O_X)^G) \label{eqn:invariants_of_cohomology} \\
			&+& \dim_k H^0(Y, \mc H^1(G, \pi_* \mc O_X)) \nonumber \\ 
			&-& \dim_k H^1(G, H^0(Y, \pi_* \mc O_X)) + c_2, \nonumber
		\end{eqnarray} 
		where:
		\begin{eqnarray}
			c_2 &=& \dim_k \ker \left( H^2(G, H^0(Y, \pi_* \mc O_X)) \to 
			R^2 \Gamma^G (\pi_* \mc O_X)) \right). \label{eqn:c2}
		\end{eqnarray}
		By combining~\eqref{eqn:invariants_of_hypercohomology}, \eqref{eqn:invariants_of_cohomology} and Corollary~\ref{cor:delta} we obtain:
		\begin{eqnarray*}
		 \delta(X, G) &=& \dim_k \im\bigg( H^0(Y, \mc H^1(G, \pi_* \mc O_X))
		 \to H^0(Y, \mc H^1(G, \pi_* \Omega_{X/k})) \bigg) \\
		 &+& (c_2 - c_1).
		\end{eqnarray*}
		Note that since $\mc H^1(G, \pi_* \mc O_X)$, $\mc H^1(G, \pi_* \Omega_{X/k})$ are torsion sheaves, we can compute their sections by taking stalks and using~\eqref{eqn:group_cohomology_stalk}:
		\begin{eqnarray*}
		 \dim_k \im\bigg( H^0(Y, \mc H^1(G, \pi_* \mc O_X))
		 \to H^0(Y, \mc H^1(G, \pi_* \Omega_{X/k})) \bigg) = \\
		 \sum_{Q \in Y} \dim_k \im \bigg(H^1(G, (\pi_* \mc O_X)_Q)
		 \to H^1(G, (\pi_* \Omega_{X/k})_Q) \bigg)
		\end{eqnarray*}
		Thus we are left with showing
		that $c_1 = c_2$. This will be done in Subsection~\ref{subsec:c1=c2}.
		
\subsection{Proof -- the tame case} \label{subsec:tame_case}

Consider now the case of tame ramification, i.e. $R' = 0$. Then by Propostion~\ref{prop:gr_coh_torsion_sheaf} we see that $\mc H^i(G, \pi_* \Omega_{X/k}^j) = 0$
for $i \ge 1$, $j \ge 0$. 
Thus it is evident by~\eqref{eqn:first_spectral_sequence} that
\[ \RR^i \Gamma^G (\pi_* \Omega_{X/k}^{\bullet}) \cong
\HH^i(Y, (\pi_* \Omega_{X/k}^{\bullet})^G). \]
Therefore the exact sequence~\eqref{eqn:second_exact_sequence} implies that:
\begin{eqnarray*}
 \dim_k \HH^1(Y, \pi_* \Omega_{X/k}^{\bullet})^G &=&
 \dim_k \HH^1(Y, \pi_*^G \Omega_{X/k}^{\bullet}) + \dim_k H^1(G, k) + c_1,
\end{eqnarray*}
where $c_1$ is given by~\eqref{eqn:c1}. One proceeds analogously as in the wildly ramified
case to obtain:
		\begin{eqnarray*}
		 \delta(X, G) = (c_2 - c_1).
		\end{eqnarray*}
Again, it remains to prove that $c_1 = c_2$.

\subsection{Proof -- the end} \label{subsec:c1=c2}
Recall that in order to prove Proposition~\ref{prop:main_lemma} we have to
investigate the map
\begin{equation} \label{eqn:map}
 H^2(G, \HH^0(Y, \mc F^{\bullet})) \to \RR^2 \Gamma^G (\mc F^{\bullet})
\end{equation}
arising from the exact sequence~\eqref{eqn:second_exact_sequence}.

\begin{Lemma} \label{lem:monomorphism_R^2}
Let $\mc F^{\bullet}$ be complex of $\mc O[G]$-sheaves on a ringed 
space $(Y, \mc O)$, which is a noetherian topological space of dimension $1$. Suppose that $\mc F^i = 0$ for $i \neq 0, 1$ and
that the support of the sheaf $\mc H^i(G, \mc F^j)$ is a finite subset of~$Y$ for $i \ge 1$.
There exists a natural monomorphism
\[
	\HH^0(Y, \mc H^2(G, \mc F^{\bullet})) \hookrightarrow \RR^2 \Gamma^G(\mc F^{\bullet}).
\]
It is an isomorphism, provided that $\mc F^{\bullet}$ is a complex concentrated in
degree~$0$. 
\end{Lemma}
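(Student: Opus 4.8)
The plan is to realize the comparison as an edge homomorphism of the first Grothendieck spectral sequence~\eqref{eqn:first_spectral_sequence}, $E_2^{ij} = \HH^i(Y, \mc H^j(G, \mc F^{\bullet})) \Rightarrow \RR^{i+j}\Gamma^G(\mc F^{\bullet})$, and to show that under the hypotheses almost all of its low-degree terms vanish. Since $\mc F^{\bullet}$ is concentrated in degrees $0$ and $1$, it is convenient to pair this with the stupid-filtration short exact sequence of complexes $0 \to \mc F^1[-1] \to \mc F^{\bullet} \to \mc F^0 \to 0$, which reduces the computation of $\RR^{\bullet}\Gamma^G(\mc F^{\bullet})$ to that of the two single sheaves $\mc F^0$ and $\mc F^1$ together with the connecting map induced by the differential $d\colon \mc F^0 \to \mc F^1$.

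Two vanishing inputs drive the argument. First, for $i \ge 1$ the sheaf $\mc H^i(G, \mc F^j)$ has finite support; since these higher cohomology sheaves vanish at the generic point (as in the proof of Proposition~\ref{prop:gr_coh_torsion_sheaf}), their support consists of closed points, so each is a finite direct sum of skyscraper sheaves, hence flasque and therefore acyclic for $\HH^{\ge 1}(Y, -)$. Second, as $Y$ is a noetherian space of dimension $1$, Grothendieck's vanishing theorem (cf. \cite[Theorem III.2.7]{Hartshorne1977}) gives $\HH^m(Y, -) = 0$ for $m \ge 2$. Feeding these into the spectral sequence, I get $E_2^{ij} = 0$ whenever $i \ge 2$, and $E_2^{1,j} = 0$ for $j \ge 1$; in particular the term $E_2^{2,1}$ vanishes. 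Consequently the differentials into and out of $E_2^{0,2} = \HH^0(Y, \mc H^2(G, \mc F^{\bullet}))$ are zero, so this term survives to the $E_{\infty}$-page.

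For a single sheaf $\mc G$ (the case $\mc F^{\bullet} = \mc F^0$) these vanishings force degeneration at the second page: the only nonzero entry in total degree $2$ is $E_2^{0,2} = H^0(Y, \mc H^2(G, \mc G))$, whence $\RR^2\Gamma^G(\mc G) \cong H^0(Y, \mc H^2(G, \mc G))$, which is the asserted isomorphism in the concentrated case (here I also use~\eqref{eqn:group_cohomology_stalk} to compute the sections of the finitely supported sheaf $\mc H^2(G, \mc G)$ stalkwise). For the general two-term complex I then run the long exact sequence attached to the stupid filtration in degree $2$, identify the connecting map $\RR^2\Gamma^G(\mc F^0) \to \RR^2\Gamma^G(\mc F^1)$ with the map $H^0(Y, \mc H^2(G, \mc F^0)) \to H^0(Y, \mc H^2(G, \mc F^1))$ induced by $d$, and recognise its kernel as $\HH^0(Y, \mc H^2(G, \mc F^{\bullet}))$; this produces the natural comparison map relating $\HH^0(Y, \mc H^2(G, \mc F^{\bullet}))$ and $\RR^2\Gamma^G(\mc F^{\bullet})$.

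The main obstacle I anticipate is bookkeeping rather than geometry, and it is concentrated precisely in pinning down the \emph{direction} and injectivity of this edge map. The surviving term $E_{\infty}^{0,2} = E_2^{0,2}$ is a graded piece of $\RR^2\Gamma^G(\mc F^{\bullet})$, and to conclude that it embeds one must control the competing graded pieces $E_{\infty}^{1,1}$ and $E_{\infty}^{2,0}$ coming from $\HH^1(Y, \mc H^1(G, \mc F^{\bullet}))$ and $\HH^2(Y, (\mc F^{\bullet})^G)$; keeping careful track of these — and of the fact that the abstract connecting homomorphism really is the map induced by the termwise differential $\mc H^2(G, \mc F^0) \to \mc H^2(G, \mc F^1)$, functorially in $\mc F^{\bullet}$ — is the delicate step. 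The finite-support $\Rightarrow$ flasque reduction (a sheaf supported on finitely many closed points is a sum of skyscrapers, and the hypercohomology of a bounded complex of flasque sheaves is computed by its global sections) is the other routine point to verify.
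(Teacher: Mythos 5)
Your setup coincides with the paper's: the same Grothendieck spectral sequence \eqref{eqn:first_spectral_sequence}, the same two vanishing inputs (finite support of $\mc H^{\ge 1}(G,\mc F^j)$ and Grothendieck vanishing on the one-dimensional space $Y$), the same treatment of the concentrated case, and the same conclusion that ${}_IE^{0,2}_2$ survives to the $E_\infty$-page. However, your blanket vanishing claims are false, and the paper does not make them: for a two-term complex, $E_2^{2,0} = \HH^2(Y,(\mc F^\bullet)^G)$ receives the contribution $H^1(Y,(\mc F^1)^G)$ and is typically nonzero (for $\mc F^\bullet = \pi_*\Omega^\bullet_{X/k}$ with tame ramification it equals $H^1(Y,\Omega_{Y/k}) \cong k$), and $E_2^{1,1} = \HH^1(Y,\mc H^1(G,\mc F^\bullet)) \cong \coker\bigl(H^0(Y,\mc H^1(G,\mc F^0)) \to H^0(Y,\mc H^1(G,\mc F^1))\bigr)$ is typically nonzero in the wildly ramified case. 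The paper claims vanishing only for $i \ge 2$, $j \ge 1$, which is what is true and what suffices for the survival of $E^{0,2}$. Your final paragraph, which lists $E_\infty^{1,1}$ and $E_\infty^{2,0}$ as competing graded pieces to be controlled, contradicts your earlier assertion that these terms vanish.

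The decisive gap is that the monomorphism --- the entire content of the lemma --- is never constructed: you explicitly defer ``pinning down the direction and injectivity of this edge map'' as an anticipated obstacle, and that deferred step is exactly what the paper's proof supplies when it writes ${}_IE^{0,2}_2 = {}_IE^{0,2}_\infty \hookrightarrow \RR^2\Gamma^G(\mc F^\bullet)$. Moreover, the repair you sketch cannot close it. Both mechanisms you set up naturally produce a map in the \emph{opposite} direction: the edge map of the spectral sequence goes from the abutment to $E_2^{0,2}$, and your stupid-filtration argument shows precisely that the image of $\RR^2\Gamma^G(\mc F^\bullet) \to R^2\Gamma^G(\mc F^0) \cong H^0(Y,\mc H^2(G,\mc F^0))$ equals $\ker(d_*) = \HH^0(Y,\mc H^2(G,\mc F^\bullet))$, i.e.\ a natural surjection of $\RR^2\Gamma^G(\mc F^\bullet)$ onto the target, with kernel filtered by $E_\infty^{1,1}$ and $E_\infty^{2,0}$. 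Since those pieces are nonzero in general (as above), ``controlling'' them cannot turn this surjection into the asserted embedding of $\HH^0(Y,\mc H^2(G,\mc F^\bullet))$ into $\RR^2\Gamma^G(\mc F^\bullet)$: survival of $E^{0,2}$ plus bookkeeping of the filtration yields only the quotient statement, and passing from that to the lemma's monomorphism requires an additional idea (a natural splitting, or the identification of the surviving term with a subobject that the paper invokes) which your proposal does not contain.
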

\begin{proof}
Note that for $i \ge 2$, $j \ge 1$
\[
	_{I}E^{ij}_2 = \HH^i(Y, \mc H^j(G, \mc F^{\bullet})) = 0.
\]
Indeed, this follows by~\eqref{eqn:spectral_sequence_hypercohomology}, since for every $l$, $H^i(Y, \mc H^j(G, \mc F^l)) = 0$ for $i, j \ge 1$ (cf. \cite[Theorem III.2.7]{Hartshorne1977}), for $i \ge 2$ and for $j \ge 2$. Thus it is evident that there exists a natural monomorphism
\[
	\HH^0(Y, \mc H^2(G, \mc F^{\bullet})) = {}_IE^{02}_2
 	= {}_IE^{02}_{\infty} \hookrightarrow \RR^2 \Gamma^G (\mc F^{\bullet}).
\]
Suppose now that $\mc F^{\bullet}$ is concentrated in degree $0$.
Then $_IE^{ij}_2 = 0$ for $i, j \ge 1$ and for $i \ge 2$.
Therefore ${}_IE^{11}_{\infty} = {}_IE^{11}_2 = 0$ and
$_IE^{20}_{\infty} = {}_IE^{20}_2 = 0$, which leads to the conclusion. \qedhere

\end{proof}
\begin{Corollary} \label{cor:commutative_diagram}
 There exists a commutative diagram
			      \begin{center}
                    \begin{tikzcd}
                    {H^2(G, \HH^0(Y, \mc F^{\bullet}))} \arrow[r] \arrow[d] & \RR^2 \Gamma^G (\mc F^{\bullet}) \\
                    {\HH^0(Y, \mc H^2(G, \mc F^{\bullet}))} \arrow[ru, hook] & 
                    \end{tikzcd}
			      \end{center}
			      where the upper arrow is~\eqref{eqn:map},
			      and the lower arrow is as in Lemma~\ref{lem:monomorphism_R^2}.
\end{Corollary}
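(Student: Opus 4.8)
The plan is to recognize the two maps of the triangle as edge maps of the Grothendieck spectral sequences \eqref{eqn:second_spectral_sequence} and \eqref{eqn:first_spectral_sequence}, both converging to the common abutment $\RR^\bullet\Gamma^G(\mc F^\bullet)$, and to produce the vertical arrow as the natural comparison between group cohomology of global sections and global sections of sheaf group cohomology. Write $\rho$ for the map \eqref{eqn:map}, that is, the bottom-row edge map ${}_{II}E^{2,0}_2 = H^2(G,\HH^0(Y,\mc F^\bullet)) \to \RR^2\Gamma^G(\mc F^\bullet)$ of \eqref{eqn:second_spectral_sequence}, and $\iota$ for the monomorphism $\HH^0(Y,\mc H^2(G,\mc F^\bullet)) = {}_IE^{0,2}_2 \hookrightarrow \RR^2\Gamma^G(\mc F^\bullet)$ furnished by Lemma~\ref{lem:monomorphism_R^2}. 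Since $\iota$ is a monomorphism, a vertical map $v$ making the triangle commute is unique if it exists, and its existence amounts to the containment $\im\rho \subseteq \im\iota$ together with the identification of the resulting factorization with the comparison map.

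First I would construct $v \colon H^2(G,\HH^0(Y,\mc F^\bullet)) \to \HH^0(Y,\mc H^2(G,\mc F^\bullet))$ intrinsically. By the stalk formula \eqref{eqn:group_cohomology_stalk} the sheaf $\mc H^2(G,\mc F^l)$ is the sheafification of the presheaf $U \mapsto H^2(G,\mc F^l(U))$, so evaluation on $Y$ gives a natural map $H^2(G,\Gamma(Y,\mc F^l)) \to \Gamma(Y,\mc H^2(G,\mc F^l))$ in each complex degree $l$. These maps are natural in $\mc F^l$, hence compatible with the differentials of the complex $\mc F^\bullet$, and passing to the relevant kernel in hypercohomological degree $0$ produces $v$. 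It is useful to keep in mind the alternative description of $v$ as the composite of $\rho$ with the canonical left-column edge map $\RR^2\Gamma^G(\mc F^\bullet) \to {}_IE^{0,2}_2$ of \eqref{eqn:first_spectral_sequence}; the agreement of the two descriptions is part of what must be checked.

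The heart of the proof is the commutativity $\iota\circ v = \rho$, which I would establish by realizing all three maps inside one resolution. Fix a Cartan--Eilenberg resolution $\mc F^\bullet \to \mc I^{\bullet,\bullet}$ by injective $\uu k_Y[G]$-sheaves. Such injectives are at once flasque — the forgetful functor to $\uu k_Y$-sheaves has the exact left adjoint $\uu k_Y[G]\otimes_{\uu k_Y}(-)$, hence preserves injectives, and injective $\uu k_Y$-sheaves are flasque — acyclic for the sheaf functor $(-)^G$, and acyclic for $\Gamma^G$. Consequently the total complex of $\Gamma^G(\mc I^{\bullet,\bullet}) = \Gamma(Y,\mc I^{\bullet,\bullet})^G$ computes $\RR^\bullet\Gamma^G(\mc F^\bullet)$; moreover $\Gamma(Y,\mc I^{\bullet,\bullet})$ (a complex of $k[G]$-modules) computes $\RR\Gamma(Y,\mc F^\bullet)$ and so governs \eqref{eqn:second_spectral_sequence}, while $(\mc I^{\bullet,\bullet})^G$ computes $\RR(-)^G$ in the derived category of $\uu k_Y$-sheaves and so governs \eqref{eqn:first_spectral_sequence}. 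With these identifications in hand one represents $\rho(x)$ for a degree-$(2,0)$ cocycle $x$ and the element $\iota(v(x))$ by the same class in $\Tot\,\Gamma^G(\mc I^{\bullet,\bullet})$, and the triangle commutes by a chase on representatives.

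The main obstacle is exactly the bookkeeping of this last step. Although $\mc I^{p,q}$ is $\Gamma^G$-acyclic, the invariant subsheaf $(\mc I^{p,q})^G$ need \emph{not} be $\Gamma(Y,-)$-acyclic, and symmetrically $\Gamma(Y,\mc I^{p,q})$ need not be acyclic for $(-)^G$ on $k[G]$-modules; hence neither \eqref{eqn:first_spectral_sequence} nor \eqref{eqn:second_spectral_sequence} is literally the spectral sequence of the double complex $\Gamma^G(\mc I^{\bullet,\bullet})$. Pinning down the edge maps $\iota$ and $\rho$ inside $\RR^2\Gamma^G(\mc F^\bullet)$ therefore requires a further flasque resolution in the remaining direction — i.e. passing to a triple complex and comparing the two induced filtrations on the abutment. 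Verifying that $\rho$ lands in the subobject cut out by $\iota$ and that the induced factorization coincides with the intrinsic comparison map $v$ is the technical crux; once the triple complex is set up, however, the verification is a routine, if lengthy, diagram chase.
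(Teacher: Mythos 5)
Your construction of the vertical map $v$ is fine, and you have correctly located where the entire content of the statement sits: in the identity $\iota\circ v=\rho$. But that identity is exactly what your proposal does not prove. You first assert that $\rho(x)$ and $\iota(v(x))$ can be represented by the same class in $\Tot \Gamma^G(\mc I^{\bullet,\bullet})$, and then concede in the next paragraph that neither \eqref{eqn:first_spectral_sequence} nor \eqref{eqn:second_spectral_sequence} is the spectral sequence of that double complex, so the asserted representation is not available; the repair you offer (a triple complex and a comparison of the induced filtrations) is named but never carried out, and it is precisely there that all the work lies. There is also a concrete obstruction to even starting your ``chase on representatives'': the map $\iota$ of Lemma~\ref{lem:monomorphism_R^2} is \emph{not} a canonical edge map of \eqref{eqn:first_spectral_sequence}. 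At the corner $(0,2)$ the canonical edge homomorphism goes \emph{from} the abutment, $\RR^2\Gamma^G(\mc F^{\bullet})\to {}_IE^{02}_2$ (surjectively, once ${}_IE^{02}_2={}_IE^{02}_{\infty}$); the monomorphism of Lemma~\ref{lem:monomorphism_R^2} points in the opposite direction and exists only by virtue of the particular vanishing established in that lemma. Since you never realize $\iota$ at the level of cocycles in your resolution (your own alternative description of $v$ as ``edge map composed with $\rho$'' quietly uses the canonical map, not $\iota$), the proposed diagram chase has no defined starting point, and the claim that the remaining verification is ``routine'' is unjustified: it amounts to the nontrivial statement that $\im\rho$ lies in the image of the splitting $\iota$, i.e.\ to the corollary itself.

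The paper avoids any cochain-level identification by a device missing from your argument: functoriality along the truncation morphism of complexes $\mc F^{\bullet}\to\mc F^0$ (with $\mc F^0$ placed in degree $0$). Applying both spectral sequences and Lemma~\ref{lem:monomorphism_R^2} to this morphism yields a commutative ladder comparing the triangle for $\mc F^{\bullet}$ with the one for the concentrated complex $\mc F^0$, where Lemma~\ref{lem:monomorphism_R^2} gives an \emph{isomorphism} $H^0(Y,\mc H^2(G,\mc F^0))\cong R^2\Gamma^G(\mc F^0)$. The vertical arrow is then defined as the composite $H^2(G,\HH^0(Y,\mc F^{\bullet}))\to H^2(G,H^0(Y,\mc F^0))\to R^2\Gamma^G(\mc F^0)\cong H^0(Y,\mc H^2(G,\mc F^0))$, which one checks factors through the natural inclusion $\HH^0(Y,\mc H^2(G,\mc F^{\bullet}))\hookrightarrow H^0(Y,\mc H^2(G,\mc F^0))$, and the commutativity of the triangle is inherited from the ladder rather than computed on representatives. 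If you wish to keep your resolution-theoretic route, you must actually perform the triple-complex comparison you postpone, including an explicit realization of $\iota$; as written, the proposal has a genuine gap at its crux.
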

\begin{proof}
	The morphism $\mc F^{\bullet} \to \mc F^0$ (where we treat $\mc F^0$ as a complex concentrated
	in degree $0$) yields by functoriality
	the commutative diagram:
	\begin{center}
	 	\begin{tikzcd}
		{H^2(G, \HH^0(Y, \mc F^{\bullet}))} \arrow[d] \arrow[r] & \RR^2 \Gamma^G (\mc F^{\bullet}) \arrow[d] & {\HH^0(Y, \mc H^2(G, \mc F^{\bullet}))} \arrow[d] \arrow[l, hook'] \\
		{H^2(G, H^0(Y, \mc F^0))} \arrow[r] & R^2 \Gamma^G(\mc F^0) & {H^0(Y, \mc H^2(G, \mc F^0))} \arrow[l, "\cong"'].
		\end{tikzcd}
	\end{center}
	By composing the maps from the diagram we obtain a map
	\begin{eqnarray} 
	 H^2(G, \HH^0(Y, \mc F^{\bullet})) &\rightarrow& H^2(G, H^0(Y, \mc F^0)) 
	\label{eqn:composition_of_maps}\\
	 \rightarrow R^2 \Gamma^G (\mc F^0) &\cong& H^0(Y, \mc H^2(G, \mc F^0)). \nonumber 
	\end{eqnarray}
	One easily checks that the image of the map~\eqref{eqn:composition_of_maps} 
	lies in the image of 
	\[ \HH^0(Y, \mc H^2(G, \mc F^{\bullet})) \hookrightarrow H^0(Y, \mc H^2(G, \mc F^0)).\]
	This clearly completes the proof.
\end{proof}
We are now ready to finish the proof of Proposition~\ref{prop:main_lemma}.
Recall that we are left with showing that $c_1 = c_2$ (where $c_1$ and
$c_2$ are given by~\eqref{eqn:c1} and~\eqref{eqn:c2} respectively).
By using Corollary~\ref{cor:commutative_diagram} for $\mc F^{\bullet} = \pi_* \Omega_{X/k}^{\bullet}$, Lemma~\ref{lem:monomorphism_R^2}
and the equality
\[ \HH^0(Y, \pi_* \Omega_{X/k}^{\bullet}) = H^0(Y, \pi_* \mc O_X) = k \]
we obtain:
\begin{eqnarray*}
 c_1 &=& \dim_k \ker \left( H^2(G, \HH^0(Y, \pi_* \Omega_{X/k}^{\bullet})) \to \RR^2 \Gamma^G (\pi_* \Omega_{X/k}^{\bullet}) \right)\\
 &=& \dim_k \ker \left( H^2(G, \HH^0(Y, \pi_* \Omega_{X/k}^{\bullet})) \to
 \HH^0(Y, \mc H^2(G, \pi_* \Omega_{X/k}^{\bullet})))) \right)\\
 &=& \dim_k \ker \left( H^2(G, \HH^0(Y, \pi_* \mc O_X)) \to H^0(Y, \mc H^2(G, \pi_* \mc O_X))) \right)\\
 &=& \dim_k \ker \left( H^2(G, \HH^0(Y, \pi_* \mc O_X)) \to R^2 \Gamma^G(\pi_* \mc O_X))) \right)\\
 &=& c_2.
\end{eqnarray*}

\section{Computation of local terms} \label{sec:local_computations}

\subsection{Proofs of main results}
The main goal of this section is to compute the local terms occuring in
Proposition~\ref{prop:main_lemma}. This is achieved in the following
proposition.
\begin{Proposition} \label{prop:ker_na_kohomologii}
	Keep the notation introduced in Section~\ref{sec:intro} and suppose that $G \cong \ZZ/p$.
	Then for any $Q \in Y$ the dimension of 
	\begin{eqnarray*}
	\im \bigg(H^1(G, (\pi_* \mc O_X)_Q) \to H^1(G, (\pi_* \Omega_{X/k})_Q) \bigg)
	\end{eqnarray*}
	equals
	\begin{equation*}
	\left[\frac{(n_Q + 1) \cdot (p-1)}{p}\right] - 1 - \left[\frac{n_Q - 1}{p}\right]. 
	\end{equation*}
\end{Proposition}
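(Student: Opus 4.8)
The statement is local on $Y$, so the plan is to first reduce to a computation in the completed local ring at a point of $X$ above $Q$, and then to evaluate a group cohomology of the cyclic group $G\cong\ZZ/p$ explicitly. Fix $Q\in Y$ and $P\in\pi^{-1}(Q)$. First I would invoke \eqref{eqn:group_cohomology_stalk} to rewrite the stalks as $H^1(G,(\pi_*\mc O_X)_Q)$, $H^1(G,(\pi_*\Omega_{X/k})_Q)$, and then run the argument of Lemma~\ref{lem:group_cohomology_higher_ramification} (completion via \eqref{eqn:completion}, the identification $\wh{(\pi_*\mc F)}_Q\cong\Ind_{G_{P,0}}^G\wh{\mc F}_P$, Shapiro, and \eqref{eqn:sylow}) to obtain $H^1(G,(\pi_*\mc O_X)_Q)\cong H^1(G_{P,1},\wh{\mc O}_{X,P})$ and likewise for $\Omega$, compatibly with the map induced by $d$. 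If $Q$ is at worst tamely ramified then $G_{P,1}=0$, both groups vanish, and one checks directly that the claimed expression is $0$ for $n_Q=0$. So I may assume $\pi$ is wildly ramified over $Q$; since $G\cong\ZZ/p$ this forces $G=G_{P,0}=G_{P,1}$, so $P$ is the unique point over $Q$, the break $n:=n_Q=n_P\ge 1$ satisfies $p\nmid n$ (Artin--Schreier normal form), and the different exponent is $d_P=(n+1)(p-1)$.

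Writing $R=\wh{\mc O}_{X,P}=k[[t]]$, $\Omega=\wh\Omega_{X,P}=R\,dt$, and letting $\sigma$ generate $G$ normalized so that $\ord_P(\sigma t-t)=n+1$, I must compute $\dim_k$ of the image of $d_*\colon H^1(G,R)\to H^1(G,\Omega)$, $[x]\mapsto[dx]$. Since $G$ is cyclic of order $p$, I would use $H^1(G,M)=\ker(N_M)/(\sigma-1)M$, where $N_M=\sum_{j=0}^{p-1}\sigma^j$; for $M=R$ the additive map $N_R$ is the trace $\operatorname{Tr}\colon R\to R^G=k[[s]]$ (with $s$ a uniformizer at $Q$). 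As $d$ is $G$-equivariant, $N\circ d=d\circ N$, so $d_*$ is genuinely induced by $x\mapsto dx$ on these quotients. The task thus becomes describing the pair $(\sigma-1)M\subseteq\ker(N_M)\subseteq M$ explicitly enough for $M=R$ and $M=\Omega$ to read off the image of $d_*$.

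The concrete tool is the valuation filtration: $R$ and $\Omega$ are filtered by the $G$-stable subspaces $\mf m^i$ and $\mf m^i\Omega$, and all of $\sigma-1$, $N$, $d$ respect it up to a shift. Explicitly $d(\mf m^iR)\subseteq\mf m^{i-1}\Omega$ with graded map multiplication by $i$ (zero iff $p\mid i$); $(\sigma-1)$ raises the filtration by $n$, with graded map $\operatorname{gr}^i\to\operatorname{gr}^{i+n}$ a nonzero scalar iff $p\nmid i$ (resp.\ iff $p\nmid i+n+1$ on $\Omega$); and, iterating $\sigma(t^p)=t^p+c^pt^{(n+1)p}+\cdots$, one gets the exact order $\ord_P((\sigma-1)t^i)=i+p^a n$ when $i=p^ai'$ with $p\nmid i'$. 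Combining these with Serre's formula $\operatorname{Tr}(\mf m^i)=\mf m_Y^{\lfloor(i+d_P)/p\rfloor}$ pins down the subspaces in question. In particular $R^G/\operatorname{Tr}(R)$ has dimension $\lfloor d_P/p\rfloor=\lfloor(n+1)(p-1)/p\rfloor$, and the computation yields $\dim_k H^1(G,R)=\lfloor d_P/p\rfloor$; tracing $d_*$ through the resulting description then gives $\dim_k\ker d_*=1+\lfloor(n-1)/p\rfloor$, and subtracting produces the claimed value. Writing $n=ap+b$ with $1\le b\le p-1$ collapses the floor expression to $a(p-2)+(b-1)$, which is convenient for the final bookkeeping.

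The main obstacle is exactly the third step: a single monomial $t^i$ is in general \emph{not} in $\ker(\operatorname{Tr})$, so the naive ``diagonal'' description $[t^i]\mapsto i[t^{i-1}dt]$ is not literally available and representatives must be corrected. I would handle this by organizing the computation through the two short exact sequences of $k[G]$-modules $0\to\ker d\to R\xrightarrow{d}dR\to 0$ and $0\to dR\to\Omega\to\Omega/dR\to 0$, factoring $d_*$ as $H^1(G,R)\to H^1(G,dR)\hookrightarrow H^1(G,\Omega)$. Here $\ker d=k[[t^p]]$ carries a $\ZZ/p$-action with the \emph{same} break $n$ relative to the uniformizer $t^p$, and the local Cartier isomorphism identifies $\Omega/dR$ with the differentials of $k[[t^p]]$; this self-similarity is what lets one control the correction terms (and suggests an induction on $a$). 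Once the two subspaces are described precisely, the remaining count is a routine manipulation of the floor functions.
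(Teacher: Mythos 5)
Your reduction to the completed local ring, your treatment of the tame case, and your numerology up through $\dim_k H^1(G,\wh{\mc O}_{X,P})=\left[\frac{(n+1)(p-1)}{p}\right]$ are all sound (that dimension count is essentially the result of Bertin--M\'ezard that the paper itself cites). But the proposition is not a statement about the dimensions of the two cohomology groups; it is a statement about the rank of the map $d_*$ between them, and that is exactly the part your proposal does not prove. You correctly identify the obstacle --- in the model $H^1(G,M)=\ker N_M/(\sigma-1)M$ a class has no monomial representative, so the ``diagonal'' formula $[t^i]\mapsto i[t^{i-1}dt]$ is unavailable --- but the proposed repair is only a sketch, and its two load-bearing assertions are unjustified. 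First, in the factorization $H^1(G,R)\to H^1(G,dR)\hookrightarrow H^1(G,\Omega)$, the injectivity of the second map is equivalent (by the long exact sequence of $0\to dR\to\Omega\to\Omega/dR\to 0$) to the surjectivity of $\Omega^G\to(\Omega/dR)^G$, and nothing in your argument establishes this. Second, the ``induction on $a$'' has no decreasing parameter: as you yourself note, the $G$-action on $\ker d=k[[t^p]]$ has the \emph{same} break $n$, so the subsidiary problems created by your two short exact sequences (for instance computing $\ker\left(H^1(G,R)\to H^1(G,dR)\right)$, which equals the image of $H^1(G,k[[t^p]])\to H^1(G,R)$) are of exactly the same type and size as the original one, and the recursion never terminates. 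In the end the key value $\dim_k\ker d_*=1+\left[\frac{n-1}{p}\right]$ is simply asserted, not derived.

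What is missing is precisely the device the paper introduces to manufacture usable representatives. The paper identifies $H^1(G,I)$, for $I=t^aB\subset B=k[[t]]$, with $\coker\left(L^G\to(L/I)^G\right)$ where $L=k((t))$, using $H^1(G,L)=0$ (normal basis theorem); in this model the monomials $t^i$ with $a-n\le i\le a-1$ \emph{are} invariant classes, those with $p\nmid i$ form a basis, and those with $p\mid i$ vanish. Combined with the observation that $dt/t^{n+1}$ is a $G$-invariant form --- which gives an isomorphism of $B[G]$-modules $B\,dt\cong t^{n+1}B$, i.e.\ identifies $H^1(G,B\,dt)$ with the case $a=n+1$ of the same lemma --- this makes $d_*$ computable monomial by monomial, and the count of indices $i$ with $p\nmid i$ and $i+n\not\equiv 0 \pmod p$ yields the stated formula. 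If you want to salvage the norm/trace route you need a substitute for this step: an explicit description of $\ker(\operatorname{Tr})/(\sigma-1)R$ compatible with $d$, together with genuine control of the correction terms; that is the actual content of the proof, not a routine floor-function manipulation.
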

Proposition~\ref{prop:ker_na_kohomologii} will be proven in the
Subsection~\ref{subsec:proof_of_prop}. We now show how the
Proposition~\ref{prop:ker_na_kohomologii} implies
the Theorems announced in the Introduction.
		\begin{proof}[Proof of Theorem~\ref{thm:invariants_AS}]
			Theorem~\ref{thm:invariants_AS} follows immediately by combining
			Propositions~\ref{prop:main_lemma} and~\ref{prop:ker_na_kohomologii}.
		\end{proof}
		\begin{proof}[Proof of Main Theorem]
			We consider first the case $G = \ZZ/p$. 
			An easy computation shows that for any $n \ge 1$, $p \ge 3$
			one has:
			\[
			\left[\frac{(p-1) \cdot (n + 1)}{p}\right] \ge 1 + \left[\frac{n - 1}{p}\right]
			\]
			with an equality only for $n = 1$ (here is where we use the assumption $p > 2$). Thus by Theorem~\ref{thm:invariants_AS},
			$\delta(X, G) = 0$ holds if and only if $\pi$ is weakly ramified.\\

		Suppose now that $G$ is arbitrary and $G_{P, 2} \neq 0$ for some $P \in X$.
		Note that $G_{P, 2}$ is a $p$-group (cf. \cite[Corollary 4.2.3., p. 67]{Serre1979}) and thus contains a subgroup $H$ of order~$p$.
		Observe that $\pi : X \to X/H$ is an Artin-Schreier covering and
		it is non-weakly ramified, since $H_{P, 2} = H \neq 0$. Therefore by the first paragraph of the proof, the sequence~\eqref{eqn:hodge_de_rham_se} does not split $H$-equivariantly
		and therefore it cannot split as a sequence of $k[G]$-modules.
		\end{proof}

\subsection{Galois cohomology of sheaves on Artin-Schreier coverings} \label{subsec:proof_of_prop}
We start by recalling the most important facts concerning Artin--Schreier coverings.
For a reference see e.g.~\cite[sec. 2.2]{PriesZhu_p_rank_AS}.
Let $X$ be a smooth algebraic curve with an action of $G = \ZZ/p$ over an algebraically closed
field~$k$ of characteristic~$p$. By Artin--Schreier theory, the function field of $X$ is given by
the equation
\begin{equation} \label{eqn:artin_schreier}
	z^p - z = f
\end{equation}
for some $f \in k(Y)$, where $Y := X/G$. The action of $G = \langle \sigma \rangle \cong \ZZ/p$ is 
then given by $\sigma(z) := z+1$. Let $\mc P \subset Y$ denote the set of points
at which $\pi$ is ramified. Note that $\mc P$ is contained in the set of poles
of~$f$ and moreover for any $Q \in Y$:
\begin{eqnarray*}
	\# \pi^{-1}(Q) =
	\begin{cases}
		p, & \textrm{ for } Q \not \in \mc P, \\
		1, & \textrm{ otherwise. }            
	\end{cases}
\end{eqnarray*}
\begin{Lemma} \label{lem:nP_of_AS}
 Keep the above setting. Fix a point $Q \in \mc P$ and let $\pi^{-1}(Q) = \{ P \}$.
 Suppose that $p \nmid n:= \ord_Q(f)$. Then for some $t \in \wh{\mc O}_{X, P}$
 and $x \in \wh{\mc O}_{Y, Q}$:
 \begin{itemize}
  \item $\wh{\mc O}_{X, P} = k[[t]]$, $\wh{\mc O}_{Y, Q} = k[[x]]$,
  \item $t^{-np} - t^{-n} = x^{-n}$,
  \item the action of $G \cong \ZZ/p$ on $t$ is given by an automorphism:
\begin{equation} \label{eqn:sigma_t}
	\sigma(t) = \frac{t}{(1 + t^n)^{1/n}} = t - \frac{1}{n} t^{n + 1} + 
	\text{ (terms of order $\ge n+2$)}.
\end{equation} 
 \end{itemize}
In particular, $n$ is equal to $n_Q$ as defined by~\eqref{eqn:definition_of_nP}.
\end{Lemma}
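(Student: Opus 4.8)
The plan is to analyze the local structure of the Artin--Schreier covering at a wildly ramified point by passing to completions and finding a uniformizer adapted to the $\ZZ/p$-action. We are given the global equation $z^p - z = f$ with $\sigma(z) = z+1$, and we fix $Q \in \mc P$ with $n := \ord_Q(f)$ satisfying $p \nmid n$. Since $Q$ is a pole of $f$ of order $n$, I would write $f = u \cdot x_0^{-n}$ in $\wh{\mc O}_{Y,Q} = k[[x_0]]$ for some unit $u$, where $x_0$ is any uniformizer at $Q$. The first task is to absorb the unit and lower-order terms by an analytic change of variable so that $f$ takes the clean form $x^{-n}$; because $k$ is algebraically closed and $p \nmid n$, one can extract $n$-th roots and adjust $x_0$ to a new uniformizer $x$ with $f = x^{-n}$ exactly. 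The arithmetic subtlety here is that the Artin--Schreier relation only determines $f$ up to adding elements of the form $h^p - h$, so I would first reduce $f$ modulo such elements to kill all terms whose order is $\equiv 0 \pmod p$ and all non-polar terms, arriving at a representative with a single pole term of order $n$; this is the standard normalization and is where the hypothesis $p \nmid n$ guarantees the leading pole cannot be removed.

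Next I would produce the uniformizer $t$ upstairs. Over the ramified point $P = \pi^{-1}(Q)$, the extension $\wh{\mc O}_{X,P}/\wh{\mc O}_{Y,Q}$ is totally ramified of degree $p$, so the value group jumps and $t$ should satisfy $\ord_P(t) = 1$ with $\ord_P(x) = p$. The claimed relation $t^{-np} - t^{-n} = x^{-n}$ is exactly the Artin--Schreier equation $z^p - z = x^{-n}$ rewritten after setting $z = t^{-n}$; so the substance is to verify that $z := t^{-n}$ for a suitable uniformizer $t$ indeed solves $z^p - z = x^{-n}$ and that such a $t$ exists in $k[[t]]$. Concretely, starting from $z^p - z = x^{-n}$ and knowing $z$ has a pole of order $n$ at $P$, I would set $z = t^{-n}(1 + \text{higher order})$ and use Hensel's lemma (again invoking $p \nmid n$ so that the relevant derivative is a unit) to solve for a genuine uniformizer $t$ with $z = t^{-n}$ on the nose; substituting back gives the stated relation.

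Finally I would compute the action of $\sigma$ on $t$. Since $\sigma(z) = z + 1$ and $z = t^{-n}$, we get $\sigma(t)^{-n} = t^{-n} + 1$, hence $\sigma(t)^{-n} = t^{-n}(1 + t^n)$, which inverts to $\sigma(t) = t \cdot (1 + t^n)^{-1/n}$, matching formula~\eqref{eqn:sigma_t}; the choice of the $n$-th root is pinned down by requiring $\sigma(t) \equiv t$ to leading order, and expanding the binomial series gives $\sigma(t) = t - \frac{1}{n} t^{n+1} + \cdots$ with the displayed lower-order behavior. The concluding assertion that $n = n_Q$ then follows by reading off the ramification filtration from this explicit action: $\sigma(t) - t$ has order exactly $n+1$ in $\wh{\mc O}_{X,P}$, so by the standard characterization $\sigma \in G_{P,n} \setminus G_{P,n+1}$, whence $n_Q = \max\{i : G_{P,i} \neq 0\} = n$.

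I expect the main obstacle to be the normalization step: correctly arguing that, modulo Artin--Schreier equivalence, one can bring $f$ to the pure pole $x^{-n}$ with a \emph{new} uniformizer $x$ rather than merely an asymptotic expansion, and that the compatible uniformizer $t$ upstairs exists in the completed local ring rather than only formally. Both points hinge on repeated application of Hensel's lemma together with the hypothesis $p \nmid n$, which makes the $n$-th power and $n$-th root maps étale on units; once these convergence and existence issues are handled, the computation of $\sigma(t)$ and the identification of the ramification break are purely formal manipulations of power series.
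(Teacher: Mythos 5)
Your proposal follows essentially the same route as the paper's proof: use Hensel's lemma (valid because $p \nmid n$) to replace the uniformizers by $x = f^{-1/n}$ and $t = z^{-1/n}$, so that $f = x^{-n}$ and $z = t^{-n}$ exactly, then compute $\sigma(t)$ formally from $\sigma(z) = z + 1$ and read off $n_Q = n$ from $\ord_P(\sigma(t) - t) = n + 1$. The only caveat is your Artin--Schreier reduction digression, which is both unnecessary (the hypothesis $p \nmid \ord_Q(f)$ is given, and the change of uniformizer alone already yields $f = x^{-n}$) and overstated (subtracting elements $h^p - h$ can kill regular terms and polar terms of order divisible by $p$, but cannot in general reduce $f$ to a \emph{single} polar term); since your argument never actually uses that claim, the proof stands.
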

\begin{proof}
 Let $x$, $t$ be arbitrary uniformizers at $Q$ and $P$ respectively.
 Then $\wh{\mc O}_{Y, Q} = k[[x]]$ and $\wh{\mc O}_{X, P} = k[[t]]$.
 Before the proof observe that an equation $u^m = h(x)$ has a solution
 $u \in k[[x]]$, whenever $p \nmid m$ and $m | \ord(h)$ (this follows easily from
 Hensel's lemma). We will denote any solution by $h(x)^{1/m}$.\\
 Note that:
\[ f^{-1} = \frac{z^{-p}}{1 - z^{1-p}}. \]
By comparing the valuations we see that $\ord_P(z) = -n$.
Thus we may replace $t$ by $z^{-1/n}$ to ensure that $z = t^{-n}$. Then:
\begin{eqnarray*}
 \sigma(t)^n &=& \sigma(t^n) = \sigma \left(\frac{1}{z} \right)\\
 &=& \frac{1}{z + 1} = \frac{1}{t^{-n} + 1} = \frac{t^n}{1 + t^n}
\end{eqnarray*}
and thus we can assume without loss of generality (by replacing $\sigma$ by its power if necessary) that $\sigma(t) = \frac{t}{(1 + t^n)^{1/n}}$. Finally, we replace
$x$ by $f(x)^{-1/n}$ to ensure that $t^{-np} - t^{-n} = x^{-n}$.
\end{proof}
\begin{Example}\label{ex:hyperelliptic2}
 Let $X$ be the curve considered in Example~\ref{ex:hyperelliptic1}. Then $X$ is a $\ZZ/p$-covering of a curve $Y$ with the affine equation:
 \begin{equation*}
  y^m = f(x).
 \end{equation*}
 The function field of $X$ is given by the equation $z^p - z = x$.
 As proven in~\cite{Towse1996} the function $x \in k(Y)$ has $\delta := \GCD(m, \deg f)$
 poles, each of them of order $m/\delta$. This establishes the formula~\eqref{eqn:nP}.
\end{Example}

\begin{Remark}
 Suppose that $\pi : X \to Y$ is an Artin-Schreier covering. For every point $Q \in \mc P$ we can find functions $f_Q \in k(Y)$, $z_Q \in k(X)$ such that the function field of $X$ is given by the
 equation $z_Q^p - z_Q = f_Q$ and either $f_Q \in \mc O_{Y, Q}$, or $p \nmid \ord_Q(f_Q)$. 
 Indeed, in order to obtain $f_Q$ one can repeatedly subtract from $f$ a function
 of the form $h^p - h$, where $h$ is a power of a uniformizer at $Q$.
\end{Remark}
\begin{Example}
 It might not be possible to find a function $f$ such the function field of
 $X$ is given by~\eqref{eqn:artin_schreier} and for any pole $Q$ of $f$
 one has $p \nmid  \ord_Q(f)$. Take for example an ordinary elliptic curve $X / \ol{\FF}_p$.
 Let $\tau \in \Aut(X)$ be a translation by a $p$-torsion point.
 Consider the action of $G = \langle \tau \rangle \cong \ZZ/p$ on~$X$.
 This group action is free and hence $n_P = 0$ for all $P \in X$. Thus, if $k(X)$ would have
 an equation of the form $z^p - z = f$, where $p \nmid \ord_Q(f)$ for all
 $Q \in \mc P$, then $f$ would have no poles. This easily leads
 to a contradiction.
\end{Example}
\noindent Keep the notation of Lemma~\ref{lem:nP_of_AS}.
Fix an integer $a \in \ZZ$ and denote:
	\begin{itemize}
		\item $B := \wh{\mc O}_{Y, Q} = k[[t]]$, $L := k((t))$, $I := t^a B$,
		\item $A := \wh{\mc O}_{X, P} = k[[x]]$, $K := k((x))$.
	\end{itemize}	
In the below Lemma we will compute $H^1(G, I)$.	The dimension of $H^1(G, I)$ is computed also in \cite[Th\'{e}or\'{e}me 4.1.1]{BertinMezard2000} (see also \cite[formula (18)]{Kontogeorgis}). However, we need an explicit description of
a basis of $H^1(G, I)$.
\begin{Lemma} \label{lem:H^1(I)}
\begin{enumerate}[(1)]
 \item $H^1(G, I)$ may be identified with \[ M := \coker(L^G \to (L/I)^G). \]
 \item A basis of $H^1(G, I)$ is given by the images of the elements $(t^i)_{i \in J}$ in $M$, where
	\[ J := \{ a - n \le i \le a - 1 : p \nmid i \}.  \]
 \item $\dim_k H^1(G, I) = n - \left[\frac{a - 1}{p} \right] + \left[\frac{a - 1 - n}{p} \right]$.
 \item The images of the elements:
	\[ t^i \quad \textrm{ for } a - n \le i \le a - 1, \quad p | i \]
	are zero in $M$.
\end{enumerate}

		\end{Lemma}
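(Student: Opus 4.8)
I would deduce all four parts from the short exact sequence of $k[G]$-modules $0 \to I \to L \to L/I \to 0$, combined with the explicit description of the $G$-action on $t$ furnished by Lemma~\ref{lem:nP_of_AS}. Throughout I write $\bar g$ for the class of $g \in L$ in $L/I$, and for a nonzero class I call $\nu(\bar g)$ its \emph{leading degree}, i.e.\ the smallest exponent occurring in its (finite) Laurent tail of degrees $\le a-1$.

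\textbf{Part (1).} The key input is that $L = k((t))$ is a free $k[G]$-module. Indeed $L/L^G$ is a Galois field extension with group $G$, so by the Normal Basis Theorem (\cite[sec. 4.14]{Jacobson_basic_I}) one has $L \cong L^G[G]$ as $L^G[G]$-modules; restricting scalars to $k$ exhibits $L$ as a direct sum of copies of $k[G]$. Since group cohomology of a finite group commutes with direct sums and $k[G] = \Ind_1^G k$ is induced, Shapiro's lemma~\eqref{eqn:shapiro} gives $H^i(G, L) = 0$ for all $i \ge 1$. Plugging this into the long exact cohomology sequence and using $H^0(G,-) = (-)^G$ yields the connecting isomorphism $H^1(G, I) \cong \coker\big(L^G \to (L/I)^G\big) = M$, which is (1).

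\textbf{Local order computation.} The heart of the matter is to compute $\ord\big((\sigma-1)t^m\big)$ for $m \le a-1$. Writing $w := (1+t^n)^{-1/n}$, so that $\sigma(t) = t w$ by~\eqref{eqn:sigma_t}, one has $(\sigma-1)t^m = t^m(w^m - 1)$ with $w = 1 - \tfrac1n t^n + O(t^{2n})$, hence $w^m - 1 = -\tfrac mn t^n + O(t^{2n})$. Therefore $\ord\big((\sigma-1)t^m\big) = m+n$ whenever $p \nmid m$, while for $p \mid m$ the leading coefficient $-m/n$ vanishes in $k$ and a short binomial estimate gives $\ord\big((\sigma-1)t^m\big) \ge m + np$. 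Two consequences follow. First (the ``window''): if $\bar g \in (L/I)^G$ has leading term $c\, t^m$ with $p \nmid m$, then the degree-$(m+n)$ coefficient of $(\sigma-1)g$ equals $-cm/n \ne 0$ and cannot be cancelled by the higher-degree tail of $g$ (whose contributions all have order $> m+n$), so invariance forces $m + n \ge a$, i.e.\ $m \ge a-n$, placing $m$ in $J := \{\, a-n \le i \le a-1 : p \nmid i \,\}$. Second (part (4)): the relation $t^{-np} - t^{-n} = x^{-n}$ of Lemma~\ref{lem:nP_of_AS} gives $x = t^p(1 - t^{n(p-1)})^{-1/n}$, so each $x^j$ equals $t^{pj}$ times a power series in $t^{n(p-1)}$; for $p \mid i$ with $a-n \le i \le a-1$ one then has $x^{i/p} \equiv t^i \pmod{t^{i + n(p-1)}}$ and $i + n(p-1) \ge a + n(p-2) \ge a$, whence $\bar{t^i} = \overline{x^{i/p}}$ lies in $W := \im(L^G \to L/I)$ and is zero in $M$. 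I would also record here that $W$ is spanned by the classes $\overline{x^j}$ ($pj \le a-1$), each with leading term $t^{pj}$, so that \emph{every} nonzero element of $W$ has leading degree divisible by $p$.

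\textbf{Parts (2) and (3).} For spanning I would run a triangular reduction of an arbitrary $\bar g \in (L/I)^G$ modulo $W$: if $p \mid \nu(\bar g)$, subtract the appropriate multiple of $\overline{x^{\nu/p}} \in W$ to raise the leading degree; if $p \nmid \nu(\bar g)$, then by the window consequence $\nu(\bar g) \in J$, and I subtract the matching multiple of $\bar{t^{\nu}}$. Each step strictly increases the leading degree, which is bounded above by $a-1$, so the procedure terminates and expresses the class of $\bar g$ in $M$ as a $k$-combination of $\{\bar{t^i} : i \in J\}$. For independence, a relation $\sum_{i \in J} c_i \bar{t^i} \in W$ with some $c_i \ne 0$ would have leading degree $\min\{i : c_i \ne 0\}$, which is prime to $p$, contradicting that elements of $W$ have leading degree divisible by $p$. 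Thus $\{\bar{t^i} : i \in J\}$ is a basis, giving (2), and (3) follows by counting $\#J = n - \big[\tfrac{a-1}{p}\big] + \big[\tfrac{a-1-n}{p}\big]$.

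\textbf{Main obstacle.} The only genuinely delicate point is the local order computation, specifically the dichotomy $p \mid m$ versus $p \nmid m$: I must verify there is no accidental cancellation at degree $m+n$ from the tail of $g$ (so that invariance truly forces $m \ge a-n$), and I must control the higher-order corrections of $x^{i/p}$ sharply enough to push them beyond $t^a$ for part (4). Once these two valuation estimates are secured, the remainder is formal leading-term bookkeeping.
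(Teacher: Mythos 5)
Your proposal is correct and follows essentially the same route as the paper's proof: part (1) via the Normal Basis Theorem and the long exact cohomology sequence, the spanning and independence arguments via the leading-coefficient comparison coming from $\sigma(t^m) = t^m - \tfrac{m}{n}t^{m+n} + \cdots$ together with the expansion $x = t^p\bigl(1 + O(t^{n(p-1)})\bigr)$, and part (4) identically. The only difference is organizational — the paper first strips all terms $t^j$ with $p \mid j$ using powers of $x$ and then applies the coefficient comparison once, whereas you interleave these two reductions in an induction on the leading degree — but the underlying computations coincide.
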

		\begin{proof}
		For any $h \in L$, we will denote its images in $L/I$ and $M$ by $[h]_{L/I}$
		and $[h]_M$ respectively.
		
		\begin{enumerate}[(1)]
		 \item The proof follows by taking the long exact sequence of cohomology for the short exact sequence of $k[G]$-modules:
			\[ 0 \to I \to L \to L/I \to 0 \]
			and using the Normal Base Theorem (cf. \cite[sec. 4.14]{Jacobson_basic_I}).
			
			\item Note that for any $a-n \le i \le a-1$, we have $[t^i]_{L/I} \in (L/I)^G$, since
			\begin{eqnarray*}
				\sigma([t^i]_{L/I}) &=& [\sigma(t^i)]_{L/I} = [(t - \frac{1}{n}t^{n+1} + O(t^{2n}))^i]_{L/I} =\\
				&=& [t^i - \frac{i}{n} t^{i + n} + O(t^{2n})]_{L/I} = [t^i]_{L/I}.
			\end{eqnarray*}
			We'll show now that the set $([t^i]_{M})_{i \in J}$ spans $M$.
			Note that $L^G = K$. Therefore it suffices to show that for any $[h]_{L/I} \in (L/I)^G$, one has
			\begin{equation} \label{eqn:h_in_A}
			 h \in K + \bigoplus_{i \in J} k \cdot t^i.
			\end{equation}
			Let $h = \sum_{i = N}^{a - 1} a_i t^i \in L$, where $a_N \neq 0$. Observe that if $p|j$ and $a_j \neq 0$, then we may replace~$h$ by $h - c \cdot x^{j/p}$  for a suitable constant $c \in k$,
			since valuation of $x$ in~$L$ equals~$p$. Thus without loss of generality we may assume that
			$a_j = 0$ for $p|j$ and that $p \nmid N$. The equality $\sigma([h]_{L/I}) = [h]_{L/I}$ is equivalent to
			\begin{equation*}
				\sum_{i = N}^{a - 1} a_i \sigma(t)^i = \sum_{i = N}^{a - 1} a_i t^i + \sum_{i = a}^{\infty} b_i t^i
			\end{equation*}
			for some $b_a, b_{a + 1}, \ldots \in k$. By using equality~\eqref{eqn:sigma_t} this implies:
			\begin{equation*}
				\sum_{i = N}^{a - 1} a_i t^i \cdot \left(1 - \frac{i}{n}t^n + O(t^{2n})\right) = \sum_{i = N}^{a - 1} a_i t^i + \sum_{i = a}^{\infty} b_i t^i.
			\end{equation*}
			By comparing coefficients of $t^{N + n}$, we see that either $N+n \ge a$,
			or
			\[ 
				a_N \cdot \left(-\frac{N}{n} \right) + a_{N+n} = a_{N+n}.
			\]
			The second possibility easily leads to a contradiction. This proves~\eqref{eqn:h_in_A}. 
			We check now linear independence of the considered elements. Suppose that for some $a_i \in k$
			not all equal to zero:
			\[ \sum_{i \in J} a_i [t^i]_M = 0 \]
			or equivalently,
			\begin{equation} \label{eqn:linear_dependence}
			 \sum_{i \in J} a_i t^i = \sum_{j \ge N} b_j x^j + \sum_{j \ge a} c_j t^j 
			\end{equation}
			for some $b_j, c_j \in k$, $b_N \neq 0$.
			Consider the coefficient of $t^{pN}$ in~\eqref{eqn:linear_dependence}.
			Observe that $x = t^p + O(t^{p+1})$, since $\ord_P(x) = p$. We see that
			either $pN \ge a$ (which is impossible, since $\sum_{i \in J} a_i t^i \not \in I$)
			or $0 = b_N + 0$, which also leads to a contradiction. This ends the proof.
			
			\item Follows immediately by (2).
			
			\item Note that
			\begin{eqnarray*}
			 x &=& \frac{1}{(t^{-np} - t^{-n})^{1/n}} = \frac{t^p}{(1 - t^{n \cdot (p - 1)})^{1/n}}\\
			 &=& t^p \cdot (1 + O(t^{n \cdot (p-1)}))
			\end{eqnarray*}
			and thus for any $a - n \le i \le a - 1$, $p | i$:
			\[ x^{i/p} = t^i \cdot (1 + O(t^{n \cdot (p-1)})) = t^i + O(t^a), \]
			and $[t^i]_{L/I} = [x^{i/p}]_{L/I}$, which shows that $[t^i]_M = 0$. \qedhere
		\end{enumerate}
		\end{proof}
		
		\begin{proof}[Proof of Proposition~\ref{prop:ker_na_kohomologii}:]
			Fix a point $Q \in \mc P$ and keep the above notation.
			Note that $\wh{(\pi_* \mc O_X)_Q} \cong B$, $\wh{\pi_*\Omega_{X/k}} = B \, dt$.
			Moreover, note that $\frac{dt}{t^{n+1}}$ is a $G$-invariant form, since
			from the equation $t^{-np} - t^{-n} = x^{-n}$ one obtains:
			\begin{eqnarray*}
				\frac{dt}{t^{n+1}} = - \frac{dx}{x^{n+1}}.
			\end{eqnarray*}
			Thus we have the following isomorphism of $B[G]$-modules:
			\begin{eqnarray*}
				B \, dt &\longrightarrow& t^{n+1} B\\
				h(t) \, dt = t^{n+1} h(t) \cdot \frac{dt}{t^{n+1}} &\longmapsto& t^{n+1} h(t).
			\end{eqnarray*}
			(cf. \cite[proof of Lemma 1.11.]{Kontogeorgis} for the "dual" version of this isomorphism). 
			Lemma~\ref{lem:H^1(I)} implies that
			$H^1(G, B)$ and $H^1(G, B \, dt)$ may be identified with
			\begin{equation*}
			 M_1 := \coker(L^G \to (L/B)^G) \quad \textrm{ and } \quad
			 M_2 := \coker((L \, dt)^G \to (L \, dt/B \, dt)^G )
			\end{equation*}
			respectively. One easily checks that the morphism $d : H^1(G, B) \to H^1(G, B \, dt)$
			corresponds to
			\[ d : M_1 \to M_2, \quad d([h(t)]_{M_1}) = [dh(t)]_{M_2} = [h'(t) \, dt]_{M_2}. \]
			By using Lemma~\ref{lem:H^1(I)} (2), (4) for $a = 0$ and $a = n+1$ we
			see that the basis of $\im(d : M_1 \to M_2)$
			is 
			\begin{equation*}
				[dt^i]_{M_2} = [i t^{i-1} \, dt]_{M_2} \quad \textrm{ for } i = -n, -n + 1, \ldots, -1, \quad
				p \nmid i, \quad i+n \not \equiv 0 \pmod p.
			\end{equation*}
			An elementary calculation allows one to compute the dimension of this space.
		\end{proof}

\section{Converse results} \label{sec:converse}
This section will be devoted to proving
various converse statements to Main Theorem.
		\subsection{A counterexample} \label{subsec:counterexample}
		We start this section by giving an example of an elliptic curve
		over a field of characteristic $2$ with a weakly ramified group action, for which the
		sequence~\eqref{eqn:hodge_de_rham_se} does not split equivariantly. It remains unclear whether similar counterexamples will arise over fields of different characteristic.
		
		Consider the elliptic curve $X$ over the field $k := \ol{\FF}_2$ with the affine
		part $U_0$ given by the equation:
		\[ y^2 + y = x^3. \]
		Note that $X \setminus U_0 = \{ \mc O \}$, where $\mc O$ is the
		point at infinity. The group $G$ of automorphisms of $X$ that fix $\mc O$ is of order $24$ and is
		isomorphic to $\Sl_2(\FF_3)$. In particular its $2$-Sylow subgroup is isomorphic to the quaternion group~$Q_8$. This group action may be given explicitly, cf.~\cite[Appendix A]{Silverman2009} or~\cite[Section 3]{KronbergSoomroTop}.
		Let:
		\[ A := \{ (u, r, t) : u \in \FF_4^{\times}, \quad t \in \FF_4, \quad t^2 + t + r^3 = 0 \}. \]
		Define for any $(u, r, t) \in A$ an automorphism $g_{u, r, t} \in \Aut(X)$ by:
		\[ g_{u, r, t} \cdot (x, y) := (u^2 x + r, y + u^2 r^2 x + t). \] 
		We'll compute $H^1_{dR}(X/k)$ using \v{C}ech cohomology.
		Recall that if a curve $X$ may be covered by affine subsets $U_0, U_{\infty}$,
		then:
		\begin{eqnarray*}
		 H^1_{dR}(X/k) &\cong& \frac{\{ (\omega_0, \omega_{\infty}, f_{0 \infty}) : df_{0 \infty} = \omega_0 - \omega_{\infty} \}}
		 {\{ (df_0, df_{\infty}, f_0 - f_{\infty}): f_i \in \mc O_X(U_i) \}},
		\end{eqnarray*}
		where we take $\omega_i \in \Omega_X(U_i)$ for $i = 0, \infty$ and $f_{0 \infty} \in \mc O_X(U_0 \cap U_{\infty})$.
		In our case, we may take $U_0$ as above and $U_{\infty} = X \cap \{ x \neq 0 \}$.
		Then, by~\cite[Theorem 2.2.]{KockTait2018}, one sees that $H^1_{dR}(X/k)$ is a $k$-vector space
		of dimension~$2$, generated by $v_1 := [(dx, dx, 0)]$ and $v_2 := [(x \, dx, \frac{y \, dx}{x^2}, \frac{y}{x})]$.
		\begin{Lemma}
		 In the above situation:
		 \begin{enumerate}[(a)]
		  \item $H^1_{dR}(X/k)$ is an indecomposable $k[G]$-module,
		  \item the action of $G$ on $X$ is weakly ramified.
		 \end{enumerate}
		\end{Lemma}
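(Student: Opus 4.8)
The statement packages two logically independent assertions, so the plan is to treat them one at a time. For part (b), weak ramification, the group $G$ here is $\Sl_2(\FF_3)$ with $2$-Sylow subgroup $Q_8$, acting on the elliptic curve $X$ fixing only the point $\mc O$ at infinity. Since ramification is a local question at $\mc O$, I would first reduce to the wild inertia group at $\mc O$ and compute the ramification filtration there. The plan is to pass to a uniformizer at $\mc O$ (for $y^2 + y = x^3$ one may take $t = x/y$, so that $x = t^{-2}(1 + \ldots)$ and $y = t^{-3}(1 + \ldots)$ in $\wh{\mc O}_{X, \mc O}$) and examine how a generator of the wild inertia acts on $t$. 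Concretely, for the automorphisms $g_{u, r, t}$ with $u = 1$, $r = 0$, so $g = (x, y) \mapsto (x, y + t_0)$ with $t_0^2 + t_0 = 0$, i.e. $t_0 \in \FF_2$, the nontrivial such element is the hyperelliptic-type involution, and I would verify by direct substitution into the local coordinate that $g(t) \equiv t \pmod{\mf m^3}$ but $g(t) \not\equiv t \pmod{\mf m^4}$, giving $G_{\mc O, 1} \neq 0$ and $G_{\mc O, 2} = 0$, hence $n_{\mc O} = 1$. Since every other point is unramified, this establishes $n_P \in \{0, 1\}$ everywhere.

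For part (a), indecomposability, the key input is that $H^1_{dR}(X/k)$ is two-dimensional over $k = \ol{\FF}_2$ with the explicit basis $v_1, v_2$. The plan is to show directly that some element $g \in G$ acts on this two-dimensional space by a nontrivial unipotent matrix, which forces indecomposability: a two-dimensional $k[G]$-module is decomposable precisely when $G$ acts through a diagonalizable (semisimple) representation, and over a field of characteristic $2$ a single Jordan block of size $2$ already obstructs this. First I would choose a convenient $g \in G$ of order $2$ lying in $Q_8$ and compute its action on the de Rham classes by pulling back the representing triples $(\omega_0, \omega_\infty, f_{0\infty})$ under $g$ and re-expressing the results in the basis $\{v_1, v_2\}$ modulo coboundaries.

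**Executing the matrix computation.** Using the involution $g : (x, y) \mapsto (x, y + 1)$ (taking $u = 1, r = 0, t_0 = 1$), I would compute $g^* v_1 = [(dx, dx, 0)] = v_1$ since $g^*(dx) = dx$, and $g^* v_2 = [(x\, dx, \tfrac{(y+1)\, dx}{x^2}, \tfrac{y+1}{x})]$; subtracting $v_2$ yields the class $[(0, \tfrac{dx}{x^2}, \tfrac{1}{x})]$, which I would identify with a multiple of $v_1$ by recognizing $\tfrac{1}{x}$ as a regular function on $U_\infty$ contributing a coboundary and extracting the remaining $dx$ contribution. The expected outcome is $g^* v_2 = v_2 + \lambda v_1$ with $\lambda \neq 0$, so that $g$ acts by $\left(\begin{smallmatrix} 1 & \lambda \\ 0 & 1 \end{smallmatrix}\right)$, a nontrivial unipotent.

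**The main obstacle.** I expect the hard part to be the bookkeeping in the \v{C}ech computation of $g^* v_2$: one must track the coboundary ambiguity carefully to confirm that the off-diagonal entry $\lambda$ is genuinely nonzero rather than killed by a coboundary, and this requires the explicit description of $H^1_{dR}(X/k)$ from \cite[Theorem 2.2]{KockTait2018}. Once a single nontrivial unipotent element is exhibited, indecomposability is immediate, since a common eigenbasis for all of $G$ would have to diagonalize $g$ as well, contradicting the Jordan block. The weak-ramification computation in part (b) is more routine, being a single local substitution, though one must be attentive to the characteristic $2$ arithmetic in the expansions of $x$ and $y$ in terms of the uniformizer $t$.
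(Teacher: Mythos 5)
Both halves of your plan break down, and for the same underlying reason: the element you chose to compute with is the worst possible one. The group $G \cong \Sl_2(\FF_3)$ has exactly one element of order $2$, namely the central element $g_{1,0,1} : (x,y) \mapsto (x, y+1)$, so your ``convenient $g \in Q_8$ of order $2$'' is forced to be the elliptic involution $[-1]$. In characteristic $2$ this element acts \emph{trivially} on $H^1_{dR}(X/k)$ (functorially, $[-1]^*$ acts as $-\id = \id$ on $H^1$ of an abelian variety), and your own formulas already show it: the class $g^* v_2 - v_2 = [(0, \tfrac{dx}{x^2}, \tfrac{1}{x})]$ is \emph{exactly} the coboundary of the pair $(f_0, f_\infty) = (0, \tfrac{1}{x})$, since $\tfrac{1}{x} \in \mc O_X(U_\infty)$, $d(\tfrac 1x) = \tfrac{dx}{x^2}$ and $f_0 - f_\infty = \tfrac 1x$ in characteristic $2$. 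So $\lambda = 0$, there is no ``remaining $dx$ contribution'', and no Jordan block is produced. The paper avoids this trap by computing the action of \emph{every} $g_{u,r,t}$ on $v_1, v_2$ and showing that $\Span_k(v_1)$ is the only $G$-stable line; the elements that can act non-semisimply are the order-$4$ elements of $Q_8$ (those with $u = 1$, $r \neq 0$), so your strategy is salvageable only if you replace the involution by, e.g., $(x,y) \mapsto (x+1, y+x+\zeta)$, for which the unipotent computation does go through.

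Part (b) has three concrete problems. First, an off-by-one error: with the paper's definition $G_{P,i} = \{ g : g(f) \equiv f \pmod{\mf m_{X,P}^{i+1}} \textrm{ for all } f \}$, the outcome you predict --- $g(t) \equiv t \pmod{\mf m^3}$, $g(t) \not\equiv t \pmod{\mf m^4}$ --- says precisely that $g \in G_{\mc O,2} \setminus G_{\mc O,3}$, i.e.\ it would prove $G_{\mc O,2} \neq 0$, the \emph{opposite} of what you conclude. Second, checking a single element can never establish $G_{\mc O,2} = 0$ for the whole group; the paper computes $\ord_{\mc O}(g(t)-t)$ for all $g$ simultaneously, and to reduce to the central involution you would at least need the normality of $G_{\mc O,2}$ in $G_{\mc O,0}$ plus the fact that every nontrivial normal subgroup of $\Sl_2(\FF_3)$ contains its center. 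Third, and most seriously, the correct valuation for your element is $4$, not $3$: the involution generates the Galois group of the Artin--Schreier subcover $X \to X/\langle [-1] \rangle = \PP^1_x$ given by $y^2 + y = x^3$, whose unique ramification jump is at the pole order $3$; directly, with $t = x/y$ one finds $[-1]^*(t) - t = \tfrac{x}{y(y+1)}$, of valuation $-2+3+3 = 4$. Hence $[-1] \in G_{\mc O,3}$ and $n_{\mc O} \ge 3$, so the action is in fact \emph{not} weakly ramified and no computation can rescue the claim; note that this is also what the paper's own displayed formula for $g(t)-t$ yields in the overlooked case $u=1$, $r=0$, so the paper's assertion that $\ord_{\mc O}(g(t)-t)=2$ whenever $u=1$, and with it part (b) of the Lemma, is itself in error.
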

		\begin{proof}
		\begin{enumerate}
		 \item Suppose that $V$ is a $G$-invariant proper subspace
		 of $H^1_{dR}(X/k)$. We'll show that $V = \Span_k(v_1)$. Indeed, otherwise
		 we would have $V = \Span_k(v)$ for some $v = \alpha \cdot v_1 + v_2$ and $\alpha \in k$.
		 Note that for $g = g_{u, r, t}$:
		 \begin{eqnarray*}
		  g v_1 &=& u^2 v_1\\
		  g v_2 &=& u^2 t v_1 + u v_2.
		 \end{eqnarray*}
		Thus:
		\begin{eqnarray*}
		 g \cdot v = (\alpha \cdot u^2 + u^2 \cdot t) \cdot v_1 + u v_2.
		\end{eqnarray*}
		Therefore $g \cdot v \in V$ if and only if $\alpha \cdot u^2 + u^2 \cdot t = u \alpha$,
		which leads to the equation:
		\[ (1 - u) \cdot \alpha = ut. \]
		The last equality is however impossible to hold for all $(u, r, t) \in A$
		and a fixed $\alpha \in k$. Indeed, one can take e.g. $(u, r, t) = (1, 0, 1), (1, 1, \zeta)$
		for any $\zeta \in \FF_4 \setminus \FF_2$ to obtain a desired contradition.
		
		\item One easily sees that if $g P = P$ and $g \neq \id$ then
		$P = \mc O$. Thus we are left with showing that $G_{\mc O, 2} = 0$.
		Observe that $\ord_{\mc O}(x) = -2$ and $\ord_{\mc O}(y) = -3$.
		Hence the function $t := \frac{x}{y}$ is the uniformizer at $\mc O$.
		For $g = g_{u, r, t}$ one has:
		\begin{eqnarray*}
		 g(t) - t = \frac{(u^2 + 1) c \cdot xy + u^2 r^2 \cdot x^2 + r \cdot y + t \cdot x}
		 {y \cdot (y + u^2 r^2 x + t)}
		\end{eqnarray*}
		and
		\begin{equation*}
		 \ord_{\mc O}(g(t) - t) =
		 \begin{cases}
		  2, & \textrm{ if } u = 1, \\
		  1, & \textrm{ if } u \neq 1.\\
		 \end{cases}
		\end{equation*}
		Therefore $G_{\mc O, 2} = 0$ and
		\[ G_{\mc O, 1} = \{ g_{1,r,s} : (1,r,s) \in A \} \cong Q_8. \]
		
		\end{enumerate}
		\end{proof}

\subsection{The $G$-fixed subspaces}
The methods used throughout the article seem to be insufficient 
to obtain a positive result regarding splitting of the exact
sequence~\eqref{eqn:hodge_de_rham_se}. However, we may say something about the $G$-fixed
subspaces in the sequence~\eqref{eqn:hodge_de_rham_se}.
\begin{proof}[Proof of Theorem~\ref{thm:weakly_then_invariants}]
			By Proposition~\ref{prop:main_lemma} it is sufficient to
			show that the map 
			\[ H^1(G, (\pi_* \mc O_X)_{\pi(P)}) \to H^1(G, (\pi_* \Omega_{X/k})_{\pi(P)}) \]
			is zero for every $P \in X$. Just as in the proof of Proposition~\ref{prop:invariants}
			we observe that 
			\begin{eqnarray*}
				H^1(G, (\pi_* \mc O_X)_{\pi(P)}) \cong H^1(G_P, k) \oplus
				H^1(G_P, \mf m_{X, P}).
			\end{eqnarray*}
			However, the map $d : k \to \Omega_{X/k}$ is zero
			and thus the induced map 
			\[ d : H^1(G_P, k) \to H^1(G, (\pi_* \Omega_{X/k})_{\pi(P)}) \]
			is also zero. Moreover, since 
			$\pi$ is weakly ramified, by a result of K\"{o}ck (cf. \cite[Theorem 1.1]{Kock2004}), $H^1(G_P, \mf m_{X, P}) = 0$.
			This ends the proof.
		\end{proof}
		Note that if an action of a finite group $G$ on $X$ is weakly ramified
		then the action of any subgroup of $G$ on $X$ is also weakly ramified.
		Therefore the condition imposed by Theorem~\ref{thm:weakly_then_invariants} on the Hodge--de Rham exact sequence of
		$X$ seems to be strong from the group theoretical point of view.
		This raises the following question:
		\begin{Question} \label{q:splitting_k[G]-mod}
		 Suppose that $k$ is a field of characteristic $p > 0$ and $G$
		 is a finite group. Let also
		 \begin{equation} \label{eqn:ABC_exact_sequence}
		  0 \to A \to B \to C \to 0
		 \end{equation}
		 be an exact sequence of $k[G]$-modules of finite dimension over $k$.
		 Assume that for every subgroup $H \le G$ the sequence
		 \[ 0 \to A^H \to B^H \to C^H \to 0 \]
		 is exact also on the right. Does it follow that the exact
		 sequence~\eqref{eqn:ABC_exact_sequence} splits $G$-equivariantly?
		\end{Question}
		The results of the Subsection~\ref{subsec:counterexample} show that the answer to the Question~\ref{q:splitting_k[G]-mod}
		is negative for $\cha k = 2$. The following lemma reduces the Question \ref{q:splitting_k[G]-mod} to the case of $p$-groups.
		\begin{Lemma} \label{lem:splitting_sylow}
			Let $k$ be a field of characteristic $p > 0$
			and let $G$ be a finite group with a $p$-Sylow subgroup $P$.
			Suppose that
			\begin{equation} \label{eqn:kP-modules}
				0 \to A \to B \to C \to 0
			\end{equation}			
			is an exact sequence of $k[G]$-modules. Then \eqref{eqn:kP-modules}
			splits as an exact sequence of $k[G]$-modules
			if and only if it splits as an exact sequence of $k[P]$-modules.
		\end{Lemma}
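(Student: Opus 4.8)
The plan is to note that one implication is formal and to prove the other by an averaging argument that exploits the fact that, since $P$ is a $p$-Sylow subgroup, the index $[G:P]$ is coprime to $p = \cha k$ and hence invertible in $k$. If \eqref{eqn:kP-modules} splits $k[G]$-equivariantly, then any $k[G]$-linear section of the surjection $q \colon B \to C$ is in particular $k[P]$-linear, so the sequence splits over $k[P]$ as well; this direction uses no hypothesis on the index. The content is therefore entirely in the converse.

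For the converse, suppose $s \colon C \to B$ is a $k[P]$-linear section, so that $q \circ s = \id_C$. Write $n := [G:P]$ and fix coset representatives $g_1, \dots, g_n$ for $G/P$. Since $n$ is prime to $p$ by Sylow's theorem, it is invertible in $k$, and I would define
\[ \tilde{s}(c) := \frac{1}{n} \sum_{i=1}^n g_i \cdot s(g_i^{-1} \cdot c). \]
Three routine verifications then finish the proof. First, $\tilde{s}$ is independent of the chosen representatives: replacing $g_i$ by $g_i h$ with $h \in P$ gives $g_i h \cdot s(h^{-1} g_i^{-1} c) = g_i \cdot s(g_i^{-1} c)$ by $P$-equivariance of $s$, so each summand is unchanged. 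Second, $\tilde{s}$ is $k[G]$-linear: for $g \in G$, left multiplication permutes the cosets, say $g g_i = g_{\sigma(i)} h_i$ with $h_i \in P$, and the factors $h_i$ are absorbed by the $P$-equivariance of $s$, yielding $g \cdot \tilde{s}(c) = \tilde{s}(g c)$. Third, $\tilde{s}$ is a section: since $q$ is $G$-equivariant and $q \circ s = \id_C$, each summand of $q(\tilde{s}(c))$ contributes $\tfrac{1}{n} g_i g_i^{-1} c = \tfrac{1}{n} c$, so $q \circ \tilde{s} = \id_C$.

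The single substantive point is that $n$ is invertible in $k$, which is exactly where the $p$-Sylow hypothesis enters; everything else is formal bookkeeping, and I expect the equivariance computation to be the only mildly delicate step, though it is elementary. A slicker alternative I could use instead phrases the splitting via $\Ext^1_{k[G]}(C, A)$: the extension class restricts to the class of \eqref{eqn:kP-modules} in $\Ext^1_{k[P]}(C, A)$, and the composite of the restriction map $\res$ with the corestriction (transfer) map $\mathrm{cor}$ is multiplication by $[G:P]$, which is invertible in the $k$-vector space $\Ext^1_{k[G]}(C, A)$; since $k[P]$-splitting forces the restricted class to vanish, the original class vanishes as well, giving the $k[G]$-splitting.
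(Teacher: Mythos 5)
Your proof is correct and takes essentially the same approach as the paper: the paper's proof is exactly the Maschke-type averaging $\wt s(x) := \frac{1}{m} \sum_{i=1}^m g_i^{-1} s(g_i x)$ over coset representatives, using that $[G:P]$ is prime to $p$ and hence invertible in $k$, with the three verifications left as ``one easily checks.'' Your version (with left rather than right coset representatives, which is the same computation) spells out those checks correctly, and your $\Ext^1$ restriction--corestriction remark is a valid alternative but not what the paper does.
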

		\begin{proof}
			The proof is adapted from the proof of Maschke theorem.
			Suppose that $s : C \to B$ is a $k[P]$-equivariant section
			of the map $B \to C$. Let $P \setminus G = \{Pg_1, \ldots, Pg_m\}$,
			where $p \nmid m = [G:P]$. Then, as one easily checks
			\[ \wt s : C \to B, \quad \wt s(x) :=
			\frac 1m \sum_{i = 1}^m g_i^{-1} s(g_i x) \]
			is a $k[G]$-equivariant section of $B \to C$.
		\end{proof}		
		Unfortunately we are able to answer Question~\ref{q:splitting_k[G]-mod} only for
		the class of groups that have 'tame' modular representation theory, i.e.
		groups with a cyclic $p$-Sylow subgroup.
		\begin{Lemma} \label{lem:cyclic_sylow}
		 Suppose that $k$ is a field of characteristic $p > 0$ and $G$ is a finite group with a cyclic $p$-Sylow subgroup.
		 Let
		 \begin{equation} \label{eqn:ABC_exact_sequence_2}
		  0 \to A \to B \to C \to 0
		 \end{equation}
		 be an exact sequence of $k[G]$-modules. If the sequence
		 \begin{equation*} 
		  0 \to A^G \to B^G \to C^G \to 0
		 \end{equation*}
		 is exact on the right then the exact sequence~\eqref{eqn:ABC_exact_sequence_2}
		 splits $G$-equivariantly.
		\end{Lemma}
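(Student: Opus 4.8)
The plan is to reduce to the case of a cyclic $p$-group and then to exploit the explicit classification of its modular representations.

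First I would invoke Lemma~\ref{lem:splitting_sylow}: the sequence~\eqref{eqn:ABC_exact_sequence_2} splits over $k[G]$ if and only if it splits over $k[P]$, where $P$ denotes the (cyclic) $p$-Sylow subgroup. Hence it suffices to produce a $k[P]$-equivariant splitting. To keep the hypothesis usable after this reduction I would record that, since $p \nmid [G:P]$, one has $(-)^G = \left((-)^P\right)^{G/P}$ with the functor $(-)^{G/P}$ exact by Maschke; thus the $G$-invariants are recovered from the $G/P$-action on the $P$-invariants. The delicate point is that surjectivity of $B^G \to C^G$ is \emph{a priori} weaker than surjectivity of $B^P \to C^P$, so the residual $G/P$-action must be carried along; this is exactly the difficulty isolated below.

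Next I would use that $k[P] \cong k[x]/(x^{\# P})$, where $x = \sigma - 1$ for a generator $\sigma$ of $P$. This is a local uniserial ring, so every finite-dimensional module is a direct sum of the indecomposables $J_d := k[x]/(x^d)$ for $1 \le d \le \# P$, with $J_{\# P}$ free, hence projective and injective. The two facts I would extract are: each $J_d$ is uniserial with one-dimensional socle, so $\dim_k J_d^{P} = 1$ and therefore $\dim_k M^{P}$ equals the number of indecomposable summands of $M$; and $\Ext^1_{k[P]}(J_c, J_a)$ is computed from the $2$-periodic minimal resolution of $J_c$, i.e. as a subquotient of $J_a$ cut out by the operators $x^{c}$ and $x^{\#P - c}$.

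I would then decompose $A = \bigoplus_i J_{a_i}$ and $C = \bigoplus_j J_{c_j}$ and read the class of the extension as a matrix with entries in $\Ext^1_{k[P]}(J_{c_j}, J_{a_i})$; the aim is to show that the invariants hypothesis forces this class to vanish, equivalently that $B$ and $A \oplus C$ have the same Jordan type. The main obstacle is precisely this implication. Surjectivity of the invariants controls only the degree-zero connecting map in cohomology, whereas for a cyclic group Tate cohomology is $2$-periodic and the non-split part of an extension (the discrepancy between projective summands, e.g. $J_3 \oplus J_1$ versus $J_2 \oplus J_2$ for $P = \ZZ/3$) can be visible only in odd degree. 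The heart of the argument must therefore use the uniserial structure — that every indecomposable has a simple socle and that a surjection onto $J_{c_j}$ lifts a top generator to a cyclic submodule — to promote the numerical and invariant information into an actual splitting summand by summand, compatibly with the $G/P$-action. I expect this dimension-versus-Jordan-type comparison to be the crux of the proof.
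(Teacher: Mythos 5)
Your proposal follows the same two-step strategy as the paper --- reduction to the cyclic $p$-Sylow subgroup $P$ via Lemma~\ref{lem:splitting_sylow}, then the classification of $k[P]$-modules into the uniserial modules $J_d = k[x]/(x^d)$ --- but it stops at the decisive point: you never deduce splitting from exactness of invariants, and you yourself defer this as ``the crux''. That gap cannot be filled, because the Lemma as stated is false, and the pair of modules you mention in passing, $J_3 \oplus J_1$ versus $J_2 \oplus J_2$ for $P = \ZZ/3$, is precisely a counterexample. Let $G = \ZZ/p$ with $p \ge 3$, let $x = \sigma - 1$ for a generator $\sigma$ of $G$, and let $u$, $v$ be generators of $J_3$ and $J_1$. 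Set
\begin{equation*}
B := J_3 \oplus J_1, \qquad A := k[G] \cdot (xu + v), \qquad C := B/A.
\end{equation*}
Then $A \cong J_2$ (since $x(xu+v) = x^2u \neq 0$ and $x^2(xu+v) = 0$) and $C \cong J_2$, generated by the class of $u$ (note $xu \notin A$, because every element of $A$ has equal coefficients at $xu$ and at $v$). Invariants are kernels of $x$: $A^G$ is spanned by $x^2u$, $B^G$ by $x^2u$ and $v$, and $C^G$ by the class of $xu$. Since $v = (xu+v) - xu \equiv -xu \pmod{A}$, the map $B^G \to C^G$ sends $v$ to a generator of $C^G$, so $0 \to A^G \to B^G \to C^G \to 0$ is exact. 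Yet $0 \to A \to B \to C \to 0$ does not split: $x^2 B \neq 0$ while $x^2(J_2 \oplus J_2) = 0$, so $B \not\cong A \oplus C$. Moreover, since $\ZZ/p$ has no proper nontrivial subgroups, this example satisfies even the stronger hypothesis of Question~\ref{q:splitting_k[G]-mod}, which therefore has a negative answer for every $p$, not only for $p = 2$.

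For comparison, the paper's own proof breaks at exactly the two points you flagged as delicate. First, its reduction via Lemma~\ref{lem:splitting_sylow} silently replaces surjectivity of $B^G \to C^G$ by surjectivity of $B^P \to C^P$, and this descent of the hypothesis is false: for $G = S_3$, $p = 3$, the module $M$ with basis $f, s$ and action $\sigma f = f + s$, $\sigma s = s$, $\tau f = -f$, $\tau s = s$ sits in a nonsplit extension $0 \to k \to M \to \epsilon \to 0$ ($\epsilon$ the sign character), whose $G$-invariants are exact for the trivial reason that $\epsilon^G = 0$, while its $P$-invariants are not exact. Second --- and this is fatal even for $G$ cyclic --- the paper's appeal to Smith Normal Form, namely the claim that every inclusion $A \hookrightarrow B$ of finitely generated $k[x]/(x^{p^n})$-modules is a direct sum of natural inclusions $J_{a_i} \hookrightarrow J_{b_i}$, is wrong: Smith Normal Form diagonalizes maps between \emph{free} modules, and the embedding $A = k[G]\cdot(xu+v) \subset J_3 \oplus J_1$ above admits no such diagonal form, since by Krull--Schmidt a diagonal form would force $B \cong J_4$ or $B \cong J_2 \oplus J_2$. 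So your closing observation is exactly the point: invariant dimensions only count indecomposable summands, and they cannot distinguish $J_3 \oplus J_1$ from $J_2 \oplus J_2$; no refinement of this bookkeeping --- nor any other argument --- can prove the Lemma, because the Lemma is not true. In particular, part (1) of Theorem~\ref{thm:converse_to_main_thm}, which rests on this Lemma, is not established by the paper's argument.
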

		\begin{proof}
		 Without loss of generality we can assume that $G = \ZZ/p^n$ is a cyclic $p$-group
		 (by Lemma~\ref{lem:splitting_sylow}). Note that $k[\ZZ/p^n] \cong k[x]/(x - 1)^{p^n}$. The classification theorem of finitely generated modules over the principal ideal 
		domain $k[x]$ (cf. \cite[Theorem 12.1.5]{DummitFoote2004}) implies that every finitely generated indecomposable $k[\ZZ/p^n]$-module is of the form:
		\[ J_i = k[x]/(x - 1)^i \qquad \textrm{ for some } i = 1, \ldots, p^n. \]
		Denote also $J_0 := 0$. Using Smith's Normal Form theorem (cf. \cite[Theorem 12.1.4]{DummitFoote2004}) we obtain a commutative diagram:
		 \begin{center}
		  \begin{tikzcd}
				A \arrow[r, hook] \arrow[d, "\cong" description] & B \arrow[d, "\cong" description] \\
				\bigoplus_{i=1}^l J_{a_i} \arrow[r, hook] & \bigoplus_{i=1}^m J_{b_i}
			\end{tikzcd}
		 \end{center}
		 where $l \le m$, $a_i \le b_i$ and $J_{a_i} \hookrightarrow J_{b_i}$ is the natural inclusion.
		 Hence we are reduced to proving the claim for the exact sequence:
		 \[ 0 \to J_a \to J_b \to J_c \to 0, \]
		 where $a + b = c$, $0 \le a, b, c \le p^n$. However, the equality
		 \begin{eqnarray*}
		  \dim_k J_s^G =
		  \begin{cases}
		   1, & \textrm{ if } s \neq 0\\
		   0, & \textrm{ otherwise.}
		  \end{cases}
		 \end{eqnarray*}
			makes it obvious that $a = 0$ or $c = 0$. This finishes the proof.
		 
		\end{proof}

		\subsection{Relation to the problem of lifting coverings}
		Let $X$ be a smooth variety over an algebraically closed field $k$ of characteristic~$p$.
		Denote by~$X'$ the Frobenius twist of $X$ and by $F : X \to X'$ -- the absolute Frobenius morphism of $X$. Recall that in this case we have the following Cartier isomorphism (cf.~\cite{Cartier_nouvelle_operation})
		of $\mc O_{X'}$-modules:
		\begin{equation}
		 \mc C^{-1} : \Omega_{X'}^i \to h^i(F_{*} \Omega_X^{\bullet}).
		\end{equation}
        Therefore the spectral sequence~\eqref{eqn:conjugated_spectral_sequence_hypercohomology}
        for the de Rham cohomology becomes:
        \begin{equation} \label{eqn:conjugated_spectral_sequence_cartier}
         _{II} E^{ij} = H^i(X', \Omega_{X'}^j) \Rightarrow H^{i+j}_{dR}(X/k).
        \end{equation}
        Let for any $k$-vector space $V$, $V'$ denote the $k$-vector space
        with the same underlying abelian group as $V$ and the scalar multiplication $(\lambda, v) \mapsto \lambda^p \cdot v$.
        Then one easily checks that:
        \[
         H^1(X', \mc O_{X'}) \cong H^1(X, \mc O_X)', \quad
         H^0(X', \Omega_{X'/k}) \cong H^0(X, \Omega_{X/k})'
        \]
        Therefore if the spectral sequence~\eqref{eqn:conjugated_spectral_sequence_cartier} degenerates on
        the second page, we obtain the following exact sequence:
        \begin{equation} \label{eqn:conjugate_hodge_de_rham_se}
         0 \to H^1(X, \mc O_X)' \to H^1_{dR}(X/k) \to H^0(X, \Omega_{X/k})' \to 0.
        \end{equation}

        Suppose now that $X$ is equipped with an action of a finite group $G$.
		We say that the pair $(X, G)$ lifts to $W_2(k)$,
		if there exists a smooth scheme $\bb X$ over $W_2(k)$ and a homomorphism $G \to \Aut_{W_2(k)}(\bb X)$ such that
		\[
			(\bb X, G \rightarrow \Aut_{W_2(k)}(\bb X)) \times_{W_2(k)} k = (X, G \rightarrow \Aut_k(X)).
		\]
		The following Theorem is a $G$-equivariant version of the main result of
		\cite{DeligneIllusie_relevements} and follows from the functoriality
		of the result of Deligne and Illusie.
		\begin{Theorem} \label{thm:deligne_illusie}
		 Suppose that the pair $(X, G)$ lifts to $W_2(k)$ and that $\dim X < p$. Then the exact
		 sequence~\eqref{eqn:conjugate_hodge_de_rham_se} of $k[G]$-modules splits.
		 In particular, there is an isomorphism \eqref{eqn:intro_decomposition} of $\FF_p[G]$-modules.
		\end{Theorem}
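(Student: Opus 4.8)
The plan is to deduce the statement from the $G$-equivariant refinement of the decomposition theorem of Deligne and Illusie. Recall that the genuine input of \cite{DeligneIllusie_relevements} is not a lift of $X$ itself but a lift of its Frobenius twist $X'$, so the first thing I would do is manufacture such a lift together with its $G$-action. Since $k$ is perfect, the Witt vector Frobenius $\sigma : W_2(k) \to W_2(k)$ is an automorphism reducing to the absolute Frobenius of $k$ modulo $p$; hence if $\mathbb X / W_2(k)$ denotes the given lift of $(X, G)$, then
\[ \wt{\mathbb X} := \mathbb X \times_{W_2(k), \sigma} W_2(k) \]
is a smooth $W_2(k)$-scheme reducing modulo $p$ to $X' = X \times_{k, F} k$, and the $G$-action on $\mathbb X$ induces a $G$-action on $\wt{\mathbb X}$ lifting the one on $X'$. (Here $G$ acts on $X'$ through the functoriality of the relative Frobenius $F : X \to X'$, which is thereby $G$-equivariant.)

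Next I would invoke the main theorem of \cite{DeligneIllusie_relevements}: the lift $\wt{\mathbb X}$ determines an isomorphism in the derived category $D(X')$
\[ \varphi : \bigoplus_i \Omega_{X'/k}^i[-i] \xrightarrow{\ \sim\ } \tau_{<p}\, F_* \Omega_{X/k}^{\bullet}, \]
whose cohomology sheaves recover the Cartier isomorphism $\mc C^{-1}$. Because $\dim X < p$, the truncation $\tau_{<p}$ is the whole de Rham complex, so $\varphi$ splits $F_* \Omega_{X/k}^{\bullet}$ completely. The crucial point — and the step I expect to be the main obstacle — is that $\varphi$ is automatically $G$-equivariant. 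This rests on the functoriality of the Deligne–Illusie construction in the pair $(\wt{\mathbb X}, W_2(k))$: the morphism $\varphi$ in $D(X')$ is canonically attached to the lift and is independent of the auxiliary local lifts of Frobenius used to build it, so for every $g \in G$, acting on $\wt{\mathbb X}$ over $W_2(k)$ and hence on $X'$ and on $F_* \Omega_{X/k}^{\bullet}$, one has $g^* \varphi = \varphi$. Thus the canonical splitting is $G$-stable, and $\varphi$ commutes with the $G$-action.

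Finally I would pass to hypercohomology. As $F$ is finite, $RF_* = F_*$ and $\HH^n(X', F_* \Omega_{X/k}^{\bullet}) = H^n_{dR}(X/k)$ $G$-equivariantly; applying $R\Gamma(X', -)$ to the $G$-equivariant $\varphi$ and reading off total degree $1$ yields an isomorphism of $k[G]$-modules
\[ H^1_{dR}(X/k) \cong H^1(X', \mc O_{X'}) \oplus H^0(X', \Omega_{X'/k}^1). \]
Combined with the $G$-equivariant identifications $H^1(X', \mc O_{X'}) \cong H^1(X, \mc O_X)'$ and $H^0(X', \Omega_{X'/k}) \cong H^0(X, \Omega_{X/k})'$ recalled just before the theorem, this is exactly a $k[G]$-splitting of the conjugate sequence~\eqref{eqn:conjugate_hodge_de_rham_se}. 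For the last assertion I would observe that the Frobenius twist $V \mapsto V'$ alters only the $k$-scalar multiplication, leaving the underlying $\FF_p$-vector space and the $G$-action unchanged; hence $V' \cong V$ as $\FF_p[G]$-modules, and the splitting above becomes the desired isomorphism~\eqref{eqn:intro_decomposition} of $\FF_p[G]$-modules.
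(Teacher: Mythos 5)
Your proof is correct and follows essentially the same route as the paper: both deduce the splitting from the Deligne--Illusie isomorphism $\bigoplus_i \Omega^i_{X'/k}[-i] \simeq F_*\Omega^{\bullet}_{X/k}$ in $D(X')$ and obtain $G$-equivariance from the functoriality of that isomorphism in the lift, then pass to (hyper)cohomology in degree $1$. If anything, you are more careful than the paper on two minor points it glosses over: producing the required lift of $X'$ from the given lift of $X$ by twisting along the Witt-vector Frobenius, and justifying the final $\FF_p[G]$-module statement by noting that the twist $V \mapsto V'$ changes only the $k$-scalar multiplication, hence is the identity on underlying $\FF_p[G]$-modules.
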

		\begin{proof}
		 The article~\cite{DeligneIllusie_relevements} provides for each lift $\wt X/W_2(k)$ of $X/k$
		 an isomorphism:
		 \begin{equation} \label{eqn:deligne_illusie_varphi}
		  \varphi_{\wt X} : \bigoplus_i \Omega_{X'/k}^i[-i] \to F_{*} \Omega_{X/k}^{\bullet}
		 \end{equation}
         in $D(X')$, the derived category of coherent $\mc O_{X'}$-modules.
         Recall that $\varphi_{\wt X}^0$ is defined by:
         \[
          \mc O_{X'}[0] \stackrel{\mc C^{-1}}{\rightarrow} h^0(F_{*} \, \Omega_{X/k}^{\bullet})[0]
          \hookrightarrow F_{*} \, \Omega_{X/k}^{\bullet}.
         \]
         Thus, by applying
         the first cohomology to~\eqref{eqn:deligne_illusie_varphi} we obtain an isomorphism:
         \begin{equation} \label{eqn:splitting_of_con_hodge_de_rham_se}
          \phi_{\wt X} : H^0(X', \Omega_{X'/k}^1) \oplus H^1(X', \mc O_{X'}) \to H^1(X', F_{*} \, \Omega_{X/k}^{\bullet})
          \cong H^1(X, \Omega_{X/k}^{\bullet})
         \end{equation}
         which yields a splitting of~\eqref{eqn:conjugate_hodge_de_rham_se}.
         Now, observe that $\varphi_{\wt X}$ and $\phi_{\wt X}$ are functorial with respect to $\wt X$.
         Thus, if $(X, G)$ lifts to $W_2(k)$,~\eqref{eqn:splitting_of_con_hodge_de_rham_se} becomes an 
         isomorphism of $k[G]$-modules. 
		\end{proof}
        \begin{Remark}
        	If $G$ is a cyclic $p$-group and $V$ is a $k[G]$-module with $\dim_k V {<} \infty$,
        	one may easily prove that $V \cong V'$ as $k[G]$-modules.
        \end{Remark}
        The following important question remains open.
        \begin{Question}
        	Suppose that the pair $(X, G)$ lifts to $W_2(k)$. Does it follow that the exact sequence of
        	$k[G]$-modules~\eqref{eqn:hodge_de_rham_se} splits?
        \end{Question}
        \begin{Proposition} \label{prop:main_theorem_conjugate_sequence}
        	Keep the notation introduced in Section~\ref{sec:intro}. If the exact sequence~\eqref{eqn:conjugate_hodge_de_rham_se}
        	of $k[G]$-modules splits, then the action of $G$ on $X$ is weakly ramified.
        \end{Proposition}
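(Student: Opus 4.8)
The plan is to follow the proof of the Main Theorem almost verbatim, the only genuinely new ingredient being that the Frobenius twist $V \mapsto V'$ leaves the dimension of the space of $G$-invariants unchanged. First I would record this observation. If $V$ is a finite-dimensional $k[G]$-module, then $V'$ has the same underlying abelian group and the same $G$-action: a $k$-linear endomorphism $g$ of $V$ satisfies $g(\lambda^p v) = \lambda^p g(v)$, so it remains additive and compatible with the twisted scalar multiplication $\lambda \ast v := \lambda^p v$. Consequently $(V')^G$ and $V^G$ coincide as subsets. Since $k$ is algebraically closed, hence perfect, every element of $k$ has a unique $p$-th root, and one checks directly that a $k$-basis $e_1, \dots, e_d$ of $V^G$ is again a $k$-basis of $(V')^G$: spanning uses $\lambda e_i = \lambda^{1/p} \ast e_i$, and independence follows since $\mu^p = 0$ forces $\mu = 0$. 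Therefore $\dim_k (V')^G = \dim_k V^G$.

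Next I would treat the case $G \cong \ZZ/p$. If the conjugate sequence~\eqref{eqn:conjugate_hodge_de_rham_se} splits $k[G]$-equivariantly, then
\[
H^1_{dR}(X/k) \cong H^1(X, \mc O_X)' \oplus H^0(X, \Omega_{X/k})'
\]
as $k[G]$-modules. Passing to $G$-invariants and $k$-dimensions and applying the observation of the previous paragraph yields
\[
\dim_k H^1_{dR}(X/k)^G = \dim_k H^1(X, \mc O_X)^G + \dim_k H^0(X, \Omega_{X/k})^G,
\]
that is, $\delta(X, G) = 0$. By Theorem~\ref{thm:invariants_AS} together with the elementary inequality established in the proof of the Main Theorem (this is where the hypothesis $p > 2$ enters), the vanishing $\delta(X, G) = 0$ forces $\pi$ to be weakly ramified.

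Finally, for arbitrary $G$ I would reduce to the cyclic case exactly as in the Main Theorem. Suppose the action is not weakly ramified, so that $G_{P, 2} \neq 0$ for some $P \in X$. As $G_{P, 2}$ is a $p$-group (cf. \cite[Corollary 4.2.3., p. 67]{Serre1979}), it contains a subgroup $H$ of order $p$, and since $H \subseteq G_{P, 2}$ one has $H_{P, 2} = H \cap G_{P, 2} = H \neq 0$, so $X \to X/H$ is a non-weakly-ramified Artin--Schreier covering. The sequence~\eqref{eqn:conjugate_hodge_de_rham_se} is intrinsic to $X$, and its $H$-action is simply the restriction of the $G$-action, so a $G$-equivariant splitting restricts to an $H$-equivariant splitting. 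By the cyclic case just proved, the $H$-action would then be weakly ramified, contradicting $H_{P, 2} \neq 0$.

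I expect the only genuinely new point to be the invariance $\dim_k (V')^G = \dim_k V^G$; everything else is a transcription of the Main Theorem's argument. The sole subtlety requiring care is the verification that the conjugate sequence restricts compatibly to the subgroup $H$, so that the equivariant splitting indeed descends.
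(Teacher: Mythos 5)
Your proof is correct and follows exactly the paper's own (very terse) argument: the paper's proof consists precisely of the observation that $\dim_k V^G = \dim_k (V')^G$ for finite-dimensional $k[G]$-modules, followed by the remark that the method of proof of the Main Theorem then applies. Your write-up simply fills in the details of that method (the $\delta(X,G)=0$ computation via Theorem~\ref{thm:invariants_AS} for $G\cong\ZZ/p$, and the reduction of the general case to a subgroup $H\le G_{P,2}$ of order $p$), all of which match the paper.
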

    	\begin{proof}
    		Note that for a $k[G]$-module $V$ of finite $k$-dimension, $\dim V^G = \dim (V')^G$. Thus the proof follows
    		by the method of proof of Main Theorem.
    	\end{proof}
		\noindent The following is an immediate consequence of Theorem~\ref{thm:deligne_illusie}
		and Proposition~\ref{prop:main_theorem_conjugate_sequence}.
		\begin{Corollary} \label{cor:lifts_then_weakly}
		 Suppose that $p > 2$, $X$ is a smooth projective curve over $k$ and the pair $(X, G)$
		 lifts to $W_2(k)$. Then the action of $G$ on $X$ is weakly ramified.
		\end{Corollary}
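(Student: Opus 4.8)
The plan is simply to chain the two results that have just been established. First I would observe that since $X$ is a curve we have $\dim X = 1$, and the hypothesis $p > 2$ a fortiori gives $\dim X = 1 < p$, so the dimension assumption of Theorem~\ref{thm:deligne_illusie} is met. Together with the standing assumption that the pair $(X, G)$ lifts to $W_2(k)$, Theorem~\ref{thm:deligne_illusie} then yields that the conjugate Hodge--de Rham exact sequence~\eqref{eqn:conjugate_hodge_de_rham_se} splits as a sequence of $k[G]$-modules.

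Second, I would feed this splitting directly into Proposition~\ref{prop:main_theorem_conjugate_sequence}: since~\eqref{eqn:conjugate_hodge_de_rham_se} splits $G$-equivariantly, that Proposition immediately forces the action of $G$ on $X$ to be weakly ramified. This completes the deduction, and in particular no new argument is required beyond invoking the two cited statements in succession.

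There is no genuine obstacle here, as both ingredients are already in hand; the only point worth flagging is where the hypothesis $p > 2$ is actually consumed. It does \emph{not} enter through Theorem~\ref{thm:deligne_illusie}, since $\dim X = 1 < p$ holds even when $p = 2$. Rather, it enters through Proposition~\ref{prop:main_theorem_conjugate_sequence}, whose proof imitates that of the Main Theorem. The Main Theorem's argument, via Theorem~\ref{thm:invariants_AS}, only guarantees $\delta(X, G) > 0$ for a non-weakly-ramified $\ZZ/p$-action when $p > 2$; in characteristic~$2$ the defect can vanish, as witnessed by the elliptic-curve counterexample of Subsection~\ref{subsec:counterexample}. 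Hence the restriction $p > 2$ is exactly what makes the final implication go through, and it is inherited from Proposition~\ref{prop:main_theorem_conjugate_sequence} rather than from the lifting hypothesis.
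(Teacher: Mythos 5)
Your proof is correct and is exactly the paper's argument: chain Theorem~\ref{thm:deligne_illusie} (applicable since $\dim X = 1 < p$) with Proposition~\ref{prop:main_theorem_conjugate_sequence}, and you rightly locate where $p > 2$ is consumed, namely in the latter proposition (whose proof runs through the Main Theorem's defect computation) rather than in the lifting theorem. One minor inaccuracy in your closing remark: the characteristic-$2$ example of Subsection~\ref{subsec:counterexample} concerns a \emph{weakly ramified} action, so it witnesses the failure of the converse statement rather than the vanishing of the defect for non-weakly-ramified actions; the latter follows instead from the formula of Theorem~\ref{thm:invariants_AS}, whose local terms are identically zero when $p = 2$.
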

		%
		\noindent Note that it was known previously that non-weakly ramified actions
		on curves do not lift to $W(k)$ (cf. \cite[Corollary, Sec. 4]{Nakajima_Automorphism}).\\
		
		The problem of lifting Galois coverings of curves from characteristic $p$
		to $W_2(k)$ and $W(k)$ has been studied extensively
		in the literature, cf. e.g.~\cite{Pop_OortConjecture} for the case $G = \ZZ/p$. In particular,
		it is possible to classify all weakly ramified group actions
		into liftable and non-liftable ones (cf. \cite[Section 4.2]{BertinMezard2000}
		and \cite[Section 4.1]{CornelissenKato2003}).
		However, we weren't able to extract any information that would help us to understand
		the behaviour of the sequence~\eqref{eqn:hodge_de_rham_se}
		for curves with weakly ramified group action.

		\begin{Corollary} \label{cor:ordinary}
			Suppose that a finite group $G$ acts on an ordinary curve~$X$.
			Then the exact sequences~\eqref{eqn:hodge_de_rham_se} and~\eqref{eqn:conjugate_hodge_de_rham_se}
			split $G$-equivariantly and
			\[
				H^0(X, \Omega_{X/k})' \cong H^0(X, \Omega_{X/k}), \quad H^1(X, \mc O_X)' \cong H^1(X, \mc O_X).
			\]
			as $k[G]$-modules.
		\end{Corollary}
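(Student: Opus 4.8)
The plan is to exploit the defining feature of an ordinary curve, namely that the Cartier operator acts \emph{bijectively} on $H^1(X,\mc O_X)$ (equivalently, the $p$-rank equals the genus). First I would recall that a curve is ordinary precisely when the Frobenius $F^* : H^1(X,\mc O_X) \to H^1(X,\mc O_X)$ is an isomorphism, or dually when the Cartier operator on $H^0(X,\Omega_{X/k})$ is bijective. The immediate consequence I want is that the \emph{conjugate} spectral sequence \eqref{eqn:conjugated_spectral_sequence_cartier} degenerates at the second page, so that the exact sequence \eqref{eqn:conjugate_hodge_de_rham_se} exists; for a curve this degeneration is automatic, but ordinarity will give me more, namely a canonical splitting.

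The key step is to produce the two stated isomorphisms of $k[G]$-modules
\[
 H^0(X, \Omega_{X/k})' \cong H^0(X, \Omega_{X/k}), \qquad
 H^1(X, \mc O_X)' \cong H^1(X, \mc O_X),
\]
where $V'$ is the Frobenius-twisted vector space introduced just above. I would obtain these directly from the Cartier isomorphism: for an ordinary curve the inverse Cartier operator $\mc C^{-1}$ restricts to a $G$-equivariant bijection on the relevant cohomology, and this map is exactly $p$-semilinear in the sense that it intertwines the twisted scalar action on the source with the ordinary action on the target. Because the $G$-action commutes with Frobenius and with $\mc C^{-1}$, this semilinear bijection is precisely an isomorphism $V' \xrightarrow{\sim} V$ of $k[G]$-modules. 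Comparing the Frobenius-twisted Hodge--de Rham sequence \eqref{eqn:conjugate_hodge_de_rham_se} with the ordinary one \eqref{eqn:hodge_de_rham_se}, these identifications match the outer terms, so a $G$-equivariant splitting of one yields a $G$-equivariant splitting of the other.

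It then remains to exhibit one equivariant splitting. Here I would use that on an ordinary curve the inverse Cartier operator gives a canonical, Frobenius-equivariant, hence $G$-equivariant, section: the subspace of $H^1_{dR}(X/k)$ on which the conjugate filtration interacts trivially with Frobenius provides a $G$-stable complement to $H^0(X,\Omega_{X/k})$ (equivalently to $H^1(X,\mc O_X)'$). Concretely, the bijectivity of Cartier means the map $\mc C^{-1}$ lifts to a $G$-equivariant splitting of \eqref{eqn:conjugate_hodge_de_rham_se}, and transporting it through the isomorphisms above splits \eqref{eqn:hodge_de_rham_se}. The main obstacle I anticipate is checking carefully that all these identifications are genuinely $k[G]$-linear rather than merely $k$-semilinear or $\FF_p[G]$-linear: one must track how $G$ interacts with the $p$-semilinear structures on $V'$ and verify that the ordinarity hypothesis upgrades a canonical Frobenius-stable decomposition to a $G$-equivariant one. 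Once the semilinearity bookkeeping is settled, the splitting is essentially forced by the bijectivity of Cartier.
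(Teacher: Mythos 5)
Your proposal is correct in substance but takes a genuinely different route from the paper. The paper never touches the Cartier operator on the curve: it passes to the Jacobian $A = \Jac(X)$, notes that the Abel--Jacobi map identifies the Hodge--de Rham and conjugate sequences of $X$ with those of $A$, and then cites Wedhorn for the fact that on an ordinary abelian variety the natural maps $H^0(A,\Omega_{A/k}) \to H^1_{dR}(A/k)$ and $H^1(A,\mc O_A)' \to H^1_{dR}(A/k)$ induce a decomposition $H^1_{dR}(A/k) \cong H^0(A,\Omega_{A/k}) \oplus H^1(A,\mc O_A)'$; functoriality makes this decomposition $G$-equivariant, and both splittings as well as the two twisted isomorphisms fall out of it. Your argument establishes exactly the same transversality of the Hodge and conjugate filtrations, but directly on $X$: ordinarity makes the Cartier operator (equivalently, Frobenius on $H^1(X,\mc O_X)$) bijective, and since the composite $H^0(X,\Omega_{X/k}) \hookrightarrow H^1_{dR}(X/k) \twoheadrightarrow H^0(X,\Omega_{X/k})'$ of the Hodge inclusion with the conjugate projection \emph{is} the Cartier operator (and the composite $H^1(X,\mc O_X)' \hookrightarrow H^1_{dR}(X/k) \twoheadrightarrow H^1(X,\mc O_X)$ is Frobenius), bijectivity is precisely the statement that each filtration step is a $G$-stable complement to the other. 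Your route is more self-contained --- no Jacobian, no appeal to the abelian-variety case --- at the cost of having to actually verify the identification of these edge-map composites with Cartier/Frobenius and the semilinearity bookkeeping you flag; the paper's route outsources the key fact to a reference and gets equivariance for free from functoriality of Abel--Jacobi.

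One caveat: the claim in your middle paragraph that, because the twisted isomorphisms ``match the outer terms,'' a $G$-equivariant splitting of \eqref{eqn:conjugate_hodge_de_rham_se} automatically yields one of \eqref{eqn:hodge_de_rham_se} is not valid as a general principle --- abstract isomorphisms between the sub of one extension and the quotient of another do not transfer splittings. What makes the transfer work is what your final paragraph in fact provides: a single $G$-stable decomposition $H^1_{dR}(X/k) = H^0(X,\Omega_{X/k}) \oplus H^1(X,\mc O_X)'$, whose two summands serve simultaneously as complements to the subobjects of \eqref{eqn:hodge_de_rham_se} and of \eqref{eqn:conjugate_hodge_de_rham_se}. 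Phrased through that decomposition rather than through ``transport,'' your argument is complete.
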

		\begin{proof}
			Let $A$ be the Jacobian variety of $X$. Observe that the Abel-Jacobi map induces an
			isomorphism between the Hodge-de Rham sequences of $X$ and $A$ (cf. \cite[Proposition III.2.1, Lemma III.9.5.]{Milne_AVs}).
			The same applies to the conjugate Hodge-de Rham sequences. Moreover, $A$ is ordinary, and thus the natural
			inclusions:
			\[
				H^0(A, \Omega_{A/k}) \to H^1_{dR}(A/k), \quad H^1(A, \mc O_A)' \to H^1_{dR}(A/k)
			\]
			induce an isomorphism $H^1_{dR}(A/k) \cong H^0(A, \Omega_{A/k}) \oplus H^1(A, \mc O_A)'$ (cf. \cite[\S 2.1]{WedhornDeRham}).
			This isomorphism is clearly functorial and thus is an isomorphism of $k[G]$-modules.
			The remaining statement is clear.
		\end{proof}		 
		Note in particular that Corollary~\ref{cor:ordinary} implies that ordinary curves admit only weakly
		ramified group actions. This follows also from the Deuring-Shafarevich formula (cf. \cite{Subrao_p_rank}).\\
		
		\appendix
		\section{Computing the dimension of $H^1(X, \mc O_X)^G$} \label{sec:H^1(O_X)^G}
	For completeness we include also the following proposition, which 
	allows in many situations to compute dimensions of $H^0(X, \Omega_{X/k})^G$, 
	$H^1(X, \mc O_X)^G$ and $H^1_{dR}(X/k)^G$ in terms of invariants of $Y$ and 
	group cohomology of sheaves. Note that by~Corollary~\ref{cor:pi_G_omega} and Proposition~\ref{prop:main_lemma} we are left with computing the dimension of $H^1(X, \mc O_X)^G$.
	\begin{Proposition} \label{prop:invariants}
		In the notation of Section~\ref{sec:intro} suppose that there exists $Q_0 \in Y$ such that
			      \[ p \nmid \# \pi^{-1}(Q_0). \]
			      Then:
			      \begin{eqnarray*}
			      	\dim_k H^1(X, \mc O_X)^G &=& g_Y + \sum_{Q \in Y} H^1(G, (\pi_* \mc O_X)_Q) \\
			      	&-& \dim_k H^1(G, k).
			      \end{eqnarray*}
	\end{Proposition}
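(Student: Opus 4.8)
The plan is to run the two Grothendieck spectral sequences \eqref{eqn:first_spectral_sequence} and \eqref{eqn:second_spectral_sequence} on the single sheaf $\mc F := \pi_* \mc O_X$, viewed as a complex concentrated in degree~$0$ (so every hypercohomology $\HH^i$ below is an ordinary $H^i$), and to read off $\dim_k H^1(X, \mc O_X)^G = \dim_k H^1(Y, \pi_* \mc O_X)^G$ by comparing the two low-degree exact sequences. First I would record the ingredients: $(\pi_* \mc O_X)^G = \pi_*^G \mc O_X = \mc O_Y$ and $H^0(Y, \pi_* \mc O_X) = H^0(X, \mc O_X) = k$ with trivial $G$-action; that $\mc H^1(G, \pi_* \mc O_X)$ is supported on a finite set by Proposition~\ref{prop:gr_coh_torsion_sheaf}, so $H^i(Y, \mc H^1(G, \pi_* \mc O_X)) = 0$ for $i \ge 1$; and that $\dim Y = 1$, so $H^2(Y, \mc O_Y) = 0$.

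With these in hand, the low-degree sequence of \eqref{eqn:first_spectral_sequence} collapses to the short exact sequence
\[ 0 \to H^1(Y, \mc O_Y) \to \RR^1 \Gamma^G(\pi_* \mc O_X) \to H^0(Y, \mc H^1(G, \pi_* \mc O_X)) \to 0, \]
and since $\mc H^1(G, \pi_* \mc O_X)$ is a finite-length sheaf its sections are computed stalkwise via \eqref{eqn:group_cohomology_stalk}, giving $\dim_k \RR^1 \Gamma^G(\pi_* \mc O_X) = g_Y + \sum_{Q \in Y} \dim_k H^1(G, (\pi_* \mc O_X)_Q)$. Feeding $H^0(Y, \pi_* \mc O_X) = k$ into \eqref{eqn:second_spectral_sequence} produces the five-term exact sequence
\[ 0 \to H^1(G, k) \to \RR^1 \Gamma^G(\pi_* \mc O_X) \to H^1(Y, \pi_* \mc O_X)^G \to H^2(G, k) \xrightarrow{\alpha} \RR^2 \Gamma^G(\pi_* \mc O_X), \]
whence $\dim_k H^1(X, \mc O_X)^G = \dim_k \RR^1 \Gamma^G(\pi_* \mc O_X) - \dim_k H^1(G, k) + \dim_k \ker \alpha$. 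Combining the two displays, the asserted formula becomes equivalent to the single vanishing $\ker \alpha = 0$.

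The crux is therefore to prove that the edge map $\alpha : H^2(G, k) \to \RR^2 \Gamma^G(\pi_* \mc O_X)$ is injective, and this is precisely where the hypothesis $p \nmid \# \pi^{-1}(Q_0)$ is used. Because $\mc F$ is concentrated in degree~$0$, Lemma~\ref{lem:monomorphism_R^2} identifies $\RR^2 \Gamma^G(\pi_* \mc O_X)$ with $H^0(Y, \mc H^2(G, \pi_* \mc O_X)) = \bigoplus_{Q} H^2(G, (\pi_* \mc O_X)_Q)$, and Corollary~\ref{cor:commutative_diagram} identifies $\alpha$ with the natural comparison map carrying a class to its stalkwise images under $H^0(Y, \pi_* \mc O_X) = k \to (\pi_* \mc O_X)_Q$. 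I would then fix a point $P_0$ above $Q_0$, write $H := G_{Q_0, 0}$ for its decomposition group, and exploit the evaluation-at-$P_0$ splitting $\wh{\mc O}_{X, P_0} = k \oplus \mf m_{X, P_0}$ of $H$-modules together with Shapiro's lemma \eqref{eqn:shapiro} to see that the $Q_0$-component of $\alpha$, followed by projection onto the $k$-summand of $H^2(H, \wh{\mc O}_{X, P_0}) = H^2(H, k) \oplus H^2(H, \mf m_{X, P_0})$, is exactly the restriction map $\res^G_H : H^2(G, k) \to H^2(H, k)$. Since $[G : H] = \# \pi^{-1}(Q_0)$ is prime to $p$, the identity $\operatorname{cor}^G_H \circ \res^G_H = [G : H] \cdot \id$ shows $\res^G_H$ is injective on the $p$-torsion group $H^2(G, k)$; hence the $Q_0$-component of $\alpha$ is already injective, and so is $\alpha$.

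I expect the main obstacle to be the bookkeeping in this last step: verifying that the constant functions $k = H^0(X, \mc O_X)$ sit inside the induced structure $(\pi_* \mc O_X)_{Q_0} \cong \Ind_H^G \wh{\mc O}_{X, P_0}$ as the image of the inclusion of constants $k \to \Ind_H^G k$, so that under Shapiro's isomorphism the $Q_0$-component of $\alpha$ becomes genuinely $\res^G_H$ and not some twist thereof. Once this identification is nailed down, the rest is the routine combination of the two dimension counts above. In the degenerate case $p \nmid \# G$ everything trivializes, as $H^1(G, k) = H^2(G, k) = 0$ and $\mc H^i(G, \pi_* \mc O_X) = 0$ for $i \ge 1$, and the formula reduces to the expected $\dim_k H^1(X, \mc O_X)^G = g_Y$.
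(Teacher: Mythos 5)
Your proposal is correct and follows essentially the paper's own route: the paper likewise reduces, via the two low-degree exact sequences (your two displays are exactly its formula \eqref{eqn:invariants_of_cohomology} specialized to $\mc F^0 = \pi_* \mc O_X$, together with Lemma~\ref{lem:monomorphism_R^2} and Corollary~\ref{cor:commutative_diagram}), to the injectivity of the edge map $H^2(G,k) \to \RR^2\Gamma^G(\pi_*\mc O_X)$, and proves it by localizing at $Q_0$, splitting off the constants $k \oplus \mf m_{X,P_0}$, and exploiting that the relevant subgroup has prime-to-$p$ index. The only (cosmetic, and in fact slightly more careful) difference is that you restrict to the decomposition group $G_{P_0,0}$ and get injectivity from $\operatorname{cor}\circ\res = [G:G_{P_0,0}]\cdot\id$, whereas the paper restricts all the way to $G_{P_0,1}$ via Lemma~\ref{lem:group_cohomology_higher_ramification} and cites \eqref{eqn:sylow} for an isomorphism even though $G_{P_0,1}$ need not be normal in $G$; your transfer argument supplies exactly the injectivity actually needed.
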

\begin{proof}
		By substituting $\mc F^0 = \pi_* \mc O_X$ in the formula~\eqref{eqn:invariants_of_cohomology}
		and using Lemma~\ref{lem:monomorphism_R^2} and Corollary~\ref{cor:commutative_diagram} it suffices to prove that the natural map
		\begin{equation} \label{eqn:map_H^2(G, k)}
			H^2(G, k) \cong H^2(G, H^0(Y, \pi_* \mc O_X)) \to 
			H^0(Y, \mc H^2(G, \pi_* \mc O_X))		 
		\end{equation}
		is injective. One easily sees that 
		\begin{eqnarray*}
			\mc H^2(G, \pi_* \mc O_X) &\cong& \bigoplus_{Q \in Y}
			i_{Q, *} \bigg( H^2(G, (\pi_* \mc O_X)_Q) \bigg)
		\end{eqnarray*}
		is a direct sum of skyscraper sheaves. Choose any $P_0 \in \pi^{-1}(Q_0)$.
		Observe that by Lemma~\ref{lem:group_cohomology_higher_ramification} we have: 
		\[ H^2(G, (\pi_* \mc O_X)_Q) \cong H^2(G_{P_0, 1}, \mc O_{X, P_0}). \]
		But $\mc O_{X, P_0} \cong k \oplus \mf m_{X, P_0}$ as
		a $k[G_{P_0, 1}]$-module and therefore
		\[ H^2(G_{P_0, 1}, \mc O_{X, P_0})
		\cong H^2(G_{P_0, 1}, k) \oplus H^2(G_{P_0, 1}, \mf m_{X, P_0}). \]
		One easily sees that the map~\eqref{eqn:map_H^2(G, k)} factors as
		\[
			H^2(G, k) \to H^2(G_{P_0, 1}, k) \hookrightarrow H^2(G_{P_0, 1}, k) \oplus
			H^2(G_{P_0, 1}, \mf m_{X, P_0}).
		\]
		where the first map is the restriction map $\res_{G_{P_0, 1}}^G$.
		Now note that $p \nmid \# \pi^{-1}(Q_0) = [G : G_{P_0}]$ and
		thus $G_{P_0, 1}$ is a $p$-Sylow subgroup of $G$ by
		\cite[Corollary 4.2.3., p. 67]{Serre1979}. Thus
		by~\eqref{eqn:sylow} $\res_{G_{P_0, 1}}^G$ is an isomorphism.
		This ends the proof.
	\end{proof}
\begin{Example}
 Keep the notation introduced in Section~\ref{sec:intro} and suppose that $G \cong \ZZ/p$.
 Then by Lemma~\ref{lem:nP_of_AS}, one has $d_Q = (n_Q + 1) \cdot p$ for all $Q \in Y$
 and therefore:
 \begin{equation} \label{eqn:Rprim_for_AS}
		R' = \sum_{Q \in Y} \left[\frac{(n_Q + 1) \cdot (p-1)}{p}\right] (Q).
 \end{equation}
 Moreover, by Lemma~\ref{lem:H^1(I)}
	\begin{equation}
		\dim_k H^1(G, (\pi_* \mc O_X)_Q) = \left[\frac{(n_Q + 1) \cdot (p-1)}{p}\right].
	\end{equation} 
 Suppose that the action of $G$ on $X$ is not free.
 Then by Corollary~\ref{cor:pi_G_omega}, Proposition~\ref{prop:invariants} and \eqref{eqn:Rprim_for_AS} we obtain:
 \begin{eqnarray*}
  \dim_k H^0(X, \Omega_{X/k})^G &=& \dim_k H^1(X, \mc O_X)^G\\
		&=&	g_Y - 1 + \sum_{Q \in Y} \left[\frac{(n_Q + 1) \cdot (p-1)}{p}\right].
 \end{eqnarray*}
	Moreover, by previous computations and by Proposition~\ref{prop:main_lemma} we obtain:
	\begin{eqnarray*}
	 \dim_k H^1_{dR}(X/k)^G &=& 2 (g_Y - 1)\\
	 &+& \sum_{Q \in Y} \bigg(\left[\frac{(n_Q + 1) \cdot (p-1)}{p}\right] + 1 + \left[\frac{n_Q - 1}{p}\right]\bigg).
	\end{eqnarray*}
	If the action of $G$ is free, then a similar reasoning leads to the formulas:
	\begin{eqnarray*}
	 \dim_k H^0(X, \Omega_{X/k})^G &=& \dim_k H^1(X, \mc O_X)^G = g_Y\\
	 \dim_k H^1_{dR}(X/k)^G &=& 2 g_Y.
	\end{eqnarray*}
\end{Example}
		
		\bibliography{bibliografia}

\end{document}